\newtheorem{thm}{Theorem}[section]
\newtheorem{prop}[thm]{Proposition}
\newtheorem{rem}[thm]{\it Remark}
\newtheorem{defi}[thm]{Definition}
\newtheorem{lem}[thm]{Lemma}
\newtheorem{core}[thm]{Corollary}
\def\pf{\noindent{\it Proof.} }
\def\qed{\nopagebreak\hfill{\rule{4pt}{7pt}}
\medbreak}
\numberwithin{equation}{section}
\def\qed{\nopagebreak\hfill{\rule{4pt}{7pt}}
\medbreak}
\newlength{\boxedparwidth}
\hline \end{tabular} \end{center}}
\begin{document}

\begin{center}

 {\Large \bf New companions to the generations of the G\"ollnitz-Gordon identities}
\end{center}

\begin{center}
 {Thomas Y. He}$^{1}$ and
  {Alice X.H. Zhao}$^{2}$ \vskip 2mm

$^{1}$ School of Mathematical Sciences, Laurent Mathematics Center and V.C. \&  V.R. Key Lab, Sichuan Normal University, Chengdu 610066, P.R. China\\[6pt]

  $^{2}$ College of Science, Tianjin University of Technology, Tianjin 300384, P.R. China

   \vskip 2mm

    $^1$heyao@sicnu.edu.cn,  $^2$zhaoxh@email.tjut.edu.cn
\end{center}

\vskip 6mm   {\noindent \bf Abstract.} The G\"ollnitz-Gordon identities were found by G\"ollnitz and Gordon independently. In 1967, Andrews obtained a combinatorial generalization
of the  G\"ollnitz-Gordon identities, called the Andrews-G\"ollnitz-Gordon theorem. In 1980, Bressoud extended the Andrews-G\"ollnitz-Gordon theorem to even moduli, called the Bressoud-G\"ollnitz-Gordon theorem. Furthermore, Bressoud gave the generating functions for the generalizations of the G\"ollnitz-Gordon identities. In this article, we will give new companions to the generalizations of the G\"ollnitz-Gordon identities.

\noindent {\bf Keywords}: G\"ollnitz-Gordon identities, Bailey pairs, G\"ollnitz-Gordon marking

\noindent {\bf AMS Classifications}: 05A17, 11P81, 11P84

\section{Introduction}

A partition of a positive integer $n$ is a finite non-increasing sequence of positive integers $\lambda_1\geq\lambda_2\geq\cdots\geq\lambda_\ell>0$ such that $\lambda_1+\lambda_2+\cdots+\lambda_\ell=n.$ The $\lambda_s$ are called the parts of the partition.  We sometimes write $\lambda=(1^{f_1}2^{f_2}3^{f_3}\cdots)$, where $f_t(\lambda)$  denotes the number of parts equal to $t$ in $\lambda$. Let $\ell(\lambda)$ be the number of parts of $\lambda$ and  $|\lambda|$ be the sum of parts of $\lambda$.
The  G\"ollnitz-Gordon identities were found by G\"ollnitz \cite{Gollnitz-1960,Gollnitz-1967} and Gordon \cite{Gordon-1962,Gordon-1965} independently.

\begin{thm}[First G\"ollnitz-Gordon identity]\label{first-G-G-thm}
The number of partitions of $n$ into distinct nonconsecutive
parts with no even parts differing by exactly $2$ equals the number of partitions of $n$ into parts $\equiv1,4,7\pmod 8$.
\end{thm}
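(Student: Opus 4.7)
The plan is to reduce the theorem to the equivalent analytic identity
\begin{equation*}
\sum_{n\ge 0}\frac{q^{n^{2}}(-q;q^{2})_{n}}{(q^{2};q^{2})_{n}}
=\prod_{n\ge 1}\frac{1}{(1-q^{8n-7})(1-q^{8n-4})(1-q^{8n-1})},
\end{equation*}
and then to show that each side is a generating function for one of the two partition classes in the statement.

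The product side is immediate: expanding each geometric factor $1/(1-q^{8n-c})$ for $c\in\{1,4,7\}$ yields the generating function for partitions whose parts all lie in residues $1,4,7\pmod 8$.

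For the series side, I would sort the admissible partitions by their length $n$. The minimal admissible partition with $n$ parts is the odd staircase $1+3+\cdots+(2n-1)=n^{2}$, which supplies the factor $q^{n^{2}}$. A general admissible $\lambda$ with $n$ parts is then obtained from this staircase by two independent moves: (i)~adding a nonincreasing sequence of even increments to the rungs, contributing the factor $1/(q^{2};q^{2})_{n}$; and (ii)~selecting, for each $i\in\{1,\ldots,n\}$, whether to ``bump'' the $i$\textsuperscript{th} rung of the staircase by $1$ so as to turn it into an even part, contributing $(-q;q^{2})_{n}=\prod_{i=1}^{n}(1+q^{2i-1})$. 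Describing the bump rule so that it preserves both the ``no consecutive parts'' condition and the ``no two evens at distance $2$'' condition, and is bijective, is the combinatorial heart of the argument.

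To prove the analytic identity itself, the standard route is to apply Watson's $q$-analog of Whipple's transformation to the series, converting it into a bilateral theta series, and then to invoke Jacobi's triple product identity to recognize this theta series as the product on the right. An equivalent route, more in line with the Bailey-pair viewpoint emphasized in the abstract, is to select a Bailey pair relative to $(a,q^{2})=(1,q^{2})$ whose $\beta$-side is exactly the left-hand series, and then read off the product by a single application of Bailey's lemma together with the Jacobi triple product.

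The hard part is the bijective step (ii). Naively adding $1$ to an odd rung of the staircase can create a forbidden gap of $1$ between adjacent parts, so the ``bump'' must be defined as a structured local rearrangement—typically a bumped block at the top of the staircase produces a block of even parts whose internal gaps are controlled precisely by the ``no two evens at distance~$2$'' restriction. Making this rule precise and verifying its inverse is where the real care lies; the analytic step, by contrast, is a routine application of standard $q$-series machinery.
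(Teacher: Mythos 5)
The paper itself offers no proof of this theorem: it is quoted as a classical result of G\"ollnitz and Gordon, recovered later as the case $k=i=2$ of Theorem \ref{Gordon-Gollnitz-odd}, with \eqref{first-G-G-I} as its analytic form. So your proposal has to stand on its own, and as written it does not: the combinatorial half is announced but not carried out. You correctly identify that the whole content of the series side is to show that $q^{n^{2}}(-q;q^{2})_{n}/(q^{2};q^{2})_{n}$ generates the admissible partitions with exactly $n$ parts, and you correctly observe that the naive ``bump'' (adding $1$ to an odd rung of the staircase $1+3+\cdots+(2n-1)$) destroys the gap conditions --- but you then stop, saying that making the rule precise ``is where the real care lies.'' That step is not a detail to be deferred; it is the theorem. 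Without an explicit, invertible rule assigning to each pair (even increments from $1/(q^{2};q^{2})_{n}$, subset of odd bumps from $(-q;q^{2})_{n}$) a unique admissible partition, nothing has been proved. The repair is a block-propagation argument: a bump applied at position $i$ must be pushed through the maximal run of parts above it whose gaps are already minimal, which is exactly the kind of insertion machinery this paper constructs in Sections 3--4 (the G\"ollnitz--Gordon marking and the insertion $I_{m}$) for its companion series, and in Andrews' 1967 paper for the classical series. The analytic half of your plan (Watson's transformation or a Bailey pair relative to $(1,q^{2})$ plus Jacobi's triple product) is standard and would go through, and the product-side interpretation is indeed immediate; but the proof is incomplete until the bijection in step (ii) is actually defined and its inverse verified.
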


\begin{thm}[Second G\"ollnitz-Gordon identity]\label{second-G-G-thm}
 The number of partitions of $n$ into distinct nonconsecutive
parts with no even parts differing by exactly $2$ and no parts equal to $1$ or $2$ equals the number of partitions of $n$ into parts $\equiv3,4,5\pmod 8$.
\end{thm}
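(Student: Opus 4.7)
The plan is to establish the Rogers--Ramanujan-type analytic identity
\begin{equation*}
\sum_{n \geq 0} \frac{q^{n^{2}+2n}\,(-q;q^{2})_{n}}{(q^{2};q^{2})_{n}} \;=\; \frac{1}{(q^{3},q^{4},q^{5};q^{8})_{\infty}},
\end{equation*}
and then interpret each side as the generating function for one of the two partition classes appearing in Theorem~\ref{second-G-G-thm}. The right-hand side is manifestly the generating function for partitions all of whose parts lie in the residue classes $3, 4, 5 \pmod 8$, so after proving the above identity it suffices to interpret the left-hand side as the generating function for partitions satisfying the gap conditions together with the restriction $\lambda_{\ell} \geq 3$.

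First I would fix the number of parts $n$ and observe that the minimal admissible partition is $(2n+1, 2n-1, \ldots, 5, 3)$, of weight $n^{2}+2n$. The quotient $(-q;q^{2})_{n}/(q^{2};q^{2})_{n}$ should then encode the admissible perturbations of this skeleton: the factor $1/(q^{2};q^{2})_{n}$ accounts for widening the successive gaps by arbitrary non-negative even amounts (preserving parities, hence preserving the ban on even parts differing by $2$), while the factor $(-q;q^{2})_{n}$ records an independent choice, for each part, of whether to shift it up by $1$ (flipping its parity from odd to even). Here the \GG\ referred to in the keywords provides the right framework to make this decomposition canonical and to verify that every admissible $\lambda$ is obtained exactly once; in particular, the marking keeps track of which shifted parts are even, so that the exceptional condition ``no two even parts differing by $2$'' follows automatically from the distinctness and $\geq 2$-spacing conditions on the skeleton.

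To prove the analytic identity itself, I would apply Bailey's lemma to a Bailey pair relative to $a=q^{2}$ chosen so that the Bailey transform outputs precisely the sum on the left; a suitable pair appears in Slater's tables of Bailey pairs tailored to the G\"ollnitz--Gordon modulus $8$. The image of the transform is a bilateral theta-type series which collapses, via the Jacobi triple product identity, to the stated infinite product $1/(q^{3},q^{4},q^{5};q^{8})_{\infty}$. An alternative, self-contained route is to show that both sides satisfy the same $q$-difference equation of the G\"ollnitz--Gordon type together with matching initial values at $q=0$.

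The main obstacle is the second step above: giving a clean bijective proof that the quotient $(-q;q^{2})_{n}/(q^{2};q^{2})_{n}$ exactly parametrizes the perturbations of the minimal skeleton while simultaneously respecting the spacing condition on even parts. The difficulty is that shifting a part by $1$ changes its parity, so the ``even parts differ by $\geq 4$'' constraint interacts nontrivially with the $(-q;q^{2})_{n}$ choice; the cleanest bookkeeping, and the one that scales to the generalizations studied in the rest of the paper, is exactly through the G\"ollnitz--Gordon marking.
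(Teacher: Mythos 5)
The paper contains no proof of Theorem~\ref{second-G-G-thm}: it is quoted as a classical result of G\"ollnitz and Gordon, recovered as the $k=2$, $i=1$ case of Theorem~\ref{Gordon-Gollnitz-odd}, and its generating-function form \eqref{second-G-G-I} is the $k=2$, $i=1$, $j=1$ instance of Bressoud's Theorem~\ref{Gollnitz-3-e}; note that the paper's own new identity, Theorem~\ref{Gollnitz-3-ee}, explicitly excludes $i=1$ and so does not apply here. There is therefore no internal proof to compare against. Your overall strategy --- establish \eqref{second-G-G-I} by a Bailey chain ending in Jacobi's triple product, read the product side as partitions into parts $\equiv 3,4,5\pmod 8$, and read the sum side as the gap-conditioned partitions --- is the classical route and is entirely consistent with the machinery reviewed in Section~2. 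One small remark: for $k=2$ every partition counted by $C_1(2,1;n)$ has only $1$-marked parts in its G\"ollnitz-Gordon marking, so the marking contributes nothing at this base case, although it is indeed the structure that scales to the general theorems.

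The one genuine gap is in your combinatorial reading of $(-q;q^2)_n$. Interpreting the factor $(1+q^{2k-1})$ as ``an independent choice, for each part, of whether to shift it up by $1$'' does not balance weights: shifting a single part up by $1$ adds $1$ to $|\lambda|$, whereas this factor must contribute $2k-1$. The correct move attached to the index $k$ is to add $1$ to the $k$-th largest part \emph{and} $2$ to each of the $k-1$ larger parts, for a total of $2k-1$; this makes the $k$-th largest part even while keeping all others' parities, and one checks that if positions $p<p'$ are both selected then the larger part receives at least $2$ more than the smaller, so any two resulting even parts end up at least $4$ apart --- which is exactly how the ``no even parts differing by exactly $2$'' condition is absorbed, rather than being something that ``follows automatically'' as you assert. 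With that correction the decomposition (minimal skeleton $(2n+1,\ldots,5,3)$ of weight $n^2+2n$, even gap-widening for $1/(q^2;q^2)_n$, cascading parity flips for $(-q;q^2)_n$) is a genuine weight- and part-preserving bijection, the inverse being ``record the positions of the even parts and undo the corresponding moves,'' and your argument closes.
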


The generating function version of the G\"ollnitz-Gordon identities was given as follows.
 \begin{equation}\label{first-G-G-I}
\sum_{n\geq0}\frac{q^{n^2}(-q;q^2)_n}{(q^2;q^2)_n}=\frac{1}{(q,q^4,q^7;q^8)_\infty},
\end{equation}
and
\begin{equation}\label{second-G-G-I}
\sum_{n\geq0}\frac{q^{n^2+2n}(-q;q^2)_n}{(q^2;q^2)_n}=\frac{1}{(q^3,q^4,q^5;q^8)_\infty}.
\end{equation}

Here and in the sequel, we assume that $|q|<1$ and adopt the standard notation \cite{Andrews-1976}:
\[(a;q)_\infty=\prod_{i=0}^{\infty}(1-aq^i), \quad (a;q)_n=\frac{(a;q)_\infty}{(aq^n;q)_\infty},\]
and
\[(a_1,a_2,\ldots,a_m;q)_\infty=(a_1;q)_\infty(a_2;q)_\infty\cdots(a_m;q)_\infty.
\]

In 1967, Andrews \cite{Andrews-1967} found the following combinatorial generalization of the G\"ollnitz-Gordon identities, which can be called the   Andrews-G\"ollnitz-Gordon theorem.

\begin{thm}[Andrews-G\"ollnitz-Gordon identities]\label{Gordon-Gollnitz-odd}
For $k\geq i\geq1$, let $C_1(k,i;n)$ denote the number of partitions $\lambda$ of $n$ of the form $(1^{f_1}\,2^{f_{2}}\,3^{f_3}\cdots)$, such that
\begin{itemize}
\item[{\rm (1)}] $f_1(\lambda)+f_2(\lambda)\leq i-1${\rm;}

\item[{\rm (2)}] $f_{2t+1}(\lambda)\leq1${\rm;}

\item[{\rm (3)}] $f_{2t}(\lambda)+f_{2t+1}(\lambda)+f_{2t+2}(\lambda)\leq k-1$.
\end{itemize}
For $k\geq i\geq1$, let $D_1(k,i;n)$ denote the number of partitions of $n$ into parts $\not\equiv2\pmod4$ and $\not\equiv0,\pm(2i-1)\pmod{4k}$. Then for $k\geq i\geq1$ and  $n\geq0$,
\[C_1(k,i;n)=D_1(k,i;n).\]
\end{thm}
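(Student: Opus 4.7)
The plan is to follow Andrews' $q$-difference scheme, the same template he employed for the Rogers--Ramanujan--Gordon theorem: introduce a catalytic variable, extract a three-term recurrence for a refined generating function, iterate it into a bilateral series, and invoke Jacobi's triple product to recognise the resulting infinite product on the $D_1$ side. Concretely, for $k\ge i\ge 1$ I would refine $C_1$ by the number of parts, letting $c(k,i;m,n)$ count the partitions enumerated by $C_1(k,i;n)$ that have exactly $m$ parts, and set
\[J_{k,i}(x;q)=\sum_{m,n\ge 0}c(k,i;m,n)\,x^m q^n,\qquad J_{k,i}(1;q)=\sum_{n\ge 0}C_1(k,i;n)\,q^n,\]
so that $J_{k,i}(0;q)=1$ and $J_{k,i}(x;q)-J_{k,i-1}(x;q)$ isolates the partitions with $f_1+f_2=i-1$.

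The key step is to derive a three-term $q$-difference equation by conditioning on the multiplicities $f_1(\lambda)$ and $f_2(\lambda)$ of the smallest parts in a partition $\lambda$ counted by $J_{k,i}$, then removing the $1$s and $2$s and shifting every remaining part down by $2$. The shift corresponds to the substitution $x\mapsto xq^2$ in the generating function and rewrites conditions (1) and (3) so that the new smallest-parts cap is governed by $k-i+1$ rather than $i$. The expected outcome is an identity of the shape
\[J_{k,i}(x;q)-J_{k,i-1}(x;q)=(xq)^{i-1}(1+xq^{2i-1})\,J_{k,k-i+1}(xq^2;q),\]
up to the exact power of $q$ and sign conventions to be pinned down; the factor $(-xq;q^2)$ that emerges during iteration tracks condition (2), namely that odd parts are distinct.

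Iterating this recurrence indefinitely in $i$ -- each application produces a term that, by a further application to the right-hand side, contributes a copy of $J_{k,i}$ with $x$ replaced by a higher power of $xq^2$ -- collapses $J_{k,i}(1;q)$ into a bilateral series
\[J_{k,i}(1;q)=\frac{(-q;q^2)_\infty}{(q^2;q^2)_\infty}\sum_{n\in\mathbb{Z}}(-1)^n q^{2kn^2+(2k-2i+1)n}.\]
Jacobi's triple product converts the bilateral sum into $(q^{2i-1},q^{4k-2i+1},q^{4k};q^{4k})_\infty$, and combining with the prefactor gives
\[\frac{(q^{2i-1},q^{4k-2i+1},q^{4k};q^{4k})_\infty}{(q;q^2)_\infty(q^4;q^4)_\infty},\]
which by direct inspection of the excluded residue classes is exactly $\sum_{n\ge 0}D_1(k,i;n)\,q^n$, the generating function for partitions into parts $\not\equiv 2\pmod 4$ and $\not\equiv 0,\pm(2i-1)\pmod{4k}$. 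Matching coefficients of $q^n$ then yields the desired equality.

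The principal obstacle is the derivation of the three-term recurrence. Peeling off the smallest parts has to respect all three defining constraints of $C_1$ at once, and the triple cap $f_{2t}+f_{2t+1}+f_{2t+2}\le k-1$ at the boundary $t=0$ interacts nontrivially with the cap $f_1+f_2\le i-1$ once one redistributes $2$s and $1$s. Getting the explicit factor $(xq)^{i-1}(1+xq^{2i-1})$ right, so that the subsequent iteration telescopes cleanly to the bilateral sum above, is the delicate piece of combinatorial bookkeeping; once that relation is in hand, the remaining Jacobi-triple-product computation is routine.
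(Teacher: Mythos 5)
First, note that the paper does not prove this theorem at all: it is quoted as Andrews' 1967 result, cited to \cite{Andrews-1967}, and the paper's own machinery (Bailey pairs, the G\"ollnitz--Gordon marking) is aimed at a different multisum representation of the same counting function. So your proposal can only be measured against the standard proof, which is indeed the $q$-difference-equation argument you outline. Your endgame is arithmetically sound: the prefactor $(-q;q^2)_\infty/(q^2;q^2)_\infty$ equals $1/\bigl((q;q^2)_\infty(q^4;q^4)_\infty\bigr)$, the Jacobi triple product of $\sum_{n\in\mathbb{Z}}(-1)^nq^{2kn^2+(2k-2i+1)n}$ is $(q^{2i-1},q^{4k-2i+1},q^{4k};q^{4k})_\infty$, and the resulting product is exactly the generating function for $D_1(k,i;n)$.

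The gap is in the central recurrence, and it is structural rather than a matter of ``pinning down powers of $q$.'' Isolating the partitions with $f_1+f_2=i-1$ and peeling off the $1$s and $2$s splits into two cases because of condition (2): either $f_1=0,\ f_2=i-1$, contributing $(xq^2)^{i-1}$, or $f_1=1,\ f_2=i-2$, contributing $xq\cdot(xq^2)^{i-2}$. After the shift $x\mapsto xq^2$, condition (3) at $t=1$ (namely $f_2+f_3+f_4\le k-1$) turns into the new smallest-parts cap $f_1+f_2\le k-1-f_2(\lambda)$, which is $k-i$ in the first case but $k-i+1$ in the second. Hence the two branches feed \emph{different} members of the family, $J_{k,k-i+1}(xq^2;q)$ and $J_{k,k-i+2}(xq^2;q)$, and the right-hand side cannot be a single $J_{k,k-i+1}$ multiplied by $(1+xq^{2i-1})$. (As a further symptom, your two monomials $x^{i-1}q^{i-1}$ and $x^iq^{3i-2}$ have different $x$-degrees, yet both cases remove exactly $i-1$ parts.) Separately, ``iterating the recurrence indefinitely'' does not by itself yield the bilateral series: Andrews' argument requires exhibiting explicit analytic solutions $J_{k,i}(x;q)$ built from auxiliary functions $H_{k,i}$, verifying that they satisfy the same coupled $q$-difference system with the same initial conditions, invoking uniqueness of solutions, and only then specializing $x=1$ to reach the theta series. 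That existence-and-uniqueness step is the actual content of the proof and is absent from your sketch. You have correctly identified where the difficulty lies, but the proposal as written would not close.
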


Setting $k=2$ and $i=2$ or $1$,  Theorem \ref{Gordon-Gollnitz-odd} reduces to Theorem \ref{first-G-G-thm} and Theorem \ref{second-G-G-thm}, respectively. In 1980, Bressoud \cite{Bressoud-1980} extended the Andrews-G\"ollnitz-Gordon identities to even moduli, which has been called the  Bressoud-G\"ollnitz-Gordon theorem.

\begin{thm}[Bressoud-G\"ollnitz-Gordon identities]\label{Gordon-Gollnitz-even}
For $k> i\geq1$, let $C_0(k,i;n)$ denote the number of partitions of $n$ of the form $\lambda=(1^{f_1} 2^{f_{2}}3^{f_3}\cdots)$ such that
\begin{itemize}
\item[{\rm (1)}] $f_1(\lambda)+f_2(\lambda)\leq i-1$\rm{;}

\item[{\rm (2)}] $f_{2t+1}(\lambda)\leq1$\rm{;}

\item[{\rm (3)}] $f_{2t}(\lambda)+f_{2t+1}(\lambda)+f_{2t+2}(\lambda)\leq k-1$\rm{;}

\item[{\rm (4)}] if $f_{2t}(\lambda)+f_{2t+1}(\lambda)+f_{2t+2}(\lambda)=k-1$, then $tf_{2t}(\lambda)+tf_{2t+1}(\lambda)+(t+1)f_{2t+2}(\lambda)\equiv {O}_\lambda(2t+1)+i-1\pmod{2}$, where ${O}_\lambda(N)$ denotes the number of odd parts not exceeding $N$ in $\lambda$.
\end{itemize}

\noindent For $k>i\geq1$, let $D_0(k,i;n)$ denote the number of partitions of $n$ into parts $\not\equiv2\pmod4$ and $\not\equiv0,\pm(2i-1)\pmod{4k-2}$. Then, for $k>i\geq1$ and $n\geq0$,
\[C_0(k,i;n)=D_0(k,i;n).\]
\end{thm}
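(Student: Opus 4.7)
The plan is to establish the identity by computing the generating functions on both sides and showing they coincide, following Bressoud's Bailey-pair framework. On the product side, $\sum_{n \geq 0} D_0(k,i;n)\, q^n$ is immediate from the residue restrictions on parts: after accounting for the overlap between parts $\equiv 2 \pmod 4$ and parts $\equiv 0, \pm(2i-1) \pmod{4k-2}$ and applying Jacobi's triple product, it rewrites as a theta-quotient of modulus $4k-2$.

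For the combinatorial side I would invoke the \GG{} named in the keywords: the canonical labelling that assigns to each part of a partition $\lambda$ satisfying conditions (1)--(3) a mark from $\{1,2,\ldots,k-1\}$ so that parts carrying the same mark form a G\"ollnitz-Gordon partition (distinct, nonconsecutive, no two even parts differing by exactly $2$). Decomposing $\lambda$ by marks produces a multi-sum
\[
\sum_{n \geq 0} C_0(k,i;n)\, q^n \;=\; \sum_{N_1 \geq N_2 \geq \cdots \geq N_{k-1} \geq 0} \frac{q^{Q(N_1,\ldots,N_{k-1})} (-q;q^2)_{N_1}}{(q^2;q^2)_{N_1 - N_2}\cdots(q^2;q^2)_{N_{k-2} - N_{k-1}}\, (q^4;q^4)_{N_{k-1}}},
\]
where $Q$ is a quadratic form depending on $i$ that encodes the minimum-weight configuration of each marked layer. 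To reduce this multi-sum to the product, I would start from a Bailey pair corresponding to the classical G\"ollnitz-Gordon case \eqref{first-G-G-I}--\eqref{second-G-G-I} (that is, $k=2$) and iterate Bailey's lemma $k-2$ times.

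The main obstacle is condition (4), the parity congruence that fires only when a consecutive triple $f_{2t}(\lambda) + f_{2t+1}(\lambda) + f_{2t+2}(\lambda)$ saturates at $k-1$. This is precisely the feature that distinguishes Bressoud's even-modulus variant from Andrews' odd-modulus Theorem~\ref{Gordon-Gollnitz-odd}, and it is what drops the modulus from $4k$ down to $4k-2$. In the Bailey chain it forces the use of the ``even'' transform (producing the $(q^4;q^4)_{N_{k-1}}$ factor at the bottom rather than $(q^2;q^2)_{N_{k-1}}$); combinatorially, it prescribes the parity of a linear statistic on each saturated block of the \GG. Correctly encoding this parity in the multi-sum and matching it to the extra factor $(q^{4k-2};q^{4k-2})_\infty$ on the product side is the delicate step, and is the place where the argument departs cleanly from the odd-modulus case.
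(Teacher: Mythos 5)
First, a point of comparison: this paper does not prove Theorem \ref{Gordon-Gollnitz-even} at all --- it is quoted as a known result of Bressoud \cite{Bressoud-1980} --- so there is no internal proof to measure your attempt against. Judged on its own terms, your proposal is a roadmap rather than a proof, and the two steps that carry all the difficulty are exactly the ones you defer. The product side and the Bailey-chain reduction of a multi-sum to a theta quotient are indeed routine; that is the content of Theorem \ref{Gollnitz-3-e} and its proof in \cite{Bressoud-Ismail-Stanton-2000}, mirrored in Section 2 of this paper. What is not routine is your assertion that ``decomposing $\lambda$ by marks produces a multi-sum'' equal to $\sum_n C_0(k,i;n)q^n$. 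Even in the odd-modulus case, showing that the G\"ollnitz-Gordon marking converts conditions (1)--(3) into precisely the stated quadratic form and denominator requires a genuine argument (the analogue of Kur\c{s}ung\"{o}z's analysis \cite{Kursungoz-2010}). For $C_0$ one must in addition show that condition (4) --- which is imposed on \emph{every} saturated triple $f_{2t}(\lambda)+f_{2t+1}(\lambda)+f_{2t+2}(\lambda)=k-1$ anywhere in the partition --- localizes to a single row of the marking and contributes exactly the replacement of $(q^2;q^2)_{N_{k-1}}$ by $(q^4;q^4)_{N_{k-1}}$. You correctly flag this as ``the delicate step,'' but naming a gap is not closing it: nothing in the proposal explains why the global parity constraint reduces to a condition on the $(k-1)$-marked parts alone. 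That localization is precisely what Lemma \ref{parity k-1 sequence-over} and Theorem \ref{parity k-1 sequence} of this paper establish (all $(k-1)$-bands are good if and only if the bands induced by the $(k-1)$-marked parts are good), and some such lemma is indispensable before the congruence can be encoded in the bottom layer of the multi-sum.

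A secondary caution: the base case of your Bailey iteration must be chosen to produce the $(q^4;q^4)_{N_{k-1}}$ factor from the start (in the paper's notation, the pair $\beta^{(0)}_n(1,q)=(-1)^n/(q^2;q^2)_n$ of Slater's E(2), not the unit pair), rather than having the even-modulus feature ``forced'' partway through the chain; the choice of transform at each link is otherwise the same as in the odd case. In summary, the skeleton you describe is the correct and standard one, but the combinatorial half --- the only place where condition (4) must actually be confronted --- is missing, so the proposal does not yet constitute a proof.
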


Bressoud \cite{Bressoud-1980} also gave the generating function versions of Theorems \ref{Gordon-Gollnitz-odd} and \ref{Gordon-Gollnitz-even}.

\begin{thm}[Bressoud, 1980]\label{Gollnitz-3-e}
For  $j=0$ or $1$ and $(2k+j)/2> i\geq1$,
\begin{equation}\label{Gollnitz-A2}
\begin{split}
&\sum_{N_1\geq \cdots \geq N_{k-1}\geq0}\frac{(-q^{1-2N_1};q^2)_{N_1}
q^{2(N^2_1+\cdots+N^2_{k-1}+N_i+\cdots+N_{k-1})}}
{(q^2;q^2)_{N_1-N_2}\cdots(q^2;q^2)_{N_{k-2}-N_{k-1}}(q^{4-2j};q^{4-2j})_{N_{k-1}}}\\
&=\frac{(q^2;q^4)_\infty(q^{2i-1},q^{4k-2i-1+2j},q^{4k-2+2j};q^{4k-2+2j})_\infty}
{(q;q)_\infty}.
\end{split}
\end{equation}
\end{thm}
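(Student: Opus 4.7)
The plan is to establish Theorem~\ref{Gollnitz-3-e} via the Bailey chain mechanism at base $q^2$. A cosmetic first step is the application of the reflection identity $(-q^{1-2N_1};q^2)_{N_1}=q^{-N_1^2}(-q;q^2)_{N_1}$, which rewrites the left-hand side of \eqref{Gollnitz-A2} as
\[
\sum_{N_1\ge\cdots\ge N_{k-1}\ge 0}\frac{(-q;q^2)_{N_1}\,q^{N_1^2+2N_2^2+\cdots+2N_{k-1}^2+2(N_i+\cdots+N_{k-1})}}{(q^2;q^2)_{N_1-N_2}\cdots(q^2;q^2)_{N_{k-2}-N_{k-1}}(q^{4-2j};q^{4-2j})_{N_{k-1}}},
\]
a multi-sum of Rogers--Ramanujan--Bressoud type in which every Bailey-chain step visibly contributes one summation index $N_s$, one quadratic term $2N_s^2$, and one denominator factor $(q^2;q^2)_{N_{s-1}-N_s}$.

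Next I would take as seed a G\"ollnitz--Gordon Bailey pair relative to $a=1$ at base $q^2$, i.e.\ a pair $(\alpha_n,\beta_n)$ with $\beta_n=1/(q^2;q^2)_n$ and $\alpha_n$ of theta type whose insertion into the Bailey transform reproduces \eqref{first-G-G-I}--\eqref{second-G-G-I}; such a pair appears in Slater's tables. I would then iterate the weak form of Bailey's lemma with $\rho_1,\rho_2\to\infty$ a total of $k-2$ times, each iteration appending one summation index together with the expected factors $(q^2;q^2)_{N_{s}-N_{s+1}}^{-1}$ and $q^{2N_s^2}$ on the $\beta$-side, while multiplying the $\alpha$-side by $q^{2n^2}$. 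For the iteration at position $k-i$ I would instead only send $\rho_2\to\infty$ while fixing $\rho_1$ at a specific finite value, thereby inserting the linear shift $2(N_i+\cdots+N_{k-1})$ that is responsible for the appearance of the factor $q^{2i-1}$ in the final product.

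The last step of the chain is where the two cases $j=0,1$ diverge. Taking $\rho_1,\rho_2\to\infty$ yields the innermost Pochhammer $(q^4;q^4)_{N_{k-1}}=(q^{4-2j};q^{4-2j})_{N_{k-1}}$ with $j=1$ and produces modulus $4k$, whereas keeping a finite parameter and taking only one limit yields $(q^2;q^2)_{N_{k-1}}$ and modulus $4k-2$ (the case $j=0$). Once the iteration terminates, the $\alpha$-side has collapsed to a bilateral theta series in base $q^{4k-2+2j}$, which Jacobi's triple product identity evaluates directly to
\[
\frac{(q^2;q^4)_\infty(q^{2i-1},q^{4k-2i-1+2j},q^{4k-2+2j};q^{4k-2+2j})_\infty}{(q;q)_\infty},
\]
matching the right-hand side of \eqref{Gollnitz-A2}.

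The main obstacle will be the parameter bookkeeping: one must calibrate the seed pair and the sequence of Bailey specialisations so that the quadratic exponent on the $\beta$-side consolidates precisely to $N_1^2+2N_2^2+\cdots+2N_{k-1}^2$ (rather than the naive $2\sum N_s^2$), with the discrepancy $q^{-N_1^2}$ absorbed by the reflection identity above; so that the linear shift $2(N_i+\cdots+N_{k-1})$ enters the chain at exactly the $(k-i)$-th iteration and correctly produces $q^{2i-1}$ in the product; and so that the $(-q;q^2)_{N_1}$ factor is propagated unchanged from the G\"ollnitz--Gordon seed through all subsequent iterations. A secondary difficulty is parametrising the terminating step uniformly in $j$, so that both modulus $4k$ and modulus $4k-2$ emerge from a single formula. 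Once the parameter matching is verified for a small case such as $k=2$, the general case follows by uniformity of the Bailey-chain construction.
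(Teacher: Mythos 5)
Your overall strategy---seed a Bailey chain with a G\"ollnitz--Gordon pair, iterate, and finish with Jacobi's triple product---is exactly the route the paper takes (it attributes the proof of Theorem \ref{Gollnitz-3-e} to Bressoud, Ismail and Stanton via Lemmas \ref{BL-1}, \ref{BL-2} and Proposition \ref{Bl-Bp}), and your opening reduction $(-q^{1-2N_1};q^2)_{N_1}=q^{-N_1^2}(-q;q^2)_{N_1}$ is correct. But three structural claims about how the chain produces the multisum are wrong, and each would derail the computation.

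First, you have the direction of the chain inverted. Each link of a Bailey chain wraps a new outermost summation around the previous $\beta$, so the innermost Pochhammer $(q^{4-2j};q^{4-2j})_{N_{k-1}}$ and the entire $j$-dependence are fixed by the seed (in the paper: the unit Bailey pair for $j=1$, Slater's E(2) for $j=0$), not by ``the last step of the chain''. Your pairing is also reversed: $j=1$ gives $(q^2;q^2)_{N_{k-1}}$ with modulus $4k$, while $j=0$ gives $(q^4;q^4)_{N_{k-1}}$ with modulus $4k-2$, the opposite of what you wrote. Second, and for the same reason, the factor $(-q;q^2)_{N_1}$ cannot be ``propagated unchanged from the seed'': any factor sitting in the seed's $\beta_n$ ends up attached to the innermost index $N_{k-1}$. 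Since here it is attached to the outermost index $N_1$ (together with the half-weight $q^{N_1^2}$ that your reflection identity exposes), it must be created by the final link, namely the $\rho_1\to\infty$, $\rho_2=-\sqrt{aq}$ specialization of Lemma \ref{BL-2}; that same link also supplies the prefactor $(-q;q^2)_\infty/(q^2;q^2)_\infty=(q^2;q^4)_\infty/(q;q)_\infty$ appearing on the right of \eqref{Gollnitz-A2}. Third, a single iteration ``at position $k-i$'' with a finite $\rho_1$ cannot insert the linear shift $2(N_i+\cdots+N_{k-1})$: one application of Bailey's lemma decorates only one index. Those $k-i$ linear terms require one modified link apiece---each of the first $k-i$ iterations must be the combined step of Corollary \ref{corBL} (Lemma \ref{BL-1} followed by Proposition \ref{Bl-Bp}), each contributing a single factor $q^{N_s}$. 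Finally, ``verify $k=2$ and appeal to uniformity'' is not an argument for general $k$; the induction is carried by the chain itself once the links are set up correctly.
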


In this article, we will give a new generating function version of Theorems \ref{Gordon-Gollnitz-odd} and \ref{Gordon-Gollnitz-even} with the aid of Bailey pairs, which is stated as follows.

\begin{thm}\label{Gollnitz-3-ee} For $j=0$ or $1$, $k\geq i\geq2$ and $(2k+j)/2>2$,
\begin{equation}\label{Gollnitz-AA1}
\begin{split}
&\sum_{N_1\geq \cdots\geq N_{k-1}\geq0}\frac{q^{2(N_1^2+\cdots+N_{k-1}^2+N_i+\cdots+N_{k-1})}(-q^{1-2N_2};q^2)_{N_2}(-q^{1+2N_1};q^2)_\infty}{(q^2;q^2)_{N_1-N_2}\cdots(q^2;q^2)_{N_{k-2}-N_{k-1}} (q^{4-2j};q^{4-2j})_{N_{k-1}}}\\
&=\frac{(q^2;q^4)_\infty(q^{4k-2+2j},q^{2i-1},q^{4k-2i-1+2j};q^{4k-2+2j})_\infty}
{(q;q)_\infty}.
\end{split}
\end{equation}
\end{thm}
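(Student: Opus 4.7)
The plan is to establish identity \eqref{Gollnitz-AA1} via iterated applications of Bailey's lemma at base $q^2$, in the spirit of Bressoud's derivation of \eqref{Gollnitz-A2} but with an asymmetric choice of parameters at the outermost step. The starting point is a base Bailey pair $(\alpha^{(0)}_n,\beta^{(0)}_n)$ relative to $a=1$ and base $q^2$ whose $\beta$-entry carries the factor $1/(q^{4-2j};q^{4-2j})_n$; the ``unit-type'' seed that enters Bressoud's proof of \eqref{Gollnitz-A2} is a natural candidate, with slightly different seeds according as $j=0$ or $j=1$.

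I would then apply the base-$q^2$ Bailey lemma $k-3$ times in succession, at each step sending both auxiliary parameters $\rho_1,\rho_2$ to infinity. Each application introduces a fresh summation index $N_\ell$ with weight $q^{2N_\ell^2}$ in the numerator and $(q^2;q^2)_{N_{\ell-1}-N_\ell}$ in the denominator; by a suitable choice of the $a$-dependent parameters one inserts the additional $q^{2N_\ell}$ factors for indices $\ell\geq i$. This produces a $(k-2)$-fold sum over $N_2\geq\cdots\geq N_{k-1}\geq 0$ that matches the inner portion of the left-hand side of \eqref{Gollnitz-AA1}.

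The decisive step is the final application of Bailey's lemma, which produces the outermost index $N_1$. Instead of the symmetric choice $\rho_1,\rho_2\to\infty$ used by Bressoud (which produces the factor $(-q^{1-2N_1};q^2)_{N_1}$ appearing in \eqref{Gollnitz-A2}), I would keep one parameter $\rho=-q$ finite and send the other to infinity, so that the $\beta$-side of Bailey's lemma contributes the partial factor $(-q^{1-2N_2};q^2)_{N_2}$ at the $N_2$-level, while the resulting $\alpha$-series at the top sums by Jacobi's triple product to the infinite factor $(-q^{1+2N_1};q^2)_\infty$ times the infinite product on the right of \eqref{Gollnitz-AA1}. Alternatively, once the left-hand side of \eqref{Gollnitz-AA1} has been expressed as a limit of a Bailey chain, the target product can be identified by direct comparison with Theorem \ref{Gollnitz-3-e}, since both chains terminate at the same $\alpha$-series.

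The main technical obstacle I anticipate is the bookkeeping in this asymmetric final Bailey step: one must track how the single finite parameter $\rho=-q$ distributes across the two outermost levels $N_1$ and $N_2$, and verify that the residual $\alpha$-series telescopes correctly under Jacobi's triple product. The hypotheses $k\geq i\geq 2$ and $(2k+j)/2>2$ in the statement reflect precisely the requirement that at least two full Bailey-lemma applications are available, so that this asymmetric step makes sense.
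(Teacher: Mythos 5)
Your overall strategy---seed a Bailey pair according to the parity $j$, run a Bailey chain in which all but one application takes the symmetric limit $\rho_1,\rho_2\to\infty$, insert the linear exponents $N_i+\cdots+N_{k-1}$ along the way, make a single asymmetric application with $\rho_2=-\sqrt{aq}$, and finish with Jacobi's triple product---is exactly the paper's approach. (The paper works at base $q$ and substitutes $q\mapsto q^2$ only at the very end, and it produces the linear exponents via Proposition \ref{Bl-Bp} rather than a ``choice of $a$-dependent parameters,'' but those are cosmetic differences.) Your closing observation that the right-hand side can be identified because the chain terminates at the same $\alpha$-series as in Theorem \ref{Gollnitz-3-e} is also correct and is how the paper concludes.

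There is, however, a genuine gap in where you place the asymmetric step, and it is precisely the point where the new identity differs from Bressoud's. If the application with $\rho_2=-\sqrt{aq}$ is the \emph{final} index-creating step, the factor $(-\sqrt{aq};q)_r$ attaches to the top index $N_1$; since $(-q;q^2)_{N_1}=q^{N_1^2}(-q^{1-2N_1};q^2)_{N_1}$, this reproduces exactly the left-hand side of \eqref{Gollnitz-A2}, not of \eqref{Gollnitz-AA1}. You cannot have the finite Pochhammer land at level $N_2$ while the asymmetric parameter enters at level $N_1$: the ``bookkeeping obstacle'' you flag is not a bookkeeping issue but an ordering issue. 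The correct move is to apply Lemma \ref{BL-2} one step \emph{earlier}, to $(\alpha^{(k-2)}_n,\beta^{(k-2)}_n)$: this plants $(-q^{1/2};q)_{N_2}$ in the numerator (which, via $(-q^{1/2};q)_{N_2}=q^{N_2^2/2}(-q^{(1-2N_2)/2};q)_{N_2}$, restores the full quadratic weight at level $N_2$ and produces $(-q^{1-2N_2};q^2)_{N_2}$ after $q\mapsto q^2$) and leaves $(-q^{1/2};q)_n$ in the denominator. A final \emph{symmetric} application of Lemma \ref{BL-1} then creates $N_1$ with full weight $q^{N_1^2}$ and converts that denominator into $(-q^{1/2};q)_{N_1}$; multiplying the $n\to\infty$ limit by $(-q^{1/2};q)_\infty(q;q)_\infty$ turns $(-q^{1/2};q)_\infty/(-q^{1/2};q)_{N_1}$ into $(-q^{(2N_1+1)/2};q)_\infty$, i.e.\ the factor $(-q^{1+2N_1};q^2)_\infty$ of \eqref{Gollnitz-AA1}. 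Without this reordering your chain proves Theorem \ref{Gollnitz-3-e}, not Theorem \ref{Gollnitz-3-ee}.
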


We will show that the left-hand side of \eqref{Gollnitz-AA1} can be interpreted combinatorially as the generating function of $C_j(k,i;n)$  in Theorems \ref{Gordon-Gollnitz-odd} and \ref{Gordon-Gollnitz-even}. More precisely, We will give the following formula for the generating function of $C_j(k,i;m,n)$,
where $C_j(k,i;m,n)$ denotes the number of partitions enumerated by $C_j(k,i;n)$ with exactly $m$ parts.

\begin{thm}\label{main-thm}
For $j=0$ or $1$, $k\geq i\geq2$ and $(2k+j)/2>2$,
\begin{equation}\label{main-eqn}
\begin{split}
&\displaystyle\sum_{m,n\geq0}C_j(k,i;m,n)x^mq^n\\
&=\sum_{N_1\geq \cdots\geq N_{k-1}\geq0}\frac{q^{2(N_1^2+\cdots+N_{k-1}^2+N_i+\cdots+N_{k-1})}(-q^{1-2N_2};q^2)_{N_2}(-xq^{1+2N_1};q^2)_\infty x^{N_1+\cdots+N_{k-1}}}{(q^2;q^2)_{N_1-N_2}\cdots(q^2;q^2)_{N_{k-2}-N_{k-1}} (q^{4-2j};q^{4-2j})_{N_{k-1}}}.
\end{split}
\end{equation}
\end{thm}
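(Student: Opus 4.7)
The plan is to prove Theorem~\ref{main-thm} combinatorially by interpreting every summand on the right-hand side of \eqref{main-eqn} as the generating function of a distinguished class of partitions counted by $C_j(k,i;m,n)$, indexed by a marking statistic. The principal tool is the \GG{} advertised in the keywords: to each $\lambda\in C_j(k,i;n)$ one attaches labels (``marks'') from $\{1,2,\ldots,k-1\}$ by a canonical bottom-up rule, designed so that parts sharing a mark are well separated and so that conditions (1)--(4) of Theorems~\ref{Gordon-Gollnitz-odd} and~\ref{Gordon-Gollnitz-even} translate into transparent constraints on each individual mark class. Writing $N_s=N_s(\lambda)$ for the number of parts of $\lambda$ carrying mark $s$, the rule should force $N_1\geq N_2\geq\cdots\geq N_{k-1}\geq 0$ together with $\ell(\lambda)=N_1+\cdots+N_{k-1}$, which explains the factor $x^{N_1+\cdots+N_{k-1}}$.

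I would then fix a profile $(N_1,\ldots,N_{k-1})$ and construct an explicit \emph{minimal} partition $\mu=\mu(N_1,\ldots,N_{k-1})\in C_j(k,i;n)$ realizing that profile. The forced separations give $|\mu|=2(N_1^2+\cdots+N_{k-1}^2+N_i+\cdots+N_{k-1})$: the squares come from the staircase-like spacing between successive parts of the same mark, while the linear shift $N_i+\cdots+N_{k-1}$ records condition~(1), which prohibits marks $\geq i$ from occupying the two smallest positions. Any $\lambda$ with the same profile would then be recovered from $\mu$ by three independent families of moves. First, vertical shifts within each mark class contribute the denominator factors $(q^2;q^2)_{N_s-N_{s+1}}^{-1}$ for $s<k-1$. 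Second, a collective shift of the top mark class, with step dictated by $j$, contributes $(q^{4-2j};q^{4-2j})_{N_{k-1}}^{-1}$. Third, ``toggling'' certain even parts to distinct odd parts produces the theta-like numerator $(-q^{1-2N_2};q^2)_{N_2}$ coming from the two lowest marks, together with $(-xq^{1+2N_1};q^2)_\infty$ coming from the tail of arbitrary distinct odd parts that may be inserted above the highest mark-$1$ part (each of the latter increasing the part count by one, hence the extra $x$). Multiplying these contributions would reproduce the summand of \eqref{main-eqn} exactly.

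The main obstacle I anticipate is showing that these families of moves are genuinely independent and weight-preserving, particularly in the $j=0$ Bressoud case, where condition~(4) is a parity congruence on saturated blocks that couples the number of odd parts to the collective shift of the top mark. This coupling is precisely the mechanism forcing the base of the last $q$-Pochhammer to become $q^4$ rather than $q^2$, and it also governs the appearance of $(-q^{1-2N_2};q^2)_{N_2}$. I would handle it by tracking each elementary move on both sides of the congruence $tf_{2t}(\lambda)+tf_{2t+1}(\lambda)+(t+1)f_{2t+2}(\lambda)\equiv O_\lambda(2t+1)+i-1\pmod 2$, checking that admissibility is preserved throughout and that no overcounting occurs. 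Once the resulting bijective decomposition is established, summing over admissible profiles $(N_1,\ldots,N_{k-1})$ would immediately yield \eqref{main-eqn}; Theorem~\ref{Gollnitz-3-ee} then follows by specializing $x=1$.
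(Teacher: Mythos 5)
Your outline follows the same strategy as the paper: use the G\"ollnitz-Gordon marking to stratify $\mathbb{C}_j(k,i)$ by the profile $(N_1,\ldots,N_{k-1})$, identify the even-part ``backbone'' whose generating function is the Kur\c{s}ung\"oz-type quotient $q^{2(N_1^2+\cdots+N_{k-1}^2+N_i+\cdots+N_{k-1})}/\bigl((q^2;q^2)_{N_1-N_2}\cdots(q^{4-2j};q^{4-2j})_{N_{k-1}}\bigr)$, and then account for the odd parts through the two Pochhammer factors in the numerator. However, the proposal has two genuine gaps at exactly the points where the work lies. First, the factor $(-q^{1-2N_2};q^2)_{N_2}=(1+q^{1-2N_2})(1+q^{3-2N_2})\cdots(1+q^{-1})$ has \emph{negative} exponents and carries no power of $x$: each admissible move must strictly \emph{decrease} the weight by an odd amount $2p-1$ while keeping the number of parts fixed and preserving all difference conditions (and, for $j=0$, the parity condition). ``Toggling certain even parts to distinct odd parts'' does neither of these things as stated; realizing a single factor $1+q^{1-2p}$ requires a chain of modifications propagating along the $2$-marked parts $\lambda^{(2)}_1,\ldots,\lambda^{(2)}_p$, and making this well-defined and invertible forces a classification of those parts into types and clusters and an auxiliary class of ``special'' partitions with an overlined odd part. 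None of this is visible in your plan, and without it the claimed weight-preservation and invertibility cannot be checked.

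Second, the two families of odd-part moves are not independent, even for $j=1$. Both the insertion of a large odd part (accounting for $(-xq^{1+2N_1};q^2)_\infty$) and the weight-decreasing move (accounting for $(-q^{1-2N_2};q^2)_{N_2}$) create odd parts in the partition, and the resulting odd parts can occupy overlapping ranges. To invert the composite map one must be able to read off, from the largest odd part of $\pi$ alone, which operation produced it and with which parameter; this requires a dichotomy (``insertion part'' versus ``reduction part'') and an ordering theorem that tells you, at each stage, which operation to perform next and guarantees that the parameters extracted on the way back are strictly decreasing, so that $\eta$ and $\tau$ are recovered as partitions into \emph{distinct} odd parts in the correct ranges. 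Your plan only flags the $j=0$ parity coupling as the main obstacle, but this interaction problem is present already in the $j=1$ case and is not addressed. As it stands the proposal is a plausible roadmap rather than a proof: the decomposition it asserts is the right one, but the bijections that would justify it are not constructed, and their existence is precisely the content that needs proving.
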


 This paper is organized as follows. In Section 2, we will give a proof of Theorem \ref{Gollnitz-3-ee} with the aid of Bailey pairs.  Section 3 is a
preliminary for the proof of Theorem 
\ref{main-thm}, in which we introduce the definition of the G\"ollnitz-Gordon marking of a partition, and give equivalent statements of Theorem \ref{main-thm}. Section 4 is devoted to introducing the insertion operation and the separation operation. These operations allow us to provide a proof of Theorem \ref{equiv-main-thm-2}. In order to show  Theorem \ref{equiv-main-thm}, we  introduce the reduction operation and the dilation operation in Section 5. In light of these operations, we finally give a proof of Theorem \ref{equiv-main-thm} in Section 6.

\section{Proof of Theorem \ref{Gollnitz-3-ee}}

In this section, we will give a proof of Theorem \ref{Gollnitz-3-ee} by using some related results on Bailey pairs. To this end, we  first give a brief review  of Bailey pairs.   For more information on Bailey pairs, see, for example, \cite{Agarwal-Andrews-Bressoud, Andrews-1986, Andrews-2000, Bressoud-Ismail-Stanton-2000, Lovejoy-2004b, Paule-1987,Warnaar-2001}.  Recall that  a pair of sequences $(\alpha_n(a,q),\beta_n(a,q))$ is called a Bailey pair relative to $(a,q)$  if  for  $n\geq 0,$
\begin{equation*}\label{bailey pair}
\beta_n(a,q)=\sum_{r=0}^n\frac{\alpha_r(a,q)}{(q;q)_{n-r}(aq;q)_{n+r}}.
\end{equation*}

The following formulation of Bailey's lemma was given by Andrews \cite{Andrews-1984, Andrews-1986}.

\begin{thm}[Bailey's lemma]\label{BL}
	If $(\alpha_n(a,q),\beta_n(a,q))$ is  a Bailey pair  relative to  $(a,q)$,
	then $(\alpha_n'(a,q),\beta_n'(a,q))$ is also a Bailey pair relative to  $(a,q)$, where
	\begin{equation*}\label{BL-eq}
	\begin{split}
	\alpha_n'(a,q)&=\frac{(\rho_1;q)_n(\rho_2;q)_n}{(aq/\rho_1;q)_n(aq/\rho_2;q)_n}
	\left(\frac{aq}
	{\rho_1\rho_2}\right)^n\alpha_n(a,q),\\
	\beta_n'(a,q)&=
	\sum_{r=0}^{n}\frac{(\rho_1;q)_r(\rho_2;q)_r
		(aq/\rho_1\rho_2;q)_{n-r}}
	{(aq/\rho_1;q)_n(aq/\rho_2;q)_n(q;q)_{n-r}}
	\left(\frac{aq}{\rho_1\rho_2}\right)^r\beta_r(a,q).
	\end{split}
	\end{equation*}
\end{thm}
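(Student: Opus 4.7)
The plan is to derive Theorem~\ref{Gollnitz-3-ee} by iterating Bailey's lemma (Theorem~\ref{BL}) along a carefully tailored Bailey chain whose terminal single-sum closes in product form via the Jacobi triple product. Since the right-hand side of \eqref{Gollnitz-AA1} is identical to that of Bressoud's identity \eqref{Gollnitz-A2}, the natural starting point is the same seed Bailey pair relative to $(1,q^2)$ (or $(q^2,q^2)$, depending on the parity $j$) that underlies Theorem~\ref{Gollnitz-3-e}. What will distinguish the present proof from that of \eqref{Gollnitz-A2} is the choice of parameters at one distinguished step of the chain, which reshuffles the asymmetric factor $(-q^{1-2N_1};q^2)_{N_1}$ of \eqref{Gollnitz-A2} into the novel pairing $(-q^{1-2N_2};q^2)_{N_2}(-q^{1+2N_1};q^2)_\infty$ of \eqref{Gollnitz-AA1}.

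Concretely, I would apply Theorem~\ref{BL} a total of $k-2$ times to this seed. For the iterations introducing the summation variables $N_3, N_4, \ldots, N_{k-1}$, I take the standard limit $\rho_1,\rho_2\to\infty$, so that each step contributes a factor $q^{2N_t^2}/(q^2;q^2)_{N_{t-1}-N_t}$, with the linear shift $q^{2N_t}$ switched on precisely when $t\geq i$. This builds up the denominator chain $\prod_{t=3}^{k-1}(q^2;q^2)_{N_{t-1}-N_t}^{-1}$ and the bulk of the exponent $q^{2(N_3^2+\cdots+N_{k-1}^2+N_i+\cdots+N_{k-1})}$ appearing in \eqref{Gollnitz-AA1}. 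For the one step that introduces $N_2$, however, I would keep $\rho_2$ finite and specialized to an appropriate negative power of $q$, so that the factor $(\rho_2;q)_r$ in the formula for $\beta'_r$ of Theorem~\ref{BL} collapses to $(-q^{1-2N_2};q^2)_{N_2}$, while $\rho_1\to\infty$; the surviving $(aq/\rho_1\rho_2;q)_{n-r}$ piece, after passage to the limit $n\to\infty$ in the closing single-sum, becomes the infinite product $(-q^{1+2N_1};q^2)_\infty$.

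Once the chain has been run, the terminal sum $\lim_{n\to\infty}\sum_{r\leq n}q^{2r^2}\alpha_r/(q^2;q^2)_{n-r}$ is recognized, via the Jacobi triple product identity, as the theta quotient on the right-hand side of \eqref{Gollnitz-AA1}; this step is identical to the closing manipulation in Bressoud's proof of \eqref{Gollnitz-A2}. The main obstacle will be the bookkeeping at the distinguished step of the chain: one has to check that, after the limits in $\rho_1$ and in $n$ have both been taken, the accumulated $q$-exponent together with the auxiliary factors $(aq/\rho_1)_n^{-1}$ and $(aq/\rho_2)_n^{-1}$ collapse to precisely the summand of \eqref{Gollnitz-AA1}, with no stray powers of $q$ or extra theta factors. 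To reconcile the exponent produced by the specialization with the $q^{2N_1^2}$ and $q^{2N_2^2}$ contributions supplied by the rest of the chain, I expect to invoke the reflection identity $(-q^{1-2N};q^2)_N = q^{-N^2}(-q;q^2)_N$, which converts between the two natural normalizations of the shifted factorial appearing in \eqref{Gollnitz-A2} and \eqref{Gollnitz-AA1}.
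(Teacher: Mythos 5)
Your proposal does not prove the statement it is supposed to prove. The statement in question is Bailey's lemma itself (Theorem~\ref{BL}): the assertion that if $(\alpha_n(a,q),\beta_n(a,q))$ is a Bailey pair, then the transformed pair $(\alpha_n'(a,q),\beta_n'(a,q))$, defined through the parameters $\rho_1,\rho_2$, is again a Bailey pair. What you have written is instead a strategy for deriving Theorem~\ref{Gollnitz-3-ee} \emph{by iterating} Bailey's lemma along a Bailey chain. Everywhere in your sketch the lemma is taken for granted as the engine driving each step of the chain; nowhere do you verify that $\beta_n'(a,q)=\sum_{r=0}^n \alpha_r'(a,q)/\bigl((q;q)_{n-r}(aq;q)_{n+r}\bigr)$, which is the entire content of the theorem. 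Relative to the stated goal this is circular: you cannot establish Bailey's lemma by describing applications of Bailey's lemma.

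For the record, a proof of Theorem~\ref{BL} proceeds by substituting the defining relation $\beta_r(a,q)=\sum_{m=0}^r \alpha_m(a,q)/\bigl((q;q)_{r-m}(aq;q)_{r+m}\bigr)$ into the given formula for $\beta_n'(a,q)$, interchanging the order of summation, and evaluating the resulting inner sum over $r$ by the $q$-Pfaff--Saalsch\"utz summation; the inner sum collapses to exactly the factor needed to recognize $\beta_n'(a,q)=\sum_{m=0}^n \alpha_m'(a,q)/\bigl((q;q)_{n-m}(aq;q)_{n+m}\bigr)$. The paper itself does not reprove this: it quotes the lemma from Andrews. Separately, even read as a proposal for Theorem~\ref{Gollnitz-3-ee}, your sketch diverges from the paper's actual route, which starts from the unit Bailey pair and the pair E(2) of Slater, applies Corollary~\ref{corBL} (built from Lemma~\ref{BL-1} and Proposition~\ref{Bl-Bp}) repeatedly, inserts Lemma~\ref{BL-2} at the step producing the $(-q^{1/2};q)_{N_2}$ factor, and only then passes to the limit and applies Jacobi's triple product; the key structural difference from Bressoud's proof is the \emph{order} in which Lemma~\ref{BL-2} is applied within the chain, not a finite specialization of $\rho_2$ at the $N_2$ step as you suggest.
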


When $\rho_1,\rho_2\rightarrow \infty$ and $\rho_1\rightarrow \infty,\rho_2=-\sqrt{aq}$, Bailey's lemma reduces to the following two forms, which play an important role in proof of Theorem \ref{Gollnitz-3-e}.

\begin{lem}\label{BL-1}
	If $(\alpha_n(a,q),\beta_n(a,q))$ is  a Bailey pair  relative to  $(a,q)$,
	then $(\alpha_n'(a,q),\beta_n'(a,q))$ is also a Bailey pair relative to  $(a,q)$, where
	\begin{equation*}\label{BL-1-eq}
	\begin{split}
	\alpha_n'(a,q)&=a^nq^{n^2}\alpha_n(a,q),\\
	\beta_n'(a,q)&=
	\sum_{r=0}^{n}\frac{a^rq^{r^2}}
	{(q;q)_{n-r}}
	\beta_r(a,q).
	\end{split}
	\end{equation*}
\end{lem}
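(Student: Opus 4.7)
The plan is to obtain Lemma \ref{BL-1} as a direct specialization of Bailey's lemma (Theorem \ref{BL}) by letting $\rho_1,\rho_2\to\infty$ in the formulas for $\alpha_n'(a,q)$ and $\beta_n'(a,q)$. Since both expressions involve only finitely many $q$-Pochhammer factors and a finite sum, the limit can be taken term by term, so the whole argument reduces to a clean bookkeeping of powers of $q$.

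The key ingredient is the elementary asymptotic
$$\lim_{\rho\to\infty}\frac{(\rho;q)_n}{\rho^n}=(-1)^{n}q^{n(n-1)/2},$$
obtained by factoring $-\rho q^i$ from each term of $\prod_{i=0}^{n-1}(1-\rho q^{i})$, together with the obvious $\lim_{\rho\to\infty}(aq/\rho;q)_n=1$. Applying this to the expression for $\alpha_n'$ in Theorem \ref{BL}, the combination $(\rho_1;q)_n(\rho_2;q)_n/(\rho_1\rho_2)^{n}$ tends to $q^{n(n-1)}$, the two denominator factors $(aq/\rho_j;q)_n$ tend to $1$, and the surviving $(aq)^n$ combines to give $a^n q^{n(n-1)+n}=a^n q^{n^2}$, as required. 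For $\beta_n'$, the same asymptotics applied inside the summand (for each fixed index $r$) show that the $\rho_i$-dependent part collapses to $a^r q^{r^2}$, while $(aq/\rho_1\rho_2;q)_{n-r}$ and $(aq/\rho_j;q)_n$ all tend to $1$, yielding exactly $\displaystyle\sum_{r=0}^{n}\frac{a^r q^{r^2}}{(q;q)_{n-r}}\beta_r(a,q)$.

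There is no substantive obstacle here: the sum is finite, so no dominated-convergence argument is needed, and the only care required is tracking the $q$-exponents when combining the two $\rho$-limits, in particular verifying that $2\cdot n(n-1)/2+n=n^2$ (and the analogous identity with $r$ in place of $n$ inside the sum for $\beta_n'$). Hence Lemma \ref{BL-1} follows immediately from Theorem \ref{BL} upon letting $\rho_1,\rho_2\to\infty$.
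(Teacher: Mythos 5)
Your proposal is correct and follows exactly the route the paper indicates: the paper states Lemma \ref{BL-1} as the $\rho_1,\rho_2\to\infty$ specialization of Theorem \ref{BL} without writing out the limit, and your computation (using $(\rho;q)_n/\rho^n\to(-1)^n q^{n(n-1)/2}$, $(aq/\rho;q)_n\to 1$, and term-by-term limits in the finite sum) is the standard verification of that reduction. The exponent bookkeeping $n(n-1)+n=n^2$ checks out, so nothing further is needed.
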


\begin{lem}\label{BL-2}
	If $(\alpha_n(a,q),\beta_n(a,q))$ is  a Bailey pair  relative to  $(a,q)$,
	then $(\alpha_n'(a,q),\beta_n'(a,q))$ is also a Bailey pair relative to  $(a,q)$, where
	\begin{align*}
	\alpha_n'(a,q)&=a^{n/2}q^{n^2/2}\alpha_n(a,q),\\
	\beta_n'(a,q)&=\sum_{r=0}^n
	\frac{(-\sqrt{aq};q)_r}{(-\sqrt{aq};q)_n(q;q)_{n-r}}a^{r/2}q^{r^2/2}
	\beta_r(a,q).
	\end{align*}
\end{lem}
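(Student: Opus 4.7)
The plan is to derive Lemma BL-2 as a specialization of Bailey's lemma (Theorem BL) via the substitution $\rho_1 \to \infty$ and $\rho_2 = -\sqrt{aq}$. This is a standard ``chain-limit'' argument: one sets the free parameter $\rho_2$ to a value that forces a cancellation between the factors $(\rho_2;q)_n$ and $(aq/\rho_2;q)_n$, then lets the other parameter tend to infinity after extracting the dominant asymptotics.

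First I would substitute $\rho_2 = -\sqrt{aq}$ in both formulas of Theorem BL. A direct check gives $aq/\rho_2 = -\sqrt{aq}$, so $(aq/\rho_2;q)_n = (-\sqrt{aq};q)_n$. In the $\alpha_n'$-formula this factor cancels with $(\rho_2;q)_n = (-\sqrt{aq};q)_n$ in the numerator, while in the $\beta_n'$-formula it becomes the denominator factor $(-\sqrt{aq};q)_n$ appearing in the statement of Lemma BL-2. The factor $(\rho_2;q)_r = (-\sqrt{aq};q)_r$ in the $\beta_n'$-formula remains as the numerator $q$-Pochhammer.

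Next I would take the limit $\rho_1 \to \infty$. Using the asymptotic
\[
(\rho_1;q)_m = \prod_{i=0}^{m-1}(1-\rho_1 q^i) \sim (-\rho_1)^m q^{\binom{m}{2}} \quad \text{as } \rho_1\to\infty,
\]
together with $(aq/\rho_1;q)_n \to 1$ and $(aq/(\rho_1\rho_2);q)_{n-r} \to 1$, the crucial combination in both formulas becomes
\[
(\rho_1;q)_m\left(\frac{aq}{\rho_1\rho_2}\right)^m \sim \left(-\frac{aq}{\rho_2}\right)^m q^{\binom{m}{2}} = (\sqrt{aq})^m q^{m(m-1)/2} = a^{m/2} q^{m^2/2},
\]
where $-aq/\rho_2 = \sqrt{aq}$ since $\rho_2 = -\sqrt{aq}$. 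Setting $m=n$ yields $\alpha_n'(a,q) = a^{n/2} q^{n^2/2} \alpha_n(a,q)$, and setting $m=r$ inside the sum, together with the surviving factors $(-\sqrt{aq};q)_r$, $(-\sqrt{aq};q)_n^{-1}$, and $(q;q)_{n-r}^{-1}$, yields the claimed formula
\[
\beta_n'(a,q) = \sum_{r=0}^{n} \frac{(-\sqrt{aq};q)_r}{(-\sqrt{aq};q)_n (q;q)_{n-r}} a^{r/2} q^{r^2/2} \beta_r(a,q).
\]

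The only subtle point is justifying the interchange of the limit $\rho_1 \to \infty$ with the (finite) sum in the $\beta_n'$-formula; since $n$ is fixed and the sum is finite, this is immediate. I do not anticipate a genuine obstacle — the proof is essentially bookkeeping of Pochhammer symbols and a careful accounting of the $\rho_1^{\pm m}$ cancellations — but the step requiring most care is verifying the sign and power of $q$ in the asymptotic $(\rho_1;q)_m (aq/\rho_1\rho_2)^m \to a^{m/2}q^{m^2/2}$, since a sign error there would invert the role of $-\sqrt{aq}$ versus $+\sqrt{aq}$ throughout.
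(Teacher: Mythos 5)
Your proposal is correct and follows exactly the route the paper takes: the paper simply asserts that Lemma \ref{BL-2} is the $\rho_1\to\infty$, $\rho_2=-\sqrt{aq}$ specialization of Theorem \ref{BL}, and your Pochhammer bookkeeping (the cancellation $(\rho_2;q)_n/(aq/\rho_2;q)_n=1$ and the asymptotic $(\rho_1;q)_m(aq/\rho_1\rho_2)^m\to a^{m/2}q^{m^2/2}$) fills in the details correctly. No gaps.
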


By successively using Lemma \ref{BL-1} and the following proposition, together with Lemma \ref{BL-2}, Bressoud, Ismail and Stanton found a proof of Theorem \ref{Gollnitz-3-e}.

\begin{prop}\label{Bl-Bp}{\rm  \cite[Proposition 4.1]{Bressoud-Ismail-Stanton-2000}}
	If $(\alpha_n(1,q),\beta_n(1,q))$ is a Bailey pair relative to $(1,q)$, where
	\[\alpha_n(1,q)=\left\{
	\begin{array}{ll}
	1, & \hbox{if $n=0$}, \\[5pt]
	(-1)^nq^{An^2}(q^{(A-1)n}+q^{-(A-1)n}), & \hbox{if $n\geq 1$,}
	\end{array}
	\right.
	\]
	then $(\alpha_n'(1,q),\beta_n'(1,q))$ is also a Bailey pair relative to $(1,q)$, where
	\begin{align*}
	\alpha_n'(1,q)&=\left\{
	\begin{array}{ll}
	1, & \hbox{if $n=0$}, \\[3pt]
	(-1)^nq^{An^2}(q^{An}+q^{-An}), & \hbox{if $n\geq 1$,}
	\end{array}
	\right. \\[10pt]
	\beta_n'(1,q)&=q^n\beta_n(1,q).
	\end{align*}
\end{prop}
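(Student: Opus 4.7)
My plan is to verify the stated Bailey-pair relation directly from the definition. With $a=1$ the Bailey-pair condition reads
\[\beta_n(1,q)=\sum_{r=0}^{n}\frac{\alpha_r(1,q)}{(q;q)_{n-r}(q;q)_{n+r}},\]
so the claimed relation $\beta_n'(1,q)=q^n\beta_n(1,q)$ will follow, term by term in $\alpha_r$, once I verify
\[\sum_{r=0}^{n}\frac{q^n\alpha_r(1,q)-\alpha_r'(1,q)}{(q;q)_{n-r}(q;q)_{n+r}}=0 \qquad \text{for every } n\ge 0.\]
The strategy is to exhibit the left-hand side as a telescoping sum.

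The crux is a clean decomposition of the numerator. For $r\ge 1$, writing out the two versions of $\alpha_r$ and regrouping gives
\[q^n\alpha_r(1,q)-\alpha_r'(1,q)=(-1)^rq^{Ar^2}\bigl[q^{Ar}(q^{n-r}-1)+q^{-Ar}(q^{n+r}-1)\bigr].\]
Dividing by $(q;q)_{n-r}(q;q)_{n+r}$ and using $(q;q)_{n\pm r}=(1-q^{n\pm r})(q;q)_{n\pm r-1}$ (with the standard convention $1/(q;q)_{-1}=0$) converts the two pieces into $T_{r+1}$ and $-T_r$, where
\[T_r:=\frac{(-1)^rq^{Ar(r-1)}}{(q;q)_{n-r}(q;q)_{n+r-1}}.\]
For $r=0$ the numerator reduces to $q^n-1$, and the same manipulation produces exactly $T_1$.

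Summing from $r=0$ to $n$ therefore telescopes to
\[T_1+\sum_{r=1}^{n}(T_{r+1}-T_r)=T_{n+1},\]
which vanishes because of the factor $1/(q;q)_{-1}=0$. This proves the required identity, and hence the proposition.

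I expect the main obstacle to be identifying the correct telescoping summand $T_r$. The isolated $r=0$ term contributes only $T_1$ (not a difference), so the decomposition must be arranged so that this $T_1$ exactly plugs the missing lower boundary of the telescoped sum, while the upper boundary at $r=n$ is killed by the $(q;q)_{-1}$ convention; balancing both endpoints is the only real subtlety. Once the shape of $T_r$ is guessed, the rest is routine $q$-Pochhammer bookkeeping.
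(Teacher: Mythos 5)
Your argument is correct. I checked the decomposition: for $r\ge 1$,
\[
q^n\alpha_r(1,q)-\alpha_r'(1,q)=(-1)^rq^{Ar^2}\bigl[q^{Ar}(q^{n-r}-1)+q^{-Ar}(q^{n+r}-1)\bigr],
\]
and dividing by $(q;q)_{n-r}(q;q)_{n+r}$ while cancelling $(1-q^{n-r})$ against the first bracket and $(1-q^{n+r})$ against the second does yield $T_{r+1}-T_r$ with $T_r=(-1)^rq^{Ar(r-1)}/\bigl((q;q)_{n-r}(q;q)_{n+r-1}\bigr)$; the $r=0$ term gives exactly $T_1$, and the surviving boundary term $T_{n+1}$ vanishes by the convention $1/(q;q)_{-1}=0$ (the edge cases $r=n$ and $n=0$ are consistent with this, since the offending numerator factor $q^{n-r}-1$ is then literally zero). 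Note, however, that the paper does not prove this proposition at all: it is quoted verbatim from Bressoud--Ismail--Stanton \cite[Proposition~4.1]{Bressoud-Ismail-Stanton-2000} and used as a black box in the derivation of Corollary~\ref{corBL}. So your contribution is a self-contained elementary verification of the cited result rather than an alternative to an argument in this paper; it is a perfectly valid (and essentially the standard) way to establish the identity, requiring nothing beyond the definition of a Bailey pair and $q$-Pochhammer bookkeeping.
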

Using Lemma \ref{BL-1} followed by Proposition \ref{Bl-Bp}, we
derive the following corollary.
\begin{core}\label{corBL}
	If  $(\alpha_n(1,q),\beta_n(1,q))$ is a Bailey pair relative to $(1,q)$, where
\[\alpha_n(1,q)=\left\{
\begin{array}{ll}
1, & \hbox{if $n=0$}, \\[3pt]
(-1)^nq^{An^2}(q^{An}+q^{-An}), & \hbox{if $n\geq 1$,}
\end{array}
\right.
\]
then $(\alpha_n'(1,q),\beta_n'(1,q))$ is also a Bailey pair relative to $(1,q)$, where
\begin{align*}
\alpha_n'(1,q)&=\left\{
\begin{array}{ll}
1, & \hbox{if $n=0$}, \\[5pt]
(-1)^nq^{(A+1)n^2}(q^{(A+1)n}+q^{-(A+1)n}), & \hbox{if $n\geq 1$,}
\end{array}
\right. \\[10pt]
\beta_n'(1,q)&=q^n\sum_{r=0}^{n}\frac{q^{r^2}}
{(q;q)_{n-r}}
\beta_r(1,q).
	\end{align*}
\end{core}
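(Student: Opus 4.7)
The plan is to verify the corollary by straightforwardly composing the two transformations indicated in the hint: first apply Lemma \ref{BL-1} with $a=1$ to the input Bailey pair $(\alpha_n(1,q),\beta_n(1,q))$, and then apply Proposition \ref{Bl-Bp} to the resulting intermediate Bailey pair, but with the parameter $A$ in that proposition replaced by $A+1$. Since each of these steps is a Bailey-pair transformation, the composition produces a Bailey pair, and it only remains to check that the algebra matches the claimed expressions for $\alpha_n'(1,q)$ and $\beta_n'(1,q)$.

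First I would record the output of Lemma \ref{BL-1} with $a=1$. Writing the intermediate pair as $(\tilde{\alpha}_n(1,q),\tilde{\beta}_n(1,q))$, one has $\tilde{\alpha}_n(1,q)=q^{n^2}\alpha_n(1,q)$ and
\[
\tilde{\beta}_n(1,q)=\sum_{r=0}^{n}\frac{q^{r^2}}{(q;q)_{n-r}}\beta_r(1,q).
\]
Substituting the hypothesized form of $\alpha_n(1,q)$ gives $\tilde{\alpha}_0(1,q)=1$ and, for $n\geq 1$,
\[
\tilde{\alpha}_n(1,q)=(-1)^n q^{(A+1)n^2}\bigl(q^{An}+q^{-An}\bigr).
\]
The key observation is that this is exactly the input shape required by Proposition \ref{Bl-Bp} with $A$ replaced by $A+1$, because the proposition asks for $(-1)^n q^{Bn^2}(q^{(B-1)n}+q^{-(B-1)n})$ with $B=A+1$, and $(B-1)n = An$.

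With this identification in hand, applying Proposition \ref{Bl-Bp} to $(\tilde{\alpha}_n(1,q),\tilde{\beta}_n(1,q))$ with parameter $A+1$ immediately produces a Bailey pair whose $\alpha$-part equals $1$ at $n=0$ and $(-1)^n q^{(A+1)n^2}(q^{(A+1)n}+q^{-(A+1)n})$ for $n\geq 1$, in exact agreement with $\alpha_n'(1,q)$. The proposition also multiplies the $\beta$-part by $q^n$, yielding $q^n\tilde{\beta}_n(1,q) = q^n\sum_{r=0}^{n}\frac{q^{r^2}}{(q;q)_{n-r}}\beta_r(1,q)$, which is precisely $\beta_n'(1,q)$. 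There is essentially no obstacle in the argument: the only point to be careful about is the index shift $A\mapsto A+1$ when matching the hypothesis of Proposition \ref{Bl-Bp}, and this is immediate from a direct comparison of exponents.
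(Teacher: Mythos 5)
Your proposal is correct and follows exactly the paper's route: the paper derives Corollary \ref{corBL} precisely by applying Lemma \ref{BL-1} (with $a=1$) and then Proposition \ref{Bl-Bp} with the parameter shifted to $A+1$, which is the composition you carry out and verify. The exponent bookkeeping ($q^{n^2}\cdot q^{An^2}=q^{(A+1)n^2}$ and $(A+1)-1=A$ matching the proposition's hypothesis) is the only content of the argument, and you have it right.
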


Now we are ready to show Theorem \ref{Gollnitz-3-ee}.

\begin{proof}[Proof of Theorem \ref{Gollnitz-3-ee}]
We begin with the  Bailey pair  $(\alpha^{(0)}_n(1,q),\beta^{(0)}_n(1,q))$ relative to $(1,q)$, where
\begin{align*}
&\alpha^{(0)}_n(1,q)=\left\{
\begin{array}{ll}
1, & \hbox{if $n=0$,}\\[5pt]
(-1)^nq^{jn^2/2}(q^{-jn/2}+q^{jn/2})
, & \hbox{if $n\geq1$.}
\end{array}
\right.
\end{align*}
 It is easy to see that when $j=1$,
\begin{align*}
&\beta^{(0)}_n(1,q)=\left\{
\begin{array}{ll}
1, & \hbox{if $n=0$,}\\[5pt]
0, & \hbox{if $n\geq1$.}
\end{array}
\right.
\end{align*}
The Bailey pair $(\alpha^{(0)}_n(1,q),\beta^{(0)}_n(1,q))$ corresponds to the unit  Bailey pair \cite[H(17)]{Slater-1952}. And when $j=0$,
$$\beta^{(0)}_n(1,q)=\frac{(-1)^n}{(q^2;q^2)_n}.$$
It corresponds to the Bailey pair   \cite[E(2)]{Slater-1952}.

Applying Corollary \ref{corBL} to $(\alpha^{(0)}_n(1,q),\beta^{(0)}_n(1,q))$, we get
\begin{align*}
&\alpha^{(1)}_n(1,q)=\left\{
\begin{array}{ll}
1, & \hbox{if $n=0$}, \\[5pt]
(-1)^nq^{(j+2)n^2/2}(q^{-(j+2)n/2}+q^{(j+2)n/2}), & \hbox{if $n\geq1$,}
\end{array}
\right.\\[5pt]
&\beta^{(1)}_n(1,q)=q^n\sum_{N_k=0}^{n}\frac{q^{N_k^2}}
{(q;q)_{n-N_k}}
\beta^{(0)}_{N_k}(1,q).
\end{align*}
It is easy to check that
$$\beta^{(1)}_n(1,q)=
\frac{q^n}{(q^{2-j};q^{2-j})_n},$$
in which the $j=0$ case follows from the $q$-Chu-Vandermonde formula \cite[Eq. (II.7)]{Gasper-Rahman-1990} with $c=-q$ and $a\rightarrow \infty$,
\begin{align}\label{qCV}
\sum_{r=0}^n\frac{(a;q)_r(q^{-n};q)_r}{(c;q)_r(q;q)_r}\left(\frac{cq^n}{a}\right)^r=\frac{(c/a;q)_n}{(c;q)_n}.
\end{align}
Applying  Corollary \ref{corBL} $k-i-1$  times to $(\alpha^{(1)}_n(1,q),\beta^{(1)}_n(1,q))$ yields the Bailey pair $(\alpha^{(k-i)}_n(1,q),\beta^{(k-i)}_n(1,q))$, where
\begin{align*}
&\alpha^{(k-i)}_n(1,q)=\left\{
\begin{array}{ll}
1, & \hbox{if $n=0$}, \\[5pt]
(-1)^nq^{\frac{2k-2i+j}{2}n^2}(q^{-\frac{2k-2i+j}{2}n}+q^{\frac{2k-2i+j}{2}n}), & \hbox{if $n\geq1$,}
\end{array}
\right.\\[10pt]
&\beta^{(k-i)}_n(1,q)=q^n\sum_{n\geq N_{i+1}\geq \cdots\geq N_{k-1}\geq 0}\frac{q^{N^2_{i+1}+\cdots+N^2_{k-1}+N_{i+1}+\cdots+N_{k-1}}}{(q;q)_{n-N_{i+1}}(q;q)_{N_{i+1}-N_{i+2}}\cdots(q^{2-j};q^{2-j})_{N_{k-1}}}.
\end{align*}

Next we substitute the Bailey pair
$(\alpha^{(k-i)}_n(1,q),\beta^{(k-i)}_n(1,q))$ into Lemma
\ref{BL-1} to get the following Bailey pair
\begin{equation*}
\begin{split}
&\alpha^{(k-i+1)}_n(1,q)=\left\{
\begin{array}{ll}
1, & \hbox{if $n=0$}, \\[5pt]
(-1)^nq^{\frac{2k-2i+j+2}{2}n^2}(q^{-\frac{2k-2i+j}{2}n}+q^{\frac{2k-2i+j}{2}n}), & \hbox{if $n\geq1$,}
\end{array}
\right.\\[10pt]
&\beta^{(k-i+1)}_n(1,q)=\sum_{n\geq N_{i}\geq \cdots\geq N_{k-1}\geq 0}\frac{q^{N^2_{i}+\cdots+N^2_{k-1}+N_{i}+\cdots+N_{k-1}}}{(q;q)_{n-N_{i}}(q;q)_{N_{i}-N_{i+1}}\cdots(q^{2-j};q^{2-j})_{N_{k-1}}}.
\end{split}
\end{equation*}

Applying Lemma
\ref{BL-1} to the Bailey pair
$(\alpha^{(k-i+1)}_n(1,q),\beta^{(k-i+1)}_n(1,q))$ $i-3$ times, we will obtain the following Bailey pair
\begin{equation*}
\begin{split}
&\alpha^{(k-2)}_n(1,q)=\left\{
\begin{array}{ll}
1, & \hbox{if $n=0$}, \\[5pt]
(-1)^nq^{\frac{2k+j-4}{2}n^2}(q^{-\frac{2k-2i+j}{2}n}+q^{\frac{2k-2i+j}{2}n}), & \hbox{if $n\geq1$,}
\end{array}
\right.\\[10pt]
&\beta^{(k-2)}_n(1,q)=\sum_{n\geq N_{3}\geq \cdots\geq N_{k-1}\geq 0}\frac{q^{N^2_{3}+\cdots+N^2_{k-1}+N_{i}+\cdots+N_{k-1}}}{(q;q)_{n-N_{3}}(q;q)_{N_{3}-N_{4}}\cdots(q^{2-j};q^{2-j})_{N_{k-1}}}.
\end{split}
\end{equation*}

Plugging  $(\alpha^{(k-2)}_n(1,q),\beta^{(k-2)}_n(1,q))$ into Lemma \ref{BL-2}, we get the Bailey pair $(\alpha^{(k-1)}_n(1,q),$\\$\beta^{(k-1)}_n(1,q))$, where
\begin{equation*}
\begin{split}
&\alpha^{(k-1)}_n(1,q)=\left\{
\begin{array}{ll}
1, & \hbox{if $n=0$}, \\[5pt]
(-1)^nq^{\frac{2k+j-3}{2}n^2}(q^{-\frac{2k-2i+j}{2}n}+q^{\frac{2k-2i+j}{2}n}), & \hbox{if $n\geq1$,}
\end{array}
\right.\\[10pt]
&\beta^{(k-1)}_n(1,q)=\sum_{n\geq N_{2}\geq \cdots\geq N_{k-1}\geq 0}\frac{q^{N_2^2 /2+N^2_{3}+\cdots+N^2_{k-1}+N_{i}+\cdots+N_{k-1}}(-q^{1/2};q)_{N_2}}
{(-q^{1/2};q)_n(q;q)_{n-N_{2}}(q;q)_{N_{2}-N_{3}}\cdots(q^{2-j};q^{2-j})_{N_{k-1}}}.
\end{split}
\end{equation*}
Then we apply Lemma \ref{BL-1} to
the Bailey pair $(\alpha^{(k-1)}_n(1,q),\beta^{(k-1)}_n(1,q))$. This gives
\begin{equation*}
\begin{split}
&\alpha^{(k)}_n(1,q)=\left\{
\begin{array}{ll}
1, & \hbox{if $n=0$}, \\[5pt]
(-1)^nq^{\frac{2k+j-1}{2}n^2}(q^{-\frac{2k-2i+j}{2}n}+q^{\frac{2k-2i+j}{2}n}), & \hbox{if $n\geq1$,}
\end{array}
\right.\\[10pt]
&\beta^{(k)}_n(1,q)=\sum_{n\geq N_{1}\geq \cdots\geq N_{k-1}\geq 0}\frac{q^{N_1^2+N_2^2 /2+N^2_{3}+\cdots+N^2_{k-1}+N_{i}+\cdots+N_{k-1}}(-q^{1/2};q)_{N_2}}
{(q;q)_{n-N_{1}}(q;q)_{N_{1}-N_{2}}\cdots(q^{2-j};q^{2-j})_{N_{k-1}}(-q^{1/2};q)_{N_1}}.
\end{split}
\end{equation*}

According to the definition of a Bailey pair, we have
\begin{equation*}
\beta^{(k)}_n(1,q)=\sum_{r=0}^n\frac{\alpha^{(k)}_r(1,q)}{(q;q)_{n-r}(q;q)_{n+r}}.
\end{equation*}
Letting $n\rightarrow \infty$ and multiplying both sides by $(-q^{1/2};q)_\infty(q;q)_\infty$, we obtain
\begin{align}\nonumber
&\sum_{ N_{1}\geq \cdots\geq N_{k-1}\geq 0}\frac{q^{N_1^2+N_2^2 /2+N^2_{3}+\cdots+N^2_{k-1}+N_{i}+\cdots+N_{k-1}}(-q^{1/2};q)_{N_2}(-q^{1/2};q)_\infty}
{(q;q)_{N_{1}-N_{2}}\cdots(q^{2-j};q^{2-j})_{N_{k-1}}(-q^{1/2};q)_{N_1}}\\ \nonumber
&\qquad=\frac{(-q^{1/2};q)_\infty}{(q;q)_\infty}\left(1+\sum_{n=1}^\infty
(-1)^nq^{\frac{2k+j-1}{2}n^2}(q^{-\frac{2k-2i+j}{2}n}+q^{\frac{2k-2i+j}{2}n})\right)\\ \label{vv1}
&\qquad=\frac{(-q^{1/2};q)_\infty(q^{i-1/2},q^{2k+j-i-1/2},q^{2k+j-1};q^{2k+j-1})_\infty}{(q;q)_\infty},
\end{align}
where the last equality follows from  Jacobi's triple product identity.

Observing that
$$\frac{(-q^{1/2};q)_\infty}{(q;q)_\infty}=\frac{(q;q^2)_\infty}{(q^{1/2};q^{1/2})_\infty},\ \   \frac{(-q^{1/2};q)_\infty}{(-q^{1/2};q)_{N_1}}=(-q^{(2N_1+1)/2};q)_\infty,$$
and
$$(-q^{1/2};q)_{N_2}=q^{N_2^2/2}(-q^{(1-2N_2)/2};q)_{N_2}, $$
by replacing $q$ by $q^2$ we derive  \eqref{Gollnitz-AA1}. This
completes the proof.
\end{proof}

\begin{rem} We point out some special cases of \eqref{Gollnitz-AA1}. For $k=i$, the Bailey pair
$(\alpha^{(k-i)}_n(1,q),\beta^{(k-i)}_n(1,q))$ is actually $(\alpha^{(0)}_n(1,q),\beta^{(0)}_n(1,q))$. Thus following the remainder process in the above proof, we find that  \eqref{Gollnitz-AA1} reduces to the following forms.
\begin{itemize}
 \item[]Case 1.  $k=i=2$ and $j=1$
 \begin{align*}
 &\sum_{N_1\geq 0}\frac{q^{2N_1^2}(-q^{1+2N_1};q^2)_\infty}{ (q^{2};q^{2})_{N_{1}}}
 =\frac{(q^2;q^4)_\infty(q^{3},q^{5},q^8;q^{8})_\infty}
 {(q;q)_\infty}.
 \end{align*}
 \item[] Case 2. $k=i\geq3$
 \begin{align*}
 &\sum_{N_1\geq \cdots\geq N_{k-1}\geq0}\frac{q^{2(N_1^2+\cdots+N_{k-1}^2)}(-q^{1-2N_2};q^2)_{N_2}(-q^{1+2N_1};q^2)_\infty}{(q^2;q^2)_{N_1-N_2}\cdots(q^2;q^2)_{N_{k-2}-N_{k-1}} (q^{4-2j};q^{4-2j})_{N_{k-1}}}\\[5pt]
 &=\frac{(q^2;q^4)_\infty(q^{2k-1},q^{2k-1+2j},q^{4k-2+2j};q^{4k-2+2j})_\infty}
 {(q;q)_\infty}.
 \end{align*}
\end{itemize}

In addition, for $k=i=2$ and $j=0$, the case excluded by Theorem \ref{Gollnitz-3-ee}, using the above process we can also derive the following identity : 
\begin{align*}
&\sum_{N_1\geq 0}\frac{q^{2N_1^2}(q;q^2)_{N_1}(-q^{1+2N_1};q^2)_\infty}{ (q^{4};q^{4})_{N_{1}}}
=\frac{(q^2;q^4)_\infty(q^{3},q^{3},q^6;q^{6})_\infty}
{(q;q)_\infty}.
\end{align*}

\end{rem}

\section{The G\"ollnitz-Gordon marking}

In this section, we introduce the notion of the G\"ollnitz-Gordon marking of a partition. Furthermore, we investigate some attributes of a partition. And at the end of this section we give  an explanation of Theorem \ref{main-thm} on the combinatorial side, which is stated in Theorems \ref{equiv-main-thm-2} and \ref{equiv-main-thm}.

\begin{defi}[G\"ollnitz-Gordon  marking]\label{Gollnitz-Gordon-marking}
The G\"ollnitz-Gordon  marking $GG(\lambda)$ of a partition $\lambda=(\lambda_1,\lambda_2,\ldots,\lambda_\ell)$ is an assignment of positive integers {\rm(}marks{\rm)} to the parts of  $\lambda$ from  smallest to  largest such that the marks are as small as possible subject to the condition that for $1\leq s\leq \ell$,
the integer assigned to $\lambda_s$ is different from the integers assigned to the parts $\lambda_g$ such that $g>s$ and $\lambda_s-\lambda_g\leq 2$ with strict inequality if $\lambda_s$ is odd.
\end{defi}
For example, the G\"ollnitz-Gordon marking of
\[\lambda=(18,15,14,14,12,11,10,8,8,6,4,3,2,1)\]
 is
\[GG(\lambda)=(18_1,15_3,14_2,14_1,12_3,11_2,10_1,8_3,8_2,6_1,4_3,3_1,2_2,1_1).\]
The G\"ollnitz-Gordon marking of a partition can be represented by an array, where the column indicates the size of a part and the row (counted from bottom to top) indicates the mark, so the G\"ollnitz-Gordon marking of $\lambda$ above would be:
\begin{equation}\label{example-new-1}
GG(\lambda)=\left[
\begin{array}{cccccccccccccccccccc}
 &&&4&&&&8&&&&12&&&15&\\
 &2&&&&&&8&&&11&&&14&&&&\\
{1}&&3&&&6&&&&10&&&&14&&&&18
\end{array}
\right].
\end{equation}
Let $N_r(\lambda)$ (or $N_r$ for short) denote the number of  parts in the $r$-th row of $GG(\lambda)$. From the definition of the G\"ollnitz-Gordon marking, it is not hard  to show that $N_1\geq N_2\geq\cdots$. By definition, we have $N_1(\lambda)=6$, $N_2(\lambda)=4$ and $N_3(\lambda)=4$. 

 For $k\geq i\geq1$, a partition $\lambda$ is counted by ${{C}}_1(k,i;n)$  if and only if  the marks  of $1$ and $2$  are not exceed to $i-1$, the marks of the odd parts are at most 1, and there are at most $k-1$ rows in $GG(\lambda)$.

For $k\geq i\geq1$ and $j=0$ or $1$, let $\mathbb{C}_j(k,i;n)$ denote the set of partitions counted by $C_j(k,i;n)$. Define
\[\mathbb{C}_j(k,i)=\bigcup_{n\geq0}\mathbb{C}_j(k,i;n).\]
We wish to give a criterion to determine whether a partition in $\mathbb{C}_1(k,i)$ also  belongs to $\mathbb{C}_0(k,i)$.
Let $\lambda=(\lambda_1,\lambda_2,\ldots, \lambda_\ell)$ be a partition in $\mathbb{C}_1(k,i)$. From the definitions of $\mathbb{C}_1(k,i)$ and $\mathbb{C}_0(k,i)$, we see that $\lambda$  belongs to $\mathbb{C}_0(k,i)$ if and only if $\lambda$ satisfies the item (4) in Theorem \ref{Gordon-Gollnitz-even}, that is, for $1\leq s\leq \ell-k+2$, if
\begin{equation}\label{diff-cond-oo}
\lambda_s\leq\lambda_{s+k-2}+2 \text{ with strict inequality if $\lambda_s$ is odd},
\end{equation}
then
\begin{equation}\label{cong-cond-o}
[\lambda_s/2]+\cdots+[\lambda_{s+k-2}/2]\equiv i-1+{O}_\lambda(\lambda_s)\pmod{2}.
\end{equation}
For the consecutive parts $\lambda_s,\lambda_{s+1}, \ldots,  \lambda_{s+k-2}$ of $\lambda$, if such $k-1$ parts satisfy the difference condition \eqref{diff-cond-oo}, then we say that $\{\lambda_{s+l}\}_{0\leq l\leq k-2}$ is a $(k-1)$-band of $\lambda$. 

Let $\{\lambda_{s+l}\}_{0\leq l\leq k-2}$ be a $(k-1)$-band of $\lambda$.  We say that the $(k-1)$-band $\{\lambda_{s+l}\}_{0\leq l\leq k-2}$ is  good if the $k-1$ parts in $\{\lambda_{s+l}\}_{0\leq l\leq k-2}$ also   satisfy the congruence condition \eqref{cong-cond-o}, otherwise, we say that  the $(k-1)$-band $\{\lambda_{s+l}\}_{0\leq l\leq k-2}$ is  bad. 

For a partition $\lambda\in\mathbb{C}_1(k,i)$, we see that $\lambda$ is also a partition in $\mathbb{C}_0(k,i)$ if and only if all the $(k-1)$-bands of $\lambda$ are good. 
Next we restrict our attention to certain special $(k-1)$-bands of $\lambda$ to determine whether $\lambda$ is a partition in $\mathbb{C}_0(k,i)$.  Assume that there are $N_{k-1}$ $(k-1)$-marked parts in $GG(\lambda)$, denoted   $\lambda^{(k-1)}_{1}> \lambda^{(k-1)}_{2}>\cdots >\lambda^{(k-1)}_{N_{k-1}}$. For $1\leq p\leq N_{k-1}$, assume that $\lambda^{(k-1)}_{p}$ is the $s_p$-th part $\lambda_{s_p}$ of $\lambda$, there must exist $k-2$ parts $\lambda_g$ in $\lambda$ such that $g>s_p$ and $\lambda_g\geq \lambda_{s_p}-2$ with strict inequality if $\lambda_{s_p}$ is odd. Then $\{\lambda_{s_p+l}\}_{0\leq l\leq k-2}$ is a $(k-1)$-band of $\lambda$. We say that  $\{\lambda_{s_p+l}\}_{0\leq l\leq k-2}$ is the $(k-1)$-band induced by $\lambda^{(k-1)}_{p}$. We also denote the $(k-1)$-band   $\{\lambda_{s_p+l}\}_{0\leq l\leq k-2}$ by $\{\lambda^{(k-1)}_{p}\}_{k-1}$:
 \[\{ \lambda^{(k-1)}_{p,1},\lambda^{(k-1)}_{p,2},\ldots,\lambda^{(k-1)}_{p,k-1}\},\]
where   $\lambda^{(k-1)}_{p,1}\geq \lambda^{(k-1)}_{p,2}\geq \cdots \geq \lambda^{(k-1)}_{p,k-1}$, that is, $\lambda^{(k-1)}_{p,1}=\lambda^{(k-1)}_{p}$ and $\lambda^{(k-1)}_{p}-\lambda^{(k-1)}_{p,k-1}\leq 2$ with strict inequality if  $\lambda^{(k-1)}_{p}$ is odd.

  The following theorem is the main result of this section, which gives a criterion  to  determine whether  a partition $\lambda$ in $\mathbb{C}_1(k,i)$ also belongs to $\mathbb{C}_0(k,i)$.

\begin{thm}\label{parity k-1 sequence} For $k\geq i\geq1$, let $\lambda$ be a partition in $\mathbb{C}_1(k,i)$ with $N_{k-1}$ $(k-1)$-marked parts in $GG(\lambda)$, denoted   $\lambda^{(k-1)}_{1}> \lambda^{(k-1)}_{2}>\cdots >\lambda^{(k-1)}_{N_{k-1}}$. Then $\lambda$  is a partition in $\mathbb{C}_0(k,i)$ if and only if $\{\lambda^{(k-1)}_{p}\}_{k-1}$ is   good   for $1\leq p\leq N_{k-1}$.
\end{thm}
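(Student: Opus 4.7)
My plan is as follows. The forward direction is immediate: if $\lambda\in\mathbb{C}_0(k,i)$ then every $(k-1)$-band of $\lambda$ satisfies \eqref{cong-cond-o}, and in particular each induced band $\{\lambda^{(k-1)}_p\}_{k-1}$ is good. For the converse, I pick an arbitrary $(k-1)$-band $B=(\lambda_s,\ldots,\lambda_{s+k-2})$ of $\lambda$. A short argument based on $\lambda_s-\lambda_{s+k-2}\leq 2$ (strict when $\lambda_s$ is odd) shows that any two parts of $B$ satisfy the G\"ollnitz-Gordon inequality, so by the minimality built into $GG(\lambda)$ the $k-1$ parts of $B$ carry distinct marks, and since $\lambda\in\mathbb{C}_1(k,i)$ has only marks in $\{1,\ldots,k-1\}$, these marks are precisely $\{1,2,\ldots,k-1\}$. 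Consequently $B$ contains a unique $(k-1)$-marked part $\lambda_{s+t}=\lambda^{(k-1)}_p$ (with $0\leq t\leq k-2$), whose induced band $B'=(\lambda_{s+t},\ldots,\lambda_{s+t+k-2})$ is good by hypothesis. It then suffices to prove that $B$ is good if and only if $B'$ is good.

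Write $A=\{\lambda_s,\ldots,\lambda_{s+t-1}\}$ and $C=\{\lambda_{s+k-1},\ldots,\lambda_{s+t+k-2}\}$ for the parts in $B\setminus B'$ and $B'\setminus B$ respectively. Subtracting the two instances of \eqref{cong-cond-o}, the parity defect between the conditions for $B$ and $B'$ is
\[
\Delta \equiv \sum_{\alpha\in A}[\alpha/2]-\sum_{\gamma\in C}[\gamma/2]-\bigl(O_\lambda(\lambda_s)-O_\lambda(\lambda_{s+t})\bigr)\pmod 2.
\]
Since odd values have multiplicity at most one in $\lambda$, I will identify $O_\lambda(\lambda_s)-O_\lambda(\lambda_{s+t})$ with the number of odd parts in $A$. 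Moreover, because $B$ and $B'$ carry the same marks $\{1,\ldots,k-1\}$ and agree on the overlap $B\cap B'$, the set of marks appearing in $A$ coincides with that appearing in $C$ (a common $t$-element subset of $\{1,\ldots,k-2\}$), which allows me to pair each $\alpha\in A$ with the unique $\gamma\in C$ carrying the same mark.

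The heart of the proof, and the main obstacle, is to verify for each such pair $(\alpha,\gamma)$ the congruence
\[
[\alpha/2]-[\gamma/2]\equiv\begin{cases}1,&\alpha\text{ odd},\\ 0,&\alpha\text{ even},\end{cases}\pmod 2.
\]
Since $\alpha\in[\lambda_{s+t},\lambda_s]$ and $\gamma\in[\lambda_{s+t+k-2},\lambda_{s+k-1}]$, combining the two band inequalities gives $d=\alpha-\gamma\leq 4$; on the other hand, the G\"ollnitz-Gordon rule applied to two parts sharing a common mark forces $d\geq 2$ when $\alpha$ is odd and $d\geq 3$ when $\alpha$ is even. A careful accounting of the band ranges then rules out the combination $d=4$ with $\alpha$ odd (it would require $\lambda_s-\lambda_{s+t+k-2}\geq 5$ when $\alpha<\lambda_s$, or $\lambda_s$ itself odd with a difference of $4$, both ruled out by the strict-inequality clauses), leaving the four cases $(d,\alpha\bmod 2)\in\{(2,1),(3,1),(3,0),(4,0)\}$, each an immediate computation of floor differences. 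Summing the pairwise congruence over the $t$ pairs yields $\Delta\equiv 0\pmod 2$, so $B$ is good if and only if $B'$ is good, and the theorem follows.
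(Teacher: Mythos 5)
Your proof is correct, and its skeleton coincides with the paper's: the forward direction is immediate, and for the converse you locate the unique $(k-1)$-marked part inside an arbitrary band (using that the $k-1$ parts of a band must receive the $k-1$ distinct marks), pass to its induced band, cancel the overlap, and show the residual parity defect vanishes. Where you genuinely diverge is in how that defect is computed. The paper isolates this step as Lemma \ref{parity k-1 sequence-over}, stated for \emph{any} two bands whose extreme parts differ by at most $4$; its proof localizes the lower tail in a window $[2b,2b+2)$ and the upper tail in $(2b+2,2b+4]$ (with a separate degenerate case when the top part is odd), so that every floor $[\lambda_u/2]$ can be read off and the defect is visibly the number of odd parts in the upper tail. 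You instead pair each $\alpha\in A=B\setminus B'$ with the part $\gamma\in C=B'\setminus B$ of the same mark and use the spacing that the G\"ollnitz-Gordon marking forces on equally marked parts ($\alpha-\gamma\geq 2$ or $3$ according to the parity of $\alpha$, against the upper bound $\alpha-\gamma\leq 4$ from the two band inequalities) to evaluate $[\alpha/2]-[\gamma/2]$ pair by pair; your exclusion of the case $\alpha-\gamma=4$ with $\alpha$ odd, and your identification of $O_\lambda(\lambda_s)-O_\lambda(\lambda_{s+t})$ with the number of odd parts of $A$ via the multiplicity-one condition, both check out. Your route leans harder on the marking but avoids the paper's parity case split; the paper's version buys a two-band lemma that is reused verbatim for special partitions in Lemma \ref{parity k-1 sequence-over-special}, so if you intend to follow the rest of the construction you would want your pairing argument in that more general form as well.
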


To prove Theorem \ref{parity k-1 sequence}, we need the following lemma.

   \begin{lem}\label{parity k-1 sequence-over} For $k\geq i\geq 1$, let $\lambda=(\lambda_1,\lambda_2,\ldots, \lambda_\ell)$ be a partition in $\mathbb{C}_1(k,i)$, and let $\{\lambda_{s+l}\}_{0\leq l\leq k-2}$ and $\{\lambda_{g+l}\}_{0\leq l\leq k-2}$  be two $(k-1)$-bands of $\lambda$.  If $\lambda_s<\lambda_g$ and $\lambda_{s+k-2}\geq \lambda_g-4$ with strict inequality if $\lambda_g$ is odd, then $\{\lambda_{s+l}\}_{0\leq l\leq k-2}$ and $\{\lambda_{g+l}\}_{0\leq l\leq k-2}$ are both good or bad. 

\end{lem}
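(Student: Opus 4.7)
The plan is to repackage the goodness condition into a single parity invariant. For any $(k-1)$-band $\{\lambda_{t+l}\}_{0\leq l\leq k-2}$, set
\[T(t):=\sum_{l=0}^{k-2}\lfloor\lambda_{t+l}/2\rfloor - O_\lambda(\lambda_t).\]
By the congruence condition \eqref{cong-cond-o}, the band at $t$ is good if and only if $T(t)\equiv i-1\pmod 2$. Hence the lemma is equivalent to $T(g)\equiv T(s)\pmod 2$, and the asserted dichotomy becomes an identity to verify.

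Cancelling the indices common to the two bands, I would rewrite
\[T(g)-T(s)=\sum_{l=g}^{s-1}\lfloor\lambda_l/2\rfloor \;-\; \sum_{l=g+k-1}^{s+k-2}\lfloor\lambda_l/2\rfloor \;-\;\bigl(O_\lambda(\lambda_g)-O_\lambda(\lambda_s)\bigr),\]
and interpret $O_\lambda(\lambda_g)-O_\lambda(\lambda_s)$ as the number of odd parts of $\lambda$ lying in $(\lambda_s,\lambda_g]$; since odd parts have multiplicity at most $1$ in $\mathbb{C}_1(k,i)$, this is just the number of odd integers in $(\lambda_s,\lambda_g]$ that happen to occur in $\lambda$. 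The hypothesis $\lambda_{s+k-2}\geq\lambda_g-4$ (strict if $\lambda_g$ is odd) then forces every part contributing to the two remaining sums to lie in the narrow interval $[\lambda_g-4,\lambda_g]$, or $[\lambda_g-3,\lambda_g]$ if $\lambda_g$ is odd.

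The main step is a case analysis on the parity of $\lambda_g$, with a sub-analysis on which of $\lambda_g-1,\lambda_g-2,\lambda_g-3,\lambda_g-4$ actually appear as parts. The window bound $f_{2u}+f_{2u+1}+f_{2u+2}\leq k-1$ together with the $(k-1)$-band condition on $\{\lambda_{g+l}\}$ and $\{\lambda_{s+l}\}$ sharply restrict the admissible multiplicity patterns; for example, when $\lambda_g$ is odd, the band $\{\lambda_{g+l}\}$ is forced to consist of a single copy of $\lambda_g$ together with $k-2$ copies of $\lambda_g-1$, and the window bound then compels $f_{\lambda_g-2}=0$. In each admissible configuration I would evaluate the two truncated sums via the elementary identity $\lfloor a/2\rfloor-\lfloor b/2\rfloor=\#\{\text{even integers in }(b,a]\}$, and check directly that the resulting parity coincides with the number of odd parts of $\lambda$ in $(\lambda_s,\lambda_g]$, yielding $T(g)\equiv T(s)\pmod 2$.

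The main obstacle is the bookkeeping rather than any deep idea. One must organize the subcases according to $\lambda_g\bmod 4$ and the admissible choices of $\lambda_s$ and $\lambda_{s+k-2}$, and verify that the window and odd-multiplicity constraints genuinely rule out every unlisted pattern; once the list of cases is complete, each verification collapses to a short arithmetic check, and the parity identity $T(g)\equiv T(s)\pmod 2$ holds uniformly across them.
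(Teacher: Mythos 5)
Your proposal is correct and follows essentially the same route as the paper: reduce goodness to a parity invariant, cancel the indices common to the two bands, observe that the surviving parts are confined to the window $[\lambda_g-4,\lambda_g]$ (half-open when $\lambda_g$ is odd), and finish by a case analysis on the parity of $\lambda_g$ — this is exactly the paper's decomposition into the "head" $[\lambda_{g+t-1}/2]+\cdots+[\lambda_g/2]$ and "tail" $[\lambda_{s+k-2}/2]+\cdots+[\lambda_{s+k-1-t}/2]$. One incidental claim in your sketch is false: in the odd case $\lambda_g=2b+3$ the window bound forces $f_{2b}(\lambda)+f_{2b+1}(\lambda)\leq 1$ but does not force $f_{\lambda_g-2}(\lambda)=f_{2b+1}(\lambda)=0$; the paper's Case 2 explicitly allows $\lambda_{s+k-2}=2b+1$, and since $[(2b+1)/2]=[2b/2]=b$ and $2b+1\notin(\lambda_s,\lambda_g]$, this extra configuration still satisfies the parity identity, so your method survives provided the enumeration of admissible patterns includes it.
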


\pf Since $\lambda_s<\lambda_g$, then we have  $s>g$. We  assume that $s=g+t$ where $t\geq1$. Note that $\lambda$ is a partition in $\mathbb{C}_1(k,i)$, then  for $1\leq s\leq \ell-k+1$, we have
\begin{equation}\label{diff-k-1}
\lambda_s\geq\lambda_{s+k-1}+2 \text{ with strict inequality if $\lambda_s$ is even}.
\end{equation}
It follows  that there are at most $2k-2$ parts of $\lambda$ in $[\lambda_g-4,\lambda_g]$ (resp. $(\lambda_g-4,\lambda_g]$) if $\lambda_g$ is even (resp. odd). So $1\leq t\leq k-1$.
Since $(s+k-1-t)-g=k-1$, using \eqref{diff-k-1}, we have  \begin{equation}\label{diff-k-1-1}
\lambda_{g}\geq\lambda_{s+k-1-t}+2 \text{ with strict inequality if $\lambda_g$ is even}.
\end{equation}
Note that $\{\lambda_{g+l}\}_{0\leq l\leq k-2}$ is a $(k-1)$-band of $\lambda$, we have 
\begin{equation}\label{diff-k-1-2}
\lambda_{g}\leq\lambda_{g+k-2}+2 \text{ with strict inequality if $\lambda_g$ is odd}.
\end{equation}
Combining \eqref{diff-k-1-1} and \eqref{diff-k-1-2}, we arrive at $\lambda_{s+k-1-t}<\lambda_{s+k-2-t}$. With the similar argument, we derive that  $\lambda_{g+t}<\lambda_{g+t-1}$.  To summarize, the overlapping structure of $\{\lambda_{s+l}\}_{0\leq l\leq k-2}$ and $\{\lambda_{g+l}\}_{0\leq l\leq k-2}$ can be described as follows. There are two cases.

 If $1\leq t< k-1$, then  $\{\lambda_{s+l}\}_{0\leq l\leq k-2}$ and $\{\lambda_{g+l}\}_{0\leq l\leq k-2}$ have overlapping parts.
{\footnotesize\[\begin{array}{ccccccccccccccccccc}
\lambda_{s+k-2}&\leq &\cdots
&\leq& \lambda_{s+k-1-t}&<&\lambda_{s+k-2-t}&\leq&\cdots&\leq& \lambda_s\\[5pt]
&&&&&&\|&&&&\|&&&&\\[5pt]
&&&&&&\lambda_{g+k-2} &\leq& \cdots &\leq &\lambda_{g+t}&<&\lambda_{g+t-1}&\leq
&\cdots&\leq&\lambda_{g}
\end{array}
\]}

If $t=k-1$, then $\{\lambda_{s+l}\}_{0\leq l\leq k-2}$ and $\{\lambda_{g+l}\}_{0\leq l\leq k-2}$ have the following relation:
\[\lambda_{s+k-2}\leq\cdots\leq \lambda_s<\lambda_{g+k-2}\leq\cdots\leq \lambda_g.\]

Set
  \begin{equation}\label{cong-x-lem-over}
  [\lambda_{s+k-2}/2]+\cdots+[\lambda_{s}/2]\equiv a+{O}_\lambda(\lambda_{s})\pmod2,
  \end{equation}
   where $a=i-1$ or $i$. We aim to prove that for the same $a$ in \eqref{cong-x-lem-over}, we have
    \begin{equation}\label{cong-g-lem-over}
  [\lambda_{g+k-2}/2]+\cdots+[\lambda_{g}/2]\equiv a+{O}_\lambda(\lambda_{g})\pmod2.
   \end{equation}

   Note that  $\lambda_{g+l}=\lambda_{s+l-t}$ for $t\leq l\leq k-2$, so
      \begin{eqnarray*}
   && [\lambda_{g+k-2}/2]+\cdots+[\lambda_{g+t}/2]+[\lambda_{g+t-1}/2]+\cdots
   +[\lambda_{g}/2]\\[5pt]
   &&\qquad=
   [\lambda_{s+k-2-t}/2]+\cdots+[\lambda_s/2]+[\lambda_{g+t-1}/2]+\cdots
   +[\lambda_{g}/2]\\[5pt]
   &&\qquad=
   [\lambda_{s+k-2}/2]+\cdots+[\lambda_s/2]\\[5pt]
   &&\qquad\qquad+
  [\lambda_{g+t-1}/2]+\cdots
   +[\lambda_{g}/2]-
   \left([\lambda_{s+k-2}/2]+\cdots+[\lambda_{s+k-1-t}/2]\right).
   \end{eqnarray*}
 Combining with \eqref{cong-x-lem-over}, we find that in order to prove \eqref{cong-g-lem-over}, it suffices to show that
   \begin{equation} \label{cong-g-x-f-over}
   \begin{split}
  &[\lambda_{g+t-1}/2]+\cdots
   +[\lambda_{g}/2]-
   \left([\lambda_{s+k-2}/2]+\cdots+[\lambda_{s+k-1-t}/2]\right)\\[5pt]
   &\qquad \equiv O_\lambda(\lambda_g)-O_\lambda(\lambda_s)\pmod{2}.
   \end{split}
   \end{equation}
We consider the following two cases.

Case 1: $\lambda_g$ is even. We may write $\lambda_g=2b+4$. Under the condition that $\lambda_{s+k-2}\geq \lambda_g-4$ with strict inequality if $\lambda_g$ is odd, we have $\lambda_{s+k-2}\geq 2b$. Using \eqref{diff-k-1-2}, we have $\lambda_{g+k-2}\geq\lambda_g-2\geq 2b+2$. Furthermore, $\lambda_{s+k-1-t}<2b+2$ and $\lambda_{g+t-1}>2b+2$. Hence we conclude that
\[2b \leq \lambda_{s+k-2}\leq   \cdots \leq \lambda_{s+k-1-t}<2b+2,\] and
\[2b+2<\lambda_{g+t-1}\leq  \cdots \leq \lambda_{g}=2b+4.\]
 It yields that for $k-1-t\leq l\leq k-2$,
$[\lambda_{s+l}/2]=b$,  and for  $0\leq l\leq t-1$,
\begin{align*}
&[\lambda_{g+l}/2]=\left\{
\begin{array}{ll}
b+1, & \hbox{if $\lambda_{g+l}$ is odd,}\\[5pt]
b+2, & \hbox{otherwise.}
\end{array}
\right.
\end{align*}
It follows that
 \begin{equation*}
     \begin{split}
 &[\lambda_{g+t-1}/2]+\cdots
   +[\lambda_{g}/2]-
   \left([\lambda_{s+k-2}/2]+\cdots+[\lambda_{s+k-1-t}/2]\right)\nonumber\\[5pt]
   &\qquad\equiv \text{the number of $\lambda_{g+l}$ ($0\leq l\leq t-1 $) which are odd}\pmod{2}.
\end{split}
   \end{equation*}
   By the definitions of $O_\lambda(\lambda_g)$ and $O_\lambda(\lambda_s)$, we arrive at \eqref{cong-g-x-f-over}.

Case 2: $\lambda_g$ is odd. We may write $\lambda_g=2b+3$. In such case, we have $\lambda_{g+k-2}=\cdots=\lambda_{g-1}=2b+2$. Under the condition that $\lambda_{s+k-2}\geq \lambda_g-4$ with strict inequality if $\lambda_g$ is odd, we have  $\lambda_{s+k-2}\geq 2b$. Furthermore, we see that $\lambda_{s+k-3}=\cdots=\lambda_{s}=2b+2$. It follows that $t=1$, $\lambda_{s+k-2}=2b$ or $2b+1$, and $O_\lambda(\lambda_g)-O_\lambda(\lambda_s)=1$. Thus we have
 \begin{equation*}
 \begin{split}
   &[\lambda_{g+t-1}/2]+\cdots
   +[\lambda_{g}/2]-
   \left([\lambda_{s+k-2}/2]+\cdots+[\lambda_{s+k-1-t}/2]\right)\nonumber\\[5pt]
   &\qquad = [\lambda_{g}/2]-[\lambda_{s+k-2}/2]=1=O_\lambda(\lambda_g)-O_\lambda(\lambda_s)\pmod{2}.
   \end{split}
   \end{equation*}
We arrive at \eqref{cong-g-x-f-over}.  This completes the proof.    \qed

We are now in a position to give a proof of Theorem \ref{parity k-1 sequence}.

\begin{proof}[Proof of Theorem \ref{parity k-1 sequence}]
 If  $\lambda$ is a partition in $\mathbb{C}_0(k,i)$, then all the $(k-1)$-bands of $\lambda$ are good. Thus the $(k-1)$-band $\{\lambda^{(k-1)}_{p}\}_{k-1}$ induced by $\lambda^{(k-1)}_{p}$ is good for $1\leq p\leq N_{k-1}$.

On the other hand, we assume that $\{\lambda^{(k-1)}_{p}\}_{k-1}$ is good for $1\leq p\leq N_{k-1}$. To prove   $\lambda$ is a partition in $\mathbb{C}_0(k,i)$, it suffices to show  all the $(k-1)$-bands of $\lambda$ are good. Given a $(k-1)$-band $\{\lambda_{g+l}\}_{0\leq l\leq k-2}$ of $\lambda$, that is,  $\lambda_g\leq \lambda_{g+k-2}+2$ with strict inequality if $\lambda_g$ is odd. We aim to show  $\{\lambda_{g+l}\}_{0\leq l\leq k-2}$  is good.

 By the definition of the G\"ollnitz-Gordon marking, we see that the  parts in $\{\lambda_{g+l}\}_{0\leq l\leq k-2}$ have different marks in $GG(\lambda)$. Thus there is a part, say $\lambda_{g+l_g}$ ($0\leq l_g\leq k-2$), marked with $k-1$  in $GG(\lambda)$.   Assume that $\lambda_{g+l_g}$ is the $p_g$-th $(k-1)$-marked part $\lambda^{(k-1)}_{p_g}$ in $GG(\lambda)$, then we have $\lambda_g-\lambda^{(k-1)}_{p_g}\leq 2$ with strict inequality if $\lambda_g$ is odd. Let 
\[\{\lambda^{(k-1)}_{p_g,1}, \lambda^{(k-1)}_{p_g,2}, \ldots, \lambda^{(k-1)}_{p_g,k-1}\}\]
be the $(k-1)$-band $\{\lambda^{(k-1)}_{p_g}\}_{k-1}$ induced by $\lambda^{(k-1)}_{p_g}$, that is, $\lambda^{(k-1)}_{p_g,1}=\lambda^{(k-1)}_{p_g}$, and $\lambda^{(k-1)}_{p_g}-\lambda^{(k-1)}_{p_g,k-1}\leq 2$ with strict inequality if $\lambda^{(k-1)}_{p_g}$ is odd. Hence
 \[\lambda_g-\lambda^{(k-1)}_{p_g,k-1}=(\lambda_g-\lambda^{(k-1)}_{p_g})+(\lambda^{(k-1)}_{p_g}-\lambda^{(k-1)}_{p_g,k-1})\leq 4\]
   with strict inequality if $\lambda_{g}$ is odd.  By Lemma \ref{parity k-1 sequence-over}, we deduce that $\{\lambda_{g+l}\}_{0\leq l\leq k-2}$ and $\{\lambda^{(k-1)}_{p_g}\}_{k-1}$ are both good or bad  in $\lambda$. Under the assumption that $\{\lambda^{(k-1)}_{p}\}_{k-1}$ is good for $1\leq p\leq N_{k-1}$, we derive that   $\{\lambda_{g+l}\}_{0\leq l\leq k-2}$ is good. It follows that $\lambda$ is a partition in $\mathbb{C}_0(k,i)$.  This completes the proof.  
   \end{proof}

Now, we turn to give equivalent statements of Theorem \ref{main-thm}.

For $k\geq i\geq1$ and $j=0$ or $1$, let $\mathbb{E}_j(k,i)$ denote the number of partitions in $\mathbb{C}_j(k,i)$ without odd parts. For $N_1\geq N_2\geq \cdots\geq N_{k-1}\geq 0$, let $\mathbb{E}_j(N_1,\ldots,N_{k-1};i)$ denote the set of partitions $\lambda$ in $\mathbb{E}_j(k,i)$ with $N_r$ parts in the $r$-th row of $GG(\lambda)$ for $1\leq r\leq k-1$.  Kur\c{s}ung\"{o}z \cite{Kursungoz-2010} established the following identity based on the Gordon marking of a partition.
\begin{equation*}
\sum_{\lambda\in\mathbb{E}_j(N_1,\ldots,N_{k-1};i)}q^{\frac{|\lambda|}{2}}=\frac{q^{N_1^2+\cdots+N_{k-1}^2+N_i+\cdots+N_{k-1}}}{(q;q)_{N_1-N_2}\cdots(q;q)_{N_{k-2}-N_{k-1}} (q^{2-j};q^{2-j})_{N_{k-1}}}.
\end{equation*}
Then, we can see that
\begin{equation*}
\sum_{\lambda\in\mathbb{E}_j(N_1,\ldots,N_{k-1};i)}q^{|\lambda|}=\frac{q^{2(N_1^2+\cdots+N_{k-1}^2+N_i+\cdots+N_{k-1})}}{(q^2;q^2)_{N_1-N_2}\cdots(q^2;q^2)_{N_{k-2}-N_{k-1}} (q^{4-2j};q^{4-2j})_{N_{k-1}}}.
\end{equation*}
It follows that
\begin{equation*}
\begin{split}
&\sum_{\lambda\in\mathbb{E}_j(k,i)}x^{\ell(\lambda)}q^{|\lambda|}=\sum_{N_1\geq \cdots\geq N_{k-1}\geq0}x^{N_1+\cdots+N_{k-1}}\sum_{\lambda\in\mathbb{E}_j(N_1,\ldots,N_{k-1};i)}q^{|\lambda|}\\
&\qquad=\sum_{N_1\geq \cdots\geq N_{k-1}\geq0}\frac{q^{2(N_1^2+\cdots+N_{k-1}^2+N_i+\cdots+N_{k-1})}x^{N_1+\cdots+N_{k-1}}}{(q^2;q^2)_{N_1-N_2}\cdots(q^2;q^2)_{N_{k-2}-N_{k-1}} (q^{4-2j};q^{4-2j})_{N_{k-1}}}.
\end{split}
\end{equation*}

For $N\geq 0$, let $\mathbb{I}_{N}$ denote the set of partitions $\eta=(\eta_1,\eta_2,\ldots,\eta_\ell)$ with distinct odd parts greater than or equal to $2N+1$, that is, $\eta_1>\eta_2>\cdots>\eta_{\ell}\geq 2N+1$.  The generating functions for partitions in $\mathbb{I}_{N}$ is
\[\sum_{\eta\in\mathbb{I}_{N}}x^{\ell(\eta)}q^{|\eta|}=(1+xq^{2N+1})(1+xq^{2N+3})\cdots=(-xq^{2N+1};q^2)_\infty.\]

We define $\mathbb{F}_1(2,2)$ to be set of pairs $(\lambda,-,\eta)$  of partitions such that
\[\lambda\in\mathbb{E}_1(2,2)\text{ and }\eta\in\mathbb{I}_{N_1(\lambda)}.\]
Then for $k=i=2$ and $j=1$, Theorem \ref{main-thm} is equivalent to the following combinatorial statement.
\begin{thm}\label{equiv-main-thm-2}
There is a bijection $\Phi_{2,2}$ between $\mathbb{F}_1(2,2)$ and $\mathbb{C}_1(2,2)$. Moreover, for a pair $(\lambda,-,\eta)\in \mathbb{F}_1(2,2)$, we have $\pi=\Phi_{2,2}(\lambda,-,\eta)\in\mathbb{C}_1(2,2)$ such that
\[|\pi|=|\lambda|+|\eta| \text{ and } \ell(\pi)=\ell(\lambda)+\ell(\eta).\]
\end{thm}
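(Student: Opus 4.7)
The plan is to construct $\Phi_{2,2}$ and its inverse explicitly via the insertion operation and the separation operation developed in Section 4, and to check the weight and length identities directly from the construction. Because both $\lambda\in\mathbb{E}_1(2,2)$ and every $\pi\in\mathbb{C}_1(2,2)$ have their G\"ollnitz-Gordon markings confined to the single row marked $1$ (for $\pi$, this follows from the fact that consecutive parts differ by at least $2$, with strict inequality when the larger part is even), the bijection in this base case amounts to interleaving the odd parts of $\eta$ with the even parts of $\lambda$, adjusted so that the difference conditions defining $\mathbb{C}_1(2,2)$ are maintained after each step.

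For the forward direction, given $(\lambda,-,\eta)\in\mathbb{F}_1(2,2)$ with $\eta=(\eta_1,\eta_2,\ldots,\eta_\ell)$ and $\eta_\ell\geq 2N_1(\lambda)+1$, I would set $\pi^{(0)}=\lambda$ and, for $s=1,2,\ldots,\ell$, obtain $\pi^{(s)}$ by applying the insertion operation to $\pi^{(s-1)}$ with the incoming odd part $\eta_{\ell-s+1}$. The insertion locates the admissible slot for an odd part of that size and, when placing it would create a pair of consecutive integers or a pair of even parts differing by exactly $2$, it performs a cascading adjustment in which some even parts descend by $2$ while the inserted odd part ascends by the same total amount, preserving both the total sum and the total length at every stage. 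Setting $\pi=\pi^{(\ell)}$ defines $\Phi_{2,2}(\lambda,-,\eta)=\pi\in\mathbb{C}_1(2,2)$ with $|\pi|=|\lambda|+|\eta|$ and $\ell(\pi)=\ell(\lambda)+\ell(\eta)$ built in by construction.

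For the inverse, I would iterate the separation operation: from $\pi\in\mathbb{C}_1(2,2)$, identify the smallest odd part present, pull it out while reversing the induced shifts of the surrounding even parts, and record the extracted odd part. Repeating until no odd parts remain yields a partition $\lambda$ consisting only of even parts, and the sequence of extracted odd parts reassembled in decreasing order forms $\eta$. I would then verify that $\lambda\in\mathbb{E}_1(2,2)$ (distinct even parts with consecutive differences at least $4$) and that the smallest extracted odd part is at least $2N_1(\lambda)+1$, so that the output pair lies in $\mathbb{F}_1(2,2)$.

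The main obstacle is showing that the insertion operation, after all cascading adjustments, always lands in $\mathbb{C}_1(2,2)$, and that insertion and separation are mutually inverse on the respective classes. In particular, one must argue that the bound $\eta_\ell\geq 2N_1(\lambda)+1$ is precisely what guarantees the first insertion to succeed and, inductively via the G\"ollnitz-Gordon marking, that every later insertion succeeds with the correct bound required for the next. The G\"ollnitz-Gordon marking serves as the bookkeeping device that makes this termination and compatibility precise; the verification will be carried out using the explicit descriptions of insertion and separation and their effect on the marking developed in Section 4.
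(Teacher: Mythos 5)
Your forward construction matches the paper's: starting from $\lambda$, you apply the insertion once for each part of $\eta$, beginning with the smallest part $\eta_\ell=2m_\ell+1$ and working upward, with the condition $m_\ell\geq N_1(\lambda)$ guaranteeing the first insertion applies and Theorem \ref{ssins} guaranteeing each subsequent one does (since $m_{\ell-b}>m_{\ell-b+1}$). That part of your plan is sound, although your description of the cascade has the arithmetic running the wrong way: in the insertion $I_{m}=\Theta_{(p)}\circ C_{p,t}$ the new odd part that actually appears in the partition is $2t+1=2(m-p)+1$, which is \emph{smaller} than the part $2m+1$ of $\eta$, and the surplus $2p$ is absorbed by \emph{raising} $p$ of the existing even parts by $2$ each; the even parts do not descend.

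The genuine gap is in your inverse. You propose to extract the \emph{smallest} odd part of $\pi$ first. The separation is only defined at an \emph{insertion part}, which by definition must be the \emph{largest} odd part of the current partition, and the correct inverse must undo the insertions in reverse (last-in, first-out) order: the paper's $\Psi_{2,2}$ repeatedly removes the largest odd part $2t_b+1$, sitting at position $p_b+1$, and records $2m_b+1$ with $m_b=p_b+t_b$. Extracting the smallest odd part first fails concretely: in $\pi$ the earliest-inserted odd part is the smallest and lies \emph{below} all later-inserted odd parts, so "reversing the induced shifts of the surrounding even parts" would subtract $2$ from the remaining odd parts as well, and the value you would record is $2t_\ell+1+2(p_\ell+\ell-1)=2m_\ell+1+2(\ell-1)\neq 2m_\ell+1$. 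Relatedly, the entries of $\eta$ are not "the extracted odd parts" of $\pi$: they are the shifted values $2m+1=2(p+t)+1$, not the literal odd parts $2t+1$ occurring in $\pi$. As written, your $\Psi_{2,2}$ is neither well-defined via the separation nor inverse to your $\Phi_{2,2}$; replacing "smallest" by "largest" and recording $2(p+t)+1$ repairs it and recovers the paper's argument.
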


For $N\geq 0$, let $\mathbb{O}_{N}$ denote the set of partitions $\tau=(\tau_1,\tau_2,\ldots,\tau_\ell)$ with distinct negative odd parts which lie in $[1-2N,-1]$, that is, $-1\geq\tau_{1}>\tau_2>\cdots>\tau_{\ell}\geq 1-2N$. Clearly, the generating functions for partitions in $\mathbb{O}_{N}$ is
\[\sum_{\tau\in\mathbb{O}_{N}}q^{|\tau|}=(1+q^{1-2N})(1+q^{3-2N})\cdots(1+q^{-1})=(-q^{1-2N};q^2)_{N}.\]

Then for $k\geq 3$, $k\geq i\geq2$ and $j=0$ or $1$, we define $\mathbb{F}_j(k,i)$ to be set of triplets $(\lambda,\tau,\eta)$ of partitions such that
\[\lambda\in\mathbb{E}_j(k,i), \tau\in\mathbb{O}_{N_2(\lambda)}\text{ and }\eta\in\mathbb{I}_{N_1(\lambda)}.\]
In this case, Theorem \ref{main-thm} is equivalent to the following combinatorial statement.
\begin{thm}\label{equiv-main-thm}
For $k\geq 3$, $k\geq i\geq2$ and $j=0$ or $1$, there is a bijection $\Phi_{k,i}$ between $\mathbb{F}_{j}(k,i)$ and $\mathbb{C}_j(k,i)$, namely, for a triplet $(\lambda,\tau,\eta)\in \mathbb{F}_j(k,i)$, we have $\pi=\Phi_{k,i}(\lambda,\tau,\eta)\in\mathbb{C}_j(k,i)$ such that
\[|\pi|=|\lambda|+|\tau|+|\eta|\text{ and }\ell(\pi)=\ell(\lambda)+\ell(\eta).\]
\end{thm}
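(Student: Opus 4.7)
The strategy is to build $\Phi_{k,i}$ as a composition of two elementary maps, one handling $\tau$ and the other handling $\eta$. Given $(\lambda,\tau,\eta)\in\mathbb{F}_j(k,i)$, I would first apply the dilation operation (to be defined in Section~5) to the pair $(\lambda,\tau)$ to obtain an intermediate partition $\mu\in\mathbb{C}_j(k,i)$ satisfying $\ell(\mu)=\ell(\lambda)$, $|\mu|=|\lambda|+|\tau|$, and $N_1(\mu)=N_1(\lambda)$. Then I would apply an insertion operation of the type used in Section~4 (suitably generalized beyond the $k=i=2$ setting) to successively insert the distinct odd parts of $\eta$, each bounded below by $2N_1(\lambda)+1$, into $\mu$ to produce $\pi=\Phi_{k,i}(\lambda,\tau,\eta)$. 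Because dilation preserves length while insertion adds exactly one part per step, the weight and length identities $|\pi|=|\lambda|+|\tau|+|\eta|$ and $\ell(\pi)=\ell(\lambda)+\ell(\eta)$ follow automatically.

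Intuitively, each negative odd part $\tau_s=-(2a_s-1)$ will be absorbed by converting a suitable even part of $\lambda$ into an odd part of smaller value; the row-$1$ count of $\lambda$ is preserved because the converted parts, being odd, sit at mark $1$ while vacating row~$2$. Once the intermediate $\mu$ has been formed, every part of $\eta$ lies above the range of its existing odd parts and is large enough (by the threshold $2N_1(\mu)+1=2N_1(\lambda)+1$) that its insertion cascades through the G\"ollnitz--Gordon marking without violating conditions (1)--(3). The inverse $\Phi_{k,i}^{-1}$ is then obtained by running these two stages in reverse: the separation operation peels off the large odd parts to recover $\eta$ and the intermediate $\mu$, and the reduction operation of Section~5 decomposes $\mu$ into the original pair $(\lambda,\tau)$. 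The key to well-definedness of the inverse is the identity $N_1(\mu)=N_1(\lambda)$, which provides the unambiguous threshold at which separation distinguishes $\eta$-parts from odd parts created by dilation.

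The main obstacle is the verification that the resulting $\pi$ lies in $\mathbb{C}_j(k,i)$, and in particular that condition (4) of the Bressoud--G\"ollnitz--Gordon theorem is preserved when $j=0$. Here Theorem \ref{parity k-1 sequence} is indispensable: it localizes the check so that rather than verifying the congruence \eqref{cong-cond-o} for every $(k-1)$-band of $\pi$, one only needs to track the $(k-1)$-bands induced by the $(k-1)$-marked parts. Since dilation only perturbs rows $1$ and $2$ of the marking and insertion only modifies parts of sufficiently large value, each induced $(k-1)$-band is moved in a controlled way and the accompanying change in $O_\pi(\cdot)$ matches the parity shift of $[\lambda^{(k-1)}_{p,1}/2]+\cdots+[\lambda^{(k-1)}_{p,k-1}/2]$, reducing the problem to a finite local computation. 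A subordinate difficulty is book-keeping the row counts $N_1,\ldots,N_{k-1}$ consistently through both stages, which is ultimately what forces the map to realize the generating function \eqref{main-eqn} established in Theorem \ref{Gollnitz-3-ee}.
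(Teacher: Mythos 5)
Your high-level decomposition --- absorb $\tau$ into $\lambda$ by one family of operations, then absorb $\eta$ by repeated insertion, and invert by peeling odd parts off $\pi$ --- is the right general shape, but the specific two-stage ordering you propose does not work, and the paper's proof is built precisely around avoiding it. (A minor point first: you have the names reversed. Since the parts of $\tau$ are negative, absorbing them must \emph{decrease} the weight, so the forward map uses the reduction $R_p$ of Theorem \ref{reduction-1}, and the dilation $H_p$ is its inverse.) The genuine gap is that the reductions and insertions cannot be performed in two clean separate stages. The reduction $R_p$ is only defined on $\mathbb{\hat{C}}_j(N_1,\ldots,N_{k-1};i|p)$, which requires that $\lambda$ have \emph{no odd parts} $\geq\lambda^{(2)}_p$; the insertion $I_m$ is only defined on $\mathbb{C}^{<}_j(N_1,\ldots,N_{k-1};i|p,t)$, which requires the $1$-marked parts $\lambda^{(1)}_1,\ldots,\lambda^{(1)}_p$ above the landing position to be of even type. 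Each operation creates a new odd part somewhere in the middle of the partition, so performing all reductions first can destroy the precondition for the subsequent insertions (and vice versa). This is why the paper interleaves the two families, processing at each step the smallest remaining part of $\tau$ and of $\eta$ and choosing which operation to apply by the case analysis of Theorem \ref{success-2}, which compares the insertion's landing value $2t'_b+1$ with $\lambda^{b,(1)}_{s_b}$ and, in the tied case, branches on the cluster type of $\hat{\alpha}_{p_b}(\lambda^b)$. Your proposal contains no substitute for this compatibility analysis, and without it the forward map is simply not well defined on all of $\mathbb{F}_j(k,i)$.

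The second, related gap is in your inverse. You claim the threshold $2N_1(\lambda)+1$ lets separation distinguish $\eta$-parts from odd parts created in the $\tau$-stage. But the insertion $I_m$ does not place the literal part $2m+1$ into the partition: it places $2t+1$ with $t=m-p$ and redistributes the remaining $2p$ across other parts via the combination $C_{p,t}$ and the bijection $\Theta_{(p)}$, so the odd part visible in $\pi$ can be far below $2N_1(\lambda)+1$ and can interlace in value with the odd parts produced by reductions. The dichotomy that actually drives the paper's inverse $\Psi_{k,i}$ is local and structural, not a value threshold: the largest odd part of $\pi^b$ is either an \emph{insertion part} or a \emph{reduction part} (these are complementary by definition), and this determines whether to apply $S_{m_b}$ or $H_{p_b}$ at that step. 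Establishing that the forward interleaving always leaves the most recently created odd part as the largest one, correctly typed, is the content of Theorems \ref{ssins} and \ref{success-2}; that verification is the heart of the proof and is missing from your proposal. Your use of Theorem \ref{parity k-1 sequence} to localize the $j=0$ congruence check is correct and matches the paper, but that part of the work is already packaged inside Theorems \ref{deltagammathmbb-insertion-all} and \ref{reduction-1}.
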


\section{Proof of Theorem \ref{equiv-main-thm-2}}
The main objective of this section is to give a proof of Theorem \ref{equiv-main-thm-2}.
To this end, we construct the bijection $\theta_{(p)}$ and   the combination operation, where $\theta_{(p)}$ is essentially the same as that defined in Section 5 in \cite{he-ji-wang-zhao-2019}. Composing the combination operation with $\theta_{(p)}$ gives the insertion operation which is the main ingredient for the proof of Theorem \ref{equiv-main-thm-2}.

\subsection{The bijection  $\theta_{(p)}$}

In this subsection, we modify the results in   Section 5 in \cite{he-ji-wang-zhao-2019}, which have been  proved  for $j=1$.  We just show that the results in this subsection also hold for $j=0$. Throughout this subsection, we assume that $k\geq i\geq 1$, $j=0$ or $1$, and $N_1\geq\cdots\geq N_{k-1}\geq0$.

Let $\mathbb{C}_j(N_1,\ldots,N_{k-1};i)$ denote the set of partitions $\lambda$ in $\mathbb{C}_j(k,i)$ such that there are $N_r$  $(k-1)$-marked parts in $GG(\lambda)$ for $1\leq r\leq k-1$. Let $\lambda$ be a partition in $\mathbb{C}_j(N_1,\ldots,N_{k-1};i)$. 
We use $\lambda^{(1)}=(\lambda^{(1)}_1,\lambda^{(1)}_2,\ldots,\lambda^{(1)}_{N_1})$ to denote the $1$-marked parts in $GG(\lambda)$, where $\lambda^{(1)}_1>\lambda^{(1)}_2>\cdots>\lambda^{(1)}_{N_1}$. We now divide  the parts of  $\lambda^{(1)}$ into the following two types. For $1\leq s\leq N_1$, we say  $\lambda^{(1)}_s$ is an odd type if  $\lambda^{(1)}_s$ is an odd part, or there is an odd part $\lambda^{(1)}_s+1$ in $\lambda$, otherwise, we say   $\lambda^{(1)}_s$ is an  even type.

 For $1\leq p \leq N_1$, we prepare three subsets $\mathbb{C}_j(N_1,\ldots,N_{k-1};i|p)$, $\overline{\mathbb{C}}_j(N_1,\ldots,N_{k-1};i|p)$ and $\overrightarrow{\mathbb{C}}_j(N_1,\ldots,N_{k-1};i|p)$ of $\mathbb{C}_j(N_1,\ldots,N_{k-1};i)$, which are described by using the sub-partition $\lambda^{(1)}=(\lambda^{(1)}_1,\,\lambda^{(1)}_2,\ldots, \lambda^{(1)}_{N_1})$ of a partition $\lambda$ in $\mathbb{C}_j(N_1,\ldots,N_{k-1};i)$.

 $\lozenge$ Let $\mathbb{C}_j(N_1,\ldots,N_{k-1};i|p)$ be the set of partitions $\lambda$ in $\mathbb{C}_j(N_1,\ldots,N_{k-1};i)$, where
     $\lambda_p^{(1)}$  is odd type and $\lambda_s^{(1)}$  is even type   for $1\leq p\leq N_1$ and $ 1\leq s\leq p-1$.

$\lozenge$  Let $\mathbb{\overline{C}}_j(N_1,\ldots,N_{k-1};i|p)$ be the set of partitions $\lambda$ in $\mathbb{C}_j(N_1,\ldots,N_{k-1};i)$, where
     $\lambda_p^{(1)}$  is even type, $\lambda_{p-1}^{(1)}$  is odd type, and $\lambda_s^{(1)}$ is even type for $1< p\leq N_1$ and $1\leq s\leq p-2$.

$\lozenge$  Let $\overrightarrow{\mathbb{C}}_j(N_1,\ldots,N_{k-1};i|p)$ be the set of partitions $\lambda$ in $\mathbb{C}_j(N_1,\ldots,N_{k-1};i)$, where $\lambda_s^{(1)}$ is even type for $1\leq p\leq N_1$ and $1\leq s\leq p$.

We give a bijection $\Theta_p$ between $\mathbb{C}_j(N_1,\ldots,N_{k-1};i|p)$ and  $\overline{\mathbb{C}}_j(N_1,\ldots,N_{k-1};i|p)$ for $1<p\leq N_1$, a bijection $\Theta_1$ between $\mathbb{C}_j(N_1,\ldots,N_{k-1};i|1)$ and  $\overrightarrow{\mathbb{C}}_j(N_1,\ldots,N_{k-1};i|1)$, and a bijection $\Theta_{(p)}$ between $\mathbb{C}_j(N_1,\ldots,N_{k-1};i|p)$ and  $\overrightarrow{\mathbb{C}}_j(N_1,\ldots,N_{k-1};i|p)$ for $1\leq p\leq N_1$.

 \begin{lem}\label{N-2-1-lem}
For $1< p\leq N_1$, there is a bijection $\Theta_p$ between $\mathbb{   C}_j(N_1,\ldots,N_{k-1};i|p)$ and   $\mathbb{\overline{C}}_j(N_1,\ldots,N_{k-1};i|p)$.
Furthermore, for $\lambda \in \mathbb{{C}}_j(N_1,\ldots,N_{k-1};i|p)$, we have   $\mu=\Theta_p(\lambda)\in\mathbb{\overline{C}}_j(N_1,\ldots,N_{k-1};i|p)$ and  $|\mu|=|\lambda|+2$. 

 \end{lem}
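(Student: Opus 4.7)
The strategy is to leverage the bijection $\Theta_p$ already constructed in Section 5 of \cite{he-ji-wang-zhao-2019}, where it is proven to give a weight-preserving (up to $+2$) bijection between $\mathbb{C}_1(N_1,\ldots,N_{k-1};i|p)$ and $\overline{\mathbb{C}}_1(N_1,\ldots,N_{k-1};i|p)$, and to verify that the same construction remains valid when $j=0$. Since $\Theta_p$ is already known to preserve the defining conditions of $\mathbb{C}_1(k,i)$, the row counts $(N_1,\ldots,N_{k-1})$ of the G\"ollnitz-Gordon marking, and the odd/even types of the unchanged $1$-marked parts, the only new point to check for $j=0$ is that the additional congruence condition (4) in Theorem \ref{Gordon-Gollnitz-even} is preserved by $\Theta_p$ and its inverse.

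By Theorem \ref{parity k-1 sequence}, this reduces to showing that each $(k-1)$-band induced by a $(k-1)$-marked part of $\mu=\Theta_p(\lambda)$ is good whenever the corresponding band in $\lambda$ is good (and similarly for the inverse map). The plan is therefore to recall the explicit description of $\Theta_p$ from \cite{he-ji-wang-zhao-2019}, isolate the small collection of parts near $\lambda_{p-1}^{(1)}$ and $\lambda_p^{(1)}$ that are actually modified (with sub-cases depending on whether $\lambda_p^{(1)}$ is itself odd or is paired with an odd part $\lambda_p^{(1)}+1$, and similarly for $\lambda_{p-1}^{(1)}$), and then catalogue the finitely many $(k-1)$-marked parts whose induced $(k-1)$-band overlaps this modified region.

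For each such overlapping band, I would compute both $\sum_{l=0}^{k-2}[\lambda_{s+l}/2]$ and $O_\lambda(\lambda_s)$ before and after applying $\Theta_p$, and check that their parities change by the same amount modulo $2$. The underlying mechanism is the parity-matching identity already seen in the proof of Lemma \ref{parity k-1 sequence-over}: the change in the number of odd parts inside the band matches, modulo $2$, the change in the floor sum. The main obstacle I expect is the sub-case analysis, particularly in cases where $\Theta_p$ moves an odd part across several positions of the $1$-row of the G\"ollnitz-Gordon marking; in each such instance one must carefully track the simultaneous change of $O_\lambda$ and of $\sum_l[\lambda_{s+l}/2]$ over every affected band, so that ``good'' bands map to ``good'' bands and ``bad'' to ``bad'' in both directions, which then yields the desired bijection and the weight identity $|\mu|=|\lambda|+2$.
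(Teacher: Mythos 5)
Your proposal matches the paper's approach: the paper likewise takes the construction of $\Theta_p$ (split into the sub-bijections $\Theta_{1,p}$ and $\Theta_{2,p}$ of Lemmas \ref{varphi1} and \ref{varphi2}) from Section 5 of \cite{he-ji-wang-zhao-2019}, where the $j=1$ case is established, and for $j=0$ invokes Theorem \ref{parity k-1 sequence} to reduce to checking that each $(k-1)$-band induced by a $(k-1)$-marked part remains good, verified by exactly the case analysis you describe (comparing $\sum_l[\lambda_{s+l}/2]$ and $O_\lambda$ before and after the local modification). The plan is correct and essentially identical to the paper's proof.
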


To show Lemma \ref{N-2-1-lem}, for $1< p\leq N_1$, we divide    $\mathbb{C}_j(N_1,\ldots,N_{k-1};i|p)$ into two disjoint subsets  $\mathbb{C}^{(1)}_j(N_1,\ldots,N_{k-1};i|p)$ and $\mathbb{C}^{(2)}_j(N_1,\ldots,N_{k-1};i|p)$, and   divide
  $\mathbb{\overline{C}}_j(N_1,\ldots,N_{k-1};i|p)$ into two disjoint subsets  $\mathbb{\overline{C}}^{(1)}_j(N_1,\ldots,N_{k-1};i|p)$ and $\mathbb{\overline{C}}^{(2)}_j(N_1,\ldots,N_{k-1};i|p)$. We then build the bijection $\Theta_p$ consisting of two bijections $\Theta_{l,p}$ between  $\mathbb{C}^{(l)}_j(N_1,\ldots,N_{k-1};i|p)$  and  $\mathbb{\overline{C}}^{(l)}_j(N_1,\ldots,N_{k-1};i|p)$, where $1\leq l\leq 2$.

  For $1< p\leq N_1$, we divide $\mathbb{C}_j(N_1,\ldots,N_{k-1};i|p)$  into the  following two disjoint subsets  $\mathbb{C}^{(1)}_j(N_1,\ldots,N_{k-1};i|p)$ and $\mathbb{C}^{(2)}_j(N_1,\ldots,N_{k-1};i|p)$. For  $\lambda\in\mathbb{C}_j(N_1,\ldots,N_{k-1};i|p)$, we set $r$ to be the mark of the odd part $\lambda_p^{(1)}+1$ in $GG(\lambda)$ when $\lambda_p^{(1)}$ is an even part. It is easy to see that $r\geq 2$.

 \begin{itemize}
 \item[(1)] $\mathbb{C}^{(1)}_j(N_1,\ldots,N_{k-1};i|p)$ is the set of partitions $\lambda$ in $\mathbb{C}_j(N_1,\ldots,N_{k-1};i|p)$ such that there exists a $1$-marked even part  $\lambda_p^{(1)}+3$ in $GG(\lambda)$ when $\lambda_p^{(1)}$ is an odd part  or there exists a $r$-marked  even part $\lambda_p^{(1)}+4$ in $GG(\lambda)$  when $\lambda_p^{(1)}$ is an even part.

 \item[(2)] $\mathbb{C}^{(2)}_j(N_1,\ldots,N_{k-1};i|p)$ is the set of partitions $\lambda$  in $\mathbb{C}_j(N_1,\ldots,N_{k-1};i|p)$ such that  there does not exist a $1$-marked even part $\lambda_p^{(1)}+3$ in $GG(\lambda)$ when $\lambda_p^{(1)}$ is an odd part  or there does not exist a $r$-marked  even part  $\lambda_p^{(1)}+4$ in $GG(\lambda)$  when $\lambda_p^{(1)}$ is an even part.
 \end{itemize}

For $1< p\leq N_1$, we divide $\mathbb{\overline{C}}_j(N_1,\ldots,N_{k-1};i|p)$  into the following two disjoint subsets  $\mathbb{\overline{C}}^{(1)}_j(N_1,\ldots,N_{k-1};i|p)$ and
 $\mathbb{\overline{C}}^{(2)}_j(N_1,\ldots,N_{k-1};i|p)$. For $\mu\in\mathbb{\overline{C}}_j(N_1,\ldots,N_{k-1};i,p)$,
  if $\mu_{p-1}^{(1)}$ is an even part, then set $r'$ to be the mark of the odd part $\mu_{p-1}^{(1)}+1$ in $GG(\mu)$. It is easy to see that   $\mu_{p-1}^{(1)}\geq 5$ and $r'\geq 2$.

 \begin{itemize}
 \item[(1)] $\mathbb{\overline{C}}^{(1)}_j(N_1,\ldots,N_{k-1};i|p)$ is the set of partitions $\mu$ in $\mathbb{\overline{C}}_j(N_1,\ldots,N_{k-1};i|p)$ such that there exists a $1$-marked even part $\mu_{p-1}^{(1)}-3$ in $GG(\mu)$
       if $\mu_{p-1}^{(1)}$ is an odd part or there exists a $r'$-marked even part $\mu_{p-1}^{(1)}-2$ in $GG(\mu)$ if $\mu_{p-1}^{(1)}$ is an even part.

 \item[(2)] $\mathbb{\overline{C}}^{(2)}_j(N_1,\ldots,N_{k-1};i|p)$ is the set of partitions $\mu$ in $\mathbb{\overline{C}}_j(N_1,\ldots,N_{k-1};i|p)$ such that there does not exist a $1$-marked even part $\mu_{p-1}^{(1)}-3$ in $GG(\mu)$
       if $\mu_{p-1}^{(1)}$ is an odd part or there does not exist a $r'$-marked even part  $\mu_{p-1}^{(1)}-2$ in $GG(\mu)$ if $\mu_{p-1}^{(1)}$ is an even part.
 \end{itemize}

\begin{lem}\label{varphi1}
For $1< p\leq N_1$, there is a bijection $\Theta_{1,p}$ between  $\mathbb{C}^{(1)}_j(N_1,\ldots,N_{k-1};i|p)$ and   $\mathbb{\overline{C}}^{(1)}_j(N_1,\ldots,N_{k-1};i|p)$. Moreover, for $\lambda \in \mathbb{C}^{(1)}_j(N_1,\ldots,N_{k-1};i|p)$ and $\mu=\Theta_{1,p}(\lambda)\in\mathbb{\overline{C}}^{(1)}_j(N_1,\ldots,N_{k-1};i|
p)$, we have $|\mu|=|\lambda|+2$.
 \end{lem}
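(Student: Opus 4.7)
The plan is to construct $\Theta_{1,p}$ as an explicit local modification of the G\"ollnitz-Gordon marking of $\lambda$ around $\lambda_p^{(1)}$, and then to verify the weight increase $|\mu|=|\lambda|+2$ and the bijectivity with $\overline{\mathbb{C}}^{(1)}_j(N_1,\ldots,N_{k-1};i|p)$. Since the authors' opening remark tells us that the $j=1$ version of this construction is essentially the one already given in Section 5 of \cite{he-ji-wang-zhao-2019}, the only substantive additional task is to verify that the same map behaves well with respect to the extra parity condition (4) of Theorem \ref{Gordon-Gollnitz-even} that distinguishes $j=0$ from $j=1$.

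First I would describe $\Theta_{1,p}$ by cases on the parity of $\lambda_p^{(1)}$. In the subcase where $\lambda_p^{(1)}$ is odd and $\lambda_p^{(1)}+3$ is a $1$-marked (even) part of $\lambda$, one performs a local rearrangement that raises both parts by $1$, producing new $1$-marked parts $\lambda_p^{(1)}+1$ and $\lambda_p^{(1)}+4$ so that the new $\mu_{p-1}^{(1)}=\lambda_p^{(1)}+4$ becomes odd. In the subcase where $\lambda_p^{(1)}$ is even (so the odd part $\lambda_p^{(1)}+1$ lies at some mark $r\geq 2$) and the $r$-marked even part $\lambda_p^{(1)}+4$ is present, a three-part local rearrangement yields the required structure at position $p-1$. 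In both subcases, one checks directly that the row-counts $N_1,\ldots,N_{k-1}$ are preserved, that $|\mu|-|\lambda|=2$, and that $\mu$ lies in $\overline{\mathbb{C}}^{(1)}_j(N_1,\ldots,N_{k-1};i|p)$. The inverse is the reversal of each local modification and provides bijectivity.

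It remains to treat the case $j=0$, which is the heart of this lemma. By Theorem \ref{parity k-1 sequence}, condition (4) of Theorem \ref{Gordon-Gollnitz-even} holds for $\mu$ if and only if every $(k-1)$-band $\{\mu^{(k-1)}_q\}_{k-1}$ is good, and by Lemma \ref{parity k-1 sequence-over} the goodness of a $(k-1)$-band in $\mu$ is controlled by the local parity $\sum [\cdot/2]+O(\cdot) \pmod 2$ computed over the modified window. Since $\Theta_{1,p}$ only alters parts in a narrow range around $\lambda_p^{(1)}$, only $(k-1)$-bands meeting this range can be affected; I would verify case by case that the modification swaps an odd and an even part at adjacent sizes (odd subcase) or permutes three parts without net parity change (even subcase), so the key parity invariant is preserved. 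The main obstacle I anticipate is the bookkeeping in the even subcase, where the three altered parts carry two distinct marks $1$ and $r$, and one must ensure that every $(k-1)$-band containing at least one of them is re-examined and shown to remain good via Lemma \ref{parity k-1 sequence-over}.
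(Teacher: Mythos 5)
Your proposal follows essentially the same route as the paper: an explicit local modification near $\lambda_p^{(1)}$ that raises two parts by $1$ each (so $|\mu|=|\lambda|+2$), an explicit inverse map, and, for $j=0$, a reduction via Theorem \ref{parity k-1 sequence} to checking that each $(k-1)$-band induced by a $(k-1)$-marked part stays good, carried out by a case analysis of where $\mu^{(k-1)}_s$ sits relative to the modified window. One small correction: in the even subcase the paper's map also alters only two parts --- the $r$-marked $2t+1$ becomes $2t+2$ and the $r$-marked $2t+4$ becomes $2t+5$, with the $1$-marked $\lambda_p^{(1)}=2t$ left untouched --- so no three-part rearrangement is needed and the parity bookkeeping is identical to the odd subcase.
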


  \pf If $\lambda_p^{(1)}$ is an odd part, set $\lambda_p^{(1)}={2t+1}$, then there is a 1-marked $2t+4$ in $GG(\lambda)$, that is, $\lambda_{p-1}^{(1)}=2t+4$; if $\lambda_p^{(1)}$ is an even part, set $\lambda_p^{(1)}=2t$, then there is a $r$-marked ${2t+1}$ and a $r$-marked $2t+4$ in $GG(\lambda)$.
For $1< p\leq N_1$, define $\mu=\Theta_{1,p}(\lambda)$  as follows.

\begin{itemize}
\item If $\lambda_p^{(1)}={2t+1}$, then replace $\lambda_p^{(1)}={2t+1}$ by  $2t+2$ and replace  $\lambda_{p-1}^{(1)}=2t+4$ by ${2t+5}$.

 \item If $\lambda_p^{(1)}=2t$, then replace the $r$-marked ${2t+1}$ by  $2t+2$ and replace the $r$-marked $2t+4$  by ${2t+5}$.

\end{itemize}

Let $\lambda^{(k-1)}=(\lambda^{(k-1)}_1,\lambda^{(k-1)}_2,\ldots, \lambda^{(k-1)}_{N_{k-1}})$ and $\mu^{(k-1)}=(\mu^{(k-1)}_1,\mu^{(k-1)}_2,\ldots, \mu^{(k-1)}_{N_{k-1}})$ be the $(k-1)$-marked parts in $GG(\lambda)$ and $GG(\mu)$, respectively. We just need to show that  if   $\lambda$ is a partition in $\mathbb{C}^{(1)}_0(N_1,\ldots,N_{k-1};i|p)$, then $\{\mu^{(k-1)}_s\}_{k-1}$ is   good for $1\leq s\leq N_{k-1}$. Assume that  $\lambda$ is a partition in $\mathbb{C}^{(1)}_0(N_1,\ldots,N_{k-1};i,p)$, then by Theorem \ref{parity k-1 sequence} we have $\{\lambda^{(k-1)}_s\}_{k-1}$ is   good for $1\leq s\leq N_{k-1}$.

For each $s\in[1,N_{k-1}]$, by the construction of $\mu$, we see that $2t+1$ and $2t+3$ do not occur in $\mu$, and so $\mu^{(k-1)}_s\neq 2t+1$, $2t+3$.  To show that $\{\mu^{(k-1)}_s\}_{k-1}$ is   good, we consider the following five cases.

Case 1: $\mu^{(k-1)}_s\leq 2t$ or $\mu^{(k-1)}_s\geq 2t+7$. By the construction of $\mu$, we see that $\{\mu^{(k-1)}_s\}_{k-1}=\{\lambda^{(k-1)}_s\}_{k-1}$ and $O_{\mu}(\mu^{(k-1)}_s)=O_{\lambda}(\lambda^{(k-1)}_s)$. Note that $\{\lambda^{(k-1)}_s\}_{k-1}$ is   good, so we get that  $\{\mu^{(k-1)}_s\}_{k-1}$ is good.

Case 2: $\mu^{(k-1)}_s=2t+2$. By the construction of $\mu$, we see that $f_{2t}(\mu)=f_{2t}(\lambda)$, $f_{2t+1}(\mu)=f_{2t+1}(\lambda)-1$, $f_{2t+2}(\mu)=f_{2t+2}(\lambda)+1$ and  $O_{\mu}(2t+2)=O_{\lambda}(2t+2)-1$. We have
\[
\begin{split}
&[\mu^{(k-1)}_{s,1}/2]+[\mu^{(k-1)}_{s,2}/2]+\cdots+[\mu^{(k-1)}_{s,k-1}/2]\\
&\qquad =tf_{2t}(\mu)+tf_{2t+1}(\mu)+(t+1)f_{2t+2}(\mu)\\
&\qquad =tf_{2t}(\lambda)+t(f_{2t+1}(\lambda)-1)+(t+1)(f_{2t+2}(\lambda)+1)\\
&\qquad =tf_{2t}(\lambda)+tf_{2t+1}(\lambda)+(t+1)f_{2t+2}(\lambda)+1\\
&\qquad =[\lambda^{(k-1)}_{s,1}/2]+[\lambda^{(k-1)}_{s,2}/2]+\cdots+[\lambda^{(k-1)}_{s,k-1}/2]+1\\
&\qquad \equiv i-1+O_\lambda(\lambda^{(k-1)}_{s,1})+1=i-1+O_\lambda(2t+2)+1\\
&\qquad \equiv i-1+O_\mu(2t+2)=i-1+O_\mu(\mu^{(k-1)}_{s,1})\pmod2.
\end{split}
\]
So, $\{\mu^{(k-1)}_s\}_{k-1}$ is  good.

Case 3: $\mu^{(k-1)}_s=2t+4$. In this case, we have $\mu^{(k-1)}_s=\lambda^{(k-1)}_s=2t+4$, $f_{2t+2}(\mu)=f_{2t+2}(\lambda)+1$, $f_{2t+3}(\mu)=f_{2t+3}(\lambda)=0$, $f_{2t+4}(\mu)=f_{2t+4}(\lambda)-1$ and  $O_{\mu}(2t+4)=O_{\lambda}(2t+4)-1$. So, we obtain that 
\[
\begin{split}
&[\mu^{(k-1)}_{s,1}/2]+[\mu^{(k-1)}_{s,2}/2]+\cdots+[\mu^{(k-1)}_{s,k-1}/2]\\
&\qquad =(t+1)f_{2t+2}(\mu)+(t+2)f_{2t+4}(\mu)\\
&\qquad =(t+1)(f_{2t+2}(\lambda)+1)+(t+2)(f_{2t+4}(\lambda)-1)\\
&\qquad =(t+1)f_{2t+2}(\lambda)+(t+2)f_{2t+4}(\lambda)-1\\
&\qquad =[\lambda^{(k-1)}_{s,1}/2]+[\lambda^{(k-1)}_{s,2}/2]+\cdots+[\lambda^{(k-1)}_{s,k-1}/2]-1\\
&\qquad \equiv i-1+O_\lambda(\lambda^{(k-1)}_{s,1})-1=i-1+O_\lambda(2t+4)-1\\
&\qquad =i-1+O_\mu(2t+4)=i-1+O_\mu(\mu^{(k-1)}_{s,1})\pmod2.
\end{split}
\]
It follows that $\{\mu^{(k-1)}_s\}_{k-1}$ is  good.

Case 4: $\mu^{(k-1)}_s=2t+5$. By the definition of G\"ollnitz-Gordon marking, we see that $\mu^{(k-1)}_{s,2}=\cdots=\mu^{(k-1)}_{s,k-1}=2t+4$. Moreover, by the construction of $\mu$, we have $\lambda^{(k-1)}_{s,1}=\lambda^{(k-1)}_{s,2}=\cdots=\lambda^{(k-1)}_{s,k-1}=2t+4$ and $O_\mu(2t+5)=O_\lambda(2t+4)$. So
\[
\begin{split}
&[\mu^{(k-1)}_{s,1}/2]+[\mu^{(k-1)}_{s,2}/2]+\cdots+[\mu^{(k-1)}_{s,k-1}/2]\\
&\qquad =(t+2)f_{2t+4}(\mu)+(t+2)f_{2t+5}(\mu)=(t+2)(k-2)+(t+2)\\
&\qquad =(t+2)(k-1)=(t+2)f_{2t+4}(\lambda)\\
&\qquad =[\lambda^{(k-1)}_{s,1}/2]+[\lambda^{(k-1)}_{s,2}/2]+\cdots+[\lambda^{(k-1)}_{s,k-1}/2]\\
&\qquad \equiv i-1+O_\lambda(\lambda^{(k-1)}_{s,1})=i-1+O_\lambda(2t+4)\\
&\qquad =i-1+O_\mu(2t+5)=i-1+O_\mu(\mu^{(k-1)}_{s,1})\pmod2,
\end{split}
\]
which implies that  $\{\mu^{(k-1)}_s\}_{k-1}$ is   good.

Case 5: $\mu^{(k-1)}_s=2t+6$. By the construction of $\mu$, we see that $\mu^{(k-1)}_s=\lambda^{(k-1)}_s=2t+6$, $f_{2t+4}(\mu)=f_{2t+4}(\lambda)-1$, $f_{2t+5}(\mu)=f_{2t+5}(\lambda)+1$, $f_{2t+6}(\mu)=f_{2t+6}(\lambda)$ and  $O_{\mu}(2t+6)=O_{\lambda}(2t+6)$. We have
\[
\begin{split}
&[\mu^{(k-1)}_{s,1}/2]+[\mu^{(k-1)}_{s,2}/2]+\cdots+[\mu^{(k-1)}_{s,k-1}/2]\\
&\qquad =(t+2)f_{2t+4}(\mu)+(t+2)f_{2t+5}(\mu)+(t+3)f_{2t+6}(\mu)\\
&\qquad =(t+2)(f_{2t+4}(\lambda)-1)+(t+2)(f_{2t+5}(\lambda)+1)+(t+3)f_{2t+6}(\lambda)\\
&\qquad =(t+2)f_{2t+4}(\lambda)+(t+2)f_{2t+5}(\lambda)+(t+3)f_{2t+6}(\lambda)\\
&\qquad =[\lambda^{(k-1)}_{s,1}/2]+[\lambda^{(k-1)}_{s,2}/2]+\cdots+[\lambda^{(k-1)}_{s,k-1}/2]\\
&\qquad \equiv i-1+O_\lambda(\lambda^{(k-1)}_{s,1})=i-1+O_\lambda(2t+6)\\
&\qquad = i-1+O_\mu(2t+6)=i-1+O_\mu(\mu^{(k-1)}_{s,1})\pmod2.
\end{split}
\]
Therefore, $\{\mu^{(k-1)}_s\}_{k-1}$ is good. Hence, we conclude that  $\{\mu^{(k-1)}_s\}_{k-1}$ is  good.  

  To prove that $\Theta_{1,p}$ is a bijection, we construct the inverse map $\Lambda_{1,p}$ of $\Theta_{1,p}$. If $\mu_{p-1}^{(1)}$ is an odd part, set $\mu_{p-1}^{(1)}={2t+5}$, then there is a $1$-marked $2t+2$ in $GG(\mu)$. If $\mu_{p-1}^{(1)}$ is an even part, set $\mu_{p-1}^{(1)}=2t+4$,  then there are $r'$-marked $2t+2$ and $r'$-marked ${2t+5}$ in $GG(\mu)$. For $1< p\leq N_1$, define $\lambda=\Lambda_{1,p}(\mu)$ as follows.
 \begin{itemize}
 \item If  $\mu_{p-1}^{(1)}={2t+5}$, then  replace $\mu_{p}^{(1)}=2t+2$ by ${2t+1}$ and replace
      $\mu_{p-1}^{(1)}={2t+5}$ by $2t+4$.

\item If $\mu_{p-1}^{(1)}=2t+4$, then replace the $r'$-marked $2t+2$  by ${2t+1}$ and replace the  $r'$-marked   ${2t+5}$ by $2t+4$.

 \end{itemize}
It can be verified that $\Lambda_{1,p}$ is the inverse map of $\Theta_{1,p}$. This completes the proof.  \qed

\begin{lem}\label{varphi2}
For $1< p\leq N_1$, there is a bijection $\Theta_{2,p}$ between   $\mathbb{C}^{(2)}_j(N_1,\ldots,N_{k-1};i|p)$ and  $\mathbb{\overline{C}}^{(2)}_j(N_1,\ldots,N_{k-1};i|p)$.
Moreover, for $\lambda \in \mathbb{C}^{(2)}_j(N_1,\ldots,N_{k-1};i|p)$ and $\mu=\Theta_{2,p}(\lambda)\in\mathbb{\overline{C}}^{(2)}_j(N_1,\ldots,N_{k-1};i|p)$, we have $|\mu|=|\lambda|+2$.
 \end{lem}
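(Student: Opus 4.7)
The plan is to mirror the strategy used for Lemma \ref{varphi1}. Since the excerpt explicitly states that the main content of this subsection was already proved in \cite{he-ji-wang-zhao-2019} for the case $j=1$, the map $\Theta_{2,p}$ itself is known: it is a local, weight-increasing-by-two, length-preserving transformation that swaps the odd-type/even-type roles of $\lambda_{p-1}^{(1)}$ and $\lambda_p^{(1)}$. The novelty here is that, for $j=0$, we must verify that $\Theta_{2,p}$ sends $\mathbb{C}^{(2)}_0(N_1,\ldots,N_{k-1};i|p)$ bijectively onto $\mathbb{\overline{C}}^{(2)}_0(N_1,\ldots,N_{k-1};i|p)$, which by Theorem \ref{parity k-1 sequence} reduces to checking that the goodness of every induced $(k-1)$-band $\{\mu^{(k-1)}_s\}_{k-1}$ is preserved.

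First I would recall the explicit construction of $\Theta_{2,p}$ from Section 5 of \cite{he-ji-wang-zhao-2019}. Because $\lambda$ lies in the subset $\mathbb{C}^{(2)}_j$, none of the nearby parts exploited in the short swap of Lemma \ref{varphi1} are present; instead, $\Theta_{2,p}$ performs a cascading shift that pushes each successive obstructing part up by $1$ until a free slot is reached, resulting in a partition $\mu$ with $|\mu|=|\lambda|+2$ and $\ell(\mu)=\ell(\lambda)$ that differs from $\lambda$ only on a contiguous window $W$ of parts containing $\lambda_p^{(1)}$.

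Next, I would fix $1\leq s\leq N_{k-1}$ and classify $\{\mu^{(k-1)}_s\}_{k-1}$ according to whether the interval $[\mu^{(k-1)}_{s,k-1},\mu^{(k-1)}_{s,1}]$ intersects $W$. If the two are disjoint, then $\{\mu^{(k-1)}_s\}_{k-1}=\{\lambda^{(k-1)}_s\}_{k-1}$ and $O_\mu(\mu^{(k-1)}_{s,1})=O_\lambda(\lambda^{(k-1)}_{s,1})$, so goodness is inherited from $\lambda$. If they overlap, I would compute
\[
\Delta_s=\sum_{l=1}^{k-1}[\mu^{(k-1)}_{s,l}/2]-\sum_{l=1}^{k-1}[\lambda^{(k-1)}_{s,l}/2]
\]
by tracking how each unit shift performed by $\Theta_{2,p}$ changes the frequencies $f_{2t},f_{2t+1},f_{2t+2},\ldots$ inside the band, exactly as was done in Cases~2--5 of the proof of Lemma \ref{varphi1}. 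I would then balance $\Delta_s$ against the corresponding change $O_\mu(\mu^{(k-1)}_{s,1})-O_\lambda(\lambda^{(k-1)}_{s,1})$ and verify that the two changes coincide modulo $2$ in every subcase, forcing the congruence condition \eqref{cong-cond-o} to persist.

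The main obstacle will be that the cascading window $W$ of $\Theta_{2,p}$ can in principle be longer than the three-position window used in Lemma \ref{varphi1}, so the number of overlap subcases grows; one has to argue that although $W$ may intersect several $(k-1)$-bands, the net parity shift inside any single band is always $0\pmod 2$. After this verification, bijectivity follows by exhibiting the inverse $\Lambda_{2,p}$ as the reverse cascading shift, and an entirely symmetric parity computation shows that $\Lambda_{2,p}$ also preserves goodness, completing the proof.
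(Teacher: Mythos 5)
Your overall architecture --- take the map as known, reduce the $j=0$ case to checking goodness of the induced $(k-1)$-bands via Theorem \ref{parity k-1 sequence}, do a band-by-band parity computation, and exhibit an explicit inverse --- is exactly the paper's. But your description of what $\Theta_{2,p}$ actually does is wrong, and this is not a cosmetic slip. The map is not a cascading shift over a contiguous window $W$: it changes exactly two parts, each by one unit. Writing $\lambda_p^{(1)}=2t+1$ (with $r=1$) or $\lambda_p^{(1)}=2t$ (with an $r$-marked $2t+1$, $r\geq 2$), and $\lambda_{p-1}^{(1)}=2b+2$ with $l$ the largest mark of the parts $2b+2$, the map replaces the $r$-marked $2t+1$ by $2t+2$ and the $l$-marked $2b+2$ by $2b+3$; the two altered positions may be far apart, but nothing in between moves.

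Because of this, your plan addresses a phantom difficulty (a long window meeting many bands, with ``the number of overlap subcases grow[ing]'') and misses the actual crux of the $j=0$ verification: one must localize the $(k-1)$-marked parts of $\mu$ relative to the two altered positions. Concretely, the paper shows $\mu_s^{(k-1)}\neq 2t+1,2t+3,2t+4$ (the last using precisely the defining property of $\mathbb{C}^{(2)}_j$, namely the absence of an $r$-marked $2t+4$) and that any $\mu_s^{(k-1)}\in[2t+2,2b+2]$ must equal $2t+2$; only after this does each band see at most one unit increment, so the frequency computations of Cases 2--5 of Lemma \ref{varphi1} apply verbatim. Without that localization your proposed comparison of $\Delta_s$ with the change in $O_\mu$ has no control over which configurations can occur, so the argument as planned does not close. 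Likewise the inverse $\Lambda_{2,p}$ must be given by the two corresponding unit decrements (keyed to the smallest mark $r'$ of the parts $2t+2$ in $GG(\mu)$ and the mark $l'$ of $2b+3$), not by reversing a cascade.
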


 \pf If $\lambda_p^{(1)}$ is an odd part, set $r=1$ and $\lambda_p^{(1)}={2t+1}$, then there does not exist a $1$-marked $2t+4$ in $GG(\lambda)$, that is $\lambda_{p-1}^{(1)}\geq 2t+6$. If $\lambda_p^{(1)}$ is an even part, set $\lambda_p^{(1)}=2t$, then there is a $r$-marked ${2t+1}$  and  there does not exist a $r$-marked $2t+4$ in $GG(\lambda)$, where $r\geq 2$. Let $\lambda_{p-1}^{(1)}=2b+2$ and let $l$ be the largest mark of the parts $2b+2$ in $GG(\lambda)$. If $\lambda_p^{(1)}={2t+1}$, then $b\geq t+2$. If $\lambda_p^{(1)}=2t$, then $b\geq t+1$. For $1< p\leq N_1$, define $\mu=\Theta_{2,p}(\lambda)$ as follows.
\begin{itemize}
\item If $r=1$, then replace $\lambda_p^{(1)}={2t+1}$ by  $2t+2$ and replace the $l$-marked $2b+2$ by ${2b+3}$.

\item If $r\geq 2$, then replace the $r$-marked ${2t+1}$ by  $2t+2$ and replace the $l$-marked $2b+2$ by ${2b+3}$.

\end{itemize}

Let $\lambda^{(k-1)}=(\lambda^{(k-1)}_1,\lambda^{(k-1)}_2,\ldots, \lambda^{(k-1)}_{N_{k-1}})$ and $\mu^{(k-1)}=(\mu^{(k-1)}_1,\mu^{(k-1)}_2,\ldots, \mu^{(k-1)}_{N_{k-1}})$ be the $(k-1)$-marked parts in $GG(\lambda)$ and $GG(\mu)$, respectively.  We just need to show that  if   $\lambda$ is a partition in $\mathbb{C}^{(2)}_0(N_1,\ldots,N_{k-1};i|p)$, then $\{\mu^{(k-1)}_s\}_{k-1}$ is   good for $1\leq s\leq N_{k-1}$. Assume that $\lambda$ is a partition in $\mathbb{C}^{(2)}_0(N_1,\ldots,N_{k-1};i,p)$, then by Theorem \ref{parity k-1 sequence} it follows that $\{\lambda^{(k-1)}_s\}_{k-1}$ is good for $1\leq s\leq N_{k-1}$.

For each $s\in[1,N_{k-1}]$,
by the construction of $\mu$, we see that $2t+1$ and $2t+3$ do not occur in $\mu$, and so $\mu^{(k-1)}_s\neq 2t+1$, $2t+3$. Then, we proceed to show that $\mu^{(k-1)}_s\neq 2t+4$. If $\lambda_p^{(1)}={2t+1}$, then there is no $1$-marked $2t+2$, $2t+3$ and $2t+4$ in $GG(\lambda)$. If $\lambda_p^{(1)}=2t$, then there is no $r$-marked $2t+2$, $2t+3$ and $2t+4$ in $GG(\lambda)$. By definition of G\"ollnitz-Gordon marking, we see that there is no $(k-1)$-marked $2t+4$ in $GG(\lambda)$. By the construction of $\mu$, we see that there is no $(k-1)$-marked $2t+4$ in $GG(\mu)$, and so $\mu^{(k-1)}_s\neq 2t+4$.

Now, we turn to show that if $\mu^{(k-1)}_s\in[2t+2,2b+2]$, then $\mu^{(k-1)}_s=2t+2$. If $b=t+1$, then recall that $\mu^{(k-1)}_s\neq 2t+3$, $2t+4$, we have $\mu^{(k-1)}_s=2t+2$. If $b\geq t+2$, then by the construction of $\mu$, we have $\mu_p^{(1)}\leq {2t+2}$, $\mu_{p-1}^{(1)}\geq 2b+2$ and $\mu^{(k-1)}_s\neq 2b+2$, which implies that $\mu^{(k-1)}_s\not\in [2t+5,2b+2]$. Again by $\mu^{(k-1)}_s\neq 2t+3$ and $2t+4$, we have   $\mu^{(k-1)}_s=2t+2$. Hence we obtain that $\mu^{(k-1)}_s=2t+2$ if $\mu^{(k-1)}_s\in[2t+2,2b+2]$. Then, with the similar argument in the proof of Lemma \ref{varphi1}, we can obtain that $\{\mu^{(k-1)}_s\}_{k-1}$ is good.

 To prove that $\Theta_{2,p}$ is a bijection, we construct the inverse map $\Lambda_{2,p}$ of
$\Theta_{2,p}$. If $\mu_{p-1}^{(1)}$ is an odd part, then set $l'=1$, $\mu_{p-1}^{(1)}={2b+3}$ and $\mu_{p}^{(1)}=2t$, where $b\geq t+1$. If $\mu_{p-1}^{(1)}$ is an even part, set $\mu_{p-1}^{(1)}=2b+2$ and $\mu_{p}^{(1)}=2t$ where $b\geq t+1$, then there is a $l'$-marked ${2b+3}$ and there is no $l'$-marked $2b$ in $GG(\mu)$, where $l'\geq 2$. If there exist parts $2t+2$ in $GG(\mu)$, then set $r'$ be the smallest mark of the parts $2t+2$ in $GG(\mu)$, where $r'\geq2$, otherwise we set $r'=1$.
For $1< p\leq N_1$, define $\lambda=\Lambda_{2,p}(\mu)$ as follows.

 \begin{itemize}
 \item If $r'=1$, then replace $\mu_{p}^{(1)}=2t$ by ${2t-1}$ and replace the $l'$-marked ${2b+3}$ by $2b+2$.

\item If $r'\geq 2$, then replace the $r'$-marked $2t+2$ by ${2t+1}$ and replace the $l'$-marked ${2b+3}$ by $2b+2$.

 \end{itemize}
It can be verified  that $\Lambda_{2,p}$ is the inverse map of $\Theta_{2,p}$. This completes the proof.  \qed

\noindent{\bf Proof of Lemma \ref{N-2-1-lem}.} For $1< p\leq N_1$ and $\lambda\in\mathbb{{C}}_j(N_1,\ldots,N_{k-1};i|p)$, define
\[\mu=\Theta_p(\lambda)=\left\{\begin{array}{cc}\Theta_{1,p}(\lambda),&\text{ if }\lambda\in\mathbb{C}^{(1)}_j(N_1,\ldots,N_{k-1};i|p);\\[5pt]
\Theta_{2,p}(\lambda),&\text{ if }\lambda\in\mathbb{C}^{(2)}_j(N_1,\ldots,N_{k-1};i|p).
\end{array}\right.\]
Combining Lemmas \ref{varphi1} and   \ref{varphi2}, we complete the proof of Lemma \ref{N-2-1-lem}. \qed

 \begin{lem}\label{N-2-1-lem-b}
 There is a bijection $\Theta_{1}$ between  $\mathbb{C}_j(N_1,\ldots,N_{k-1};i|1)$ and  $\overrightarrow{\mathbb{C}}_j(N_1,\ldots,N_{k-1};i\break$$|1)$. Moreover, for $\lambda \in \mathbb{{C}}_j(N_1,\ldots,N_{k-1};i|1)$ and $\mu=\Theta_{1}(\lambda)\in\overrightarrow{\mathbb{C}}_j(N_1,\ldots,N_{k-1};i|1)$, we have $|\mu|=|\lambda|+1$.
 \end{lem}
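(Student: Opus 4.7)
The plan is to adapt the construction of $\Theta_{1,p}$ from Lemma~\ref{varphi1} to the edge case $p=1$, in which the ``partner'' $1$-marked part $\lambda_{p-1}^{(1)}$ whose shift is part of the $p>1$ construction does not exist. Given $\lambda\in\mathbb{C}_j(N_1,\ldots,N_{k-1};i|1)$, I define $\mu=\Theta_1(\lambda)$ by two sub-cases driven by the odd-type structure of $\lambda_1^{(1)}$: if $\lambda_1^{(1)}=2t+1$ is odd, replace this $1$-marked part by $2t+2$; if $\lambda_1^{(1)}=2t$ is even, the odd-type hypothesis yields an odd $2t+1$ in $\lambda$ with some mark $r\geq 2$, and I replace this $r$-marked $2t+1$ by $2t+2$. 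In either sub-case a single odd part is bumped up by one, so $|\mu|=|\lambda|+1$.

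I then verify that $\mu\in\overrightarrow{\mathbb{C}}_j(N_1,\ldots,N_{k-1};i|1)$ and that the row sizes $N_1,\ldots,N_{k-1}$ of the G\"ollnitz-Gordon marking are preserved. The key structural observation is that since $\lambda_1^{(1)}$ is the largest $1$-marked part of $\lambda$, the marking rule forces strong restrictions on parts exceeding $\lambda_1^{(1)}$. A direct inspection of the mark conditions around the modified entry shows that the new $2t+2$ receives mark $1$ in the odd sub-case and mark $r$ in the even sub-case, with all other marks unchanged. Moreover $\mu_1^{(1)}$ becomes even type: in the odd sub-case $\mu_1^{(1)}=2t+2$ with no odd $2t+3$ above it, and in the even sub-case $\mu_1^{(1)}=2t$ is preserved while the odd $2t+1$ is no longer in $\mu$.

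For the $j=0$ case I invoke Theorem~\ref{parity k-1 sequence} and verify that every $(k-1)$-band $\{\mu^{(k-1)}_s\}_{k-1}$ is good. Only bands whose $(k-1)$-marked representative $\mu^{(k-1)}_s$ lies in a small neighbourhood of the modified part can change, and a case analysis parallel to Cases~1--5 in the proof of Lemma~\ref{varphi1} --- now with only the single substitution $2t+1\mapsto 2t+2$ --- shows that each such band remains good: the shift $\sum_l[\mu^{(k-1)}_{s,l}/2]=\sum_l[\lambda^{(k-1)}_{s,l}/2]+1$ is exactly cancelled modulo $2$ by the shift $O_\mu(\cdot)=O_\lambda(\cdot)-1$ at the affected threshold, preserving the congruence~\eqref{cong-cond-o}. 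The main obstacle is this parity bookkeeping, which is more delicate than in Lemma~\ref{varphi1} because the compensating second substitution $2t+4\mapsto 2t+5$ present there is absent here, so the cancellation must be exhibited on the strength of a single substitution.

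Finally I construct the inverse $\Lambda_1$: given $\mu\in\overrightarrow{\mathbb{C}}_j(N_1,\ldots,N_{k-1};i|1)$ with $\mu_1^{(1)}=2t'$, apply the even-sub-case inverse (replace an $r$-marked $2t'+2$ in $GG(\mu)$ by the odd $2t'+1$) if such a part with $r\geq 2$ exists, and otherwise apply the odd-sub-case inverse (replace $\mu_1^{(1)}=2t'$ by $2t'-1$). That this dichotomy is unambiguous follows from the fact that in the odd sub-case output, $\lambda_1^{(1)}=2t+1$ being the largest $1$-marked part of $\lambda$ precludes any part $\geq 2t+4$ in $\lambda$, hence also in $\mu$, so no $r$-marked $2t'+2=2t+4$ can occur; conversely, in the even sub-case output, the new $r$-marked $2t'+2$ is always present. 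A direct verification then gives $\Theta_1\circ\Lambda_1=\mathrm{id}$ and $\Lambda_1\circ\Theta_1=\mathrm{id}$, completing the proof.
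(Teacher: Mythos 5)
Your construction coincides with the paper's: replace the $1$-marked $2t+1$ (odd sub-case) or the $r$-marked $2t+1$ (even sub-case) by $2t+2$, check the marks are preserved, handle $j=0$ via Theorem \ref{parity k-1 sequence} with the single-substitution parity cancellation, and invert according to whether a part $2t'+2$ with mark at least $2$ is present --- so this is essentially the same proof. The only point to tighten is that when several parts $2t'+2$ occur in $\mu$ the inverse must replace the one of \emph{smallest} mark (as the paper specifies), which is indeed the part created by the forward map since any pre-existing $2t'+2$ necessarily carries a strictly larger mark.
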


 \pf By definition, we see that $\lambda_{1}^{(1)}$ is odd type. Let $r$ be the mark of the odd part $\lambda_{1}^{(1)}$ (resp. $\lambda_{1}^{(1)}$+1) if $\lambda_{1}^{(1)}$ is odd  (resp. even) in $GG(\lambda)$.  More precisely,  if $\lambda_{1}^{(1)}$ is an odd part,  set $\lambda_{1}^{(1)}={2t+1}$, then $r=1$. If $\lambda_{1}^{(1)}$ is an even part, set $\lambda_{1}^{(1)}=2t$, then there is $r$-marked ${2t+1}$ in $GG(\lambda)$. Define $\mu=\Theta_{1}(\lambda)$ by replacing the $r$-marked ${2t+1}$  by    $2t+2$.

Let $\lambda^{(k-1)}=(\lambda^{(k-1)}_1,\lambda^{(k-1)}_2,\ldots, \lambda^{(k-1)}_{N_{k-1}})$ and $\mu^{(k-1)}=(\mu^{(k-1)}_1,\mu^{(k-1)}_2,\ldots, \mu^{(k-1)}_{N_{k-1}})$ be the $(k-1)$-marked parts in $GG(\lambda)$ and $GG(\mu)$, respectively. We just need to show that  if   $\lambda$ is a partition in $\mathbb{C}_0(N_1,\ldots,N_{k-1};i|1)$, then $\{\mu^{(k-1)}_s\}_{k-1}$ is   good for $1\leq s\leq N_{k-1}$. Assume that $\lambda$ is a partition in $\mathbb{C}_0(N_1,\ldots,N_{k-1};i|1)$, then $\{\lambda^{(k-1)}_s\}_{k-1}$ is  good  for $1\leq s\leq N_{k-1}$.

For each $s\in[1,N_{k-1}]$, by the construction of $\mu$, we have $\mu^{(k-1)}_s\leq 2t+2$ and $\mu^{(k-1)}_s\neq 2t+1$.  To show that $\{\mu^{(k-1)}_s\}_{k-1}$ is   good, we consider the following two cases.

Case 1:  $\mu^{(k-1)}_s\leq 2t$. By the construction of $\mu$, we have $\{\mu^{(k-1)}_s\}_{k-1}=\{\lambda^{(k-1)}_s\}_{k-1}$ and $O_{\mu}(\mu^{(k-1)}_s)=O_{\lambda}(\lambda^{(k-1)}_s)$. Note that $\{\lambda^{(k-1)}_s\}_{k-1}$ is good, so   $\{\mu^{(k-1)}_s\}_{k-1}$ is also  good.

Case 2:   $\mu^{(k-1)}_s=2t+2$. With the similar argument of Case 2 in the proof of Lemma \ref{varphi1}, we can obtain that $\{\mu^{(k-1)}_s\}_{k-1}$ is good.

In either case, we obtain that $\{\mu^{(k-1)}_s\}_{k-1}$ is  good. To prove that $\Theta_{1}$ is a bijection, we construct the inverse map $\Lambda_{1}$ of
$\Theta_{1}$. By definition,  $\mu_{1}^{(1)}$ is  even type, set $\mu_{1}^{(1)}=2t$. Let $r'$ be the smallest mark of the largest even parts in $GG(\mu)$, that is, if there exist even parts $2t+2$ in $\mu$, then $r'\geq 2$; otherwise,  $r'=1$.  Define $\lambda=\Lambda_{1}(\mu)$ as follows.

\begin{itemize}
 \item  If $r'=1$, then replace $\mu_{1}^{(1)}=2t$ by ${2t-1}$.

\item If $r'\geq 2$, then replace the $r'$-marked $2t+2$ by ${2t+1}$.

 \end{itemize}
It can be verified  that $\Lambda_{1}$ is the inverse map of $\Theta_{1}$. This completes the proof.  \qed

Iterating   the bijections in Lemma \ref{N-2-1-lem} $p-1$ times as well as the bijection $\Theta_{1}$ in Lemma \ref{N-2-1-lem-b}, we  get a bijection $\Theta_{(p)}$ between   $\mathbb{C}_j(N_1,\ldots,N_{k-1};i|p)$ and   $\overrightarrow{\mathbb{C}}_j(N_1,\ldots,N_{k-1};i|p)$.

 \begin{lem}\label{N-2-2-lem}
For $1\leq p\leq N_1$, there is a bijection $\Theta_{(p)}$ between $\mathbb{C}_j(N_1,\ldots,N_{k-1};i|p)$ and   $\overrightarrow{\mathbb{C}}_j(N_1,\ldots,N_{k-1};i|p)$. Moreover, for  $\lambda \in \mathbb{{C}}_j(N_1,\ldots,N_{k-1};i|p)$ and $\mu=\Theta_{(p)}(\lambda)\in\overrightarrow{\mathbb{C}}_j(N_1,\ldots,N_{k-1};i|p)$, we have $|\mu|=|\lambda|+2p-1$.
 \end{lem}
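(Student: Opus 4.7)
The plan is to proceed by induction on $p$, composing the bijections established in Lemmas \ref{N-2-1-lem} and \ref{N-2-1-lem-b}.

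For the base case $p=1$, set $\Theta_{(1)}:=\Theta_{1}$, which by Lemma \ref{N-2-1-lem-b} is already a bijection between $\mathbb{C}_j(N_1,\ldots,N_{k-1};i|1)$ and $\overrightarrow{\mathbb{C}}_j(N_1,\ldots,N_{k-1};i|1)$ with weight increment $1=2\cdot 1-1$.

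For the inductive step, suppose $\Theta_{(p-1)}$ has been constructed for some $p\geq 2$. Given $\lambda\in\mathbb{C}_j(N_1,\ldots,N_{k-1};i|p)$, first apply $\Theta_{p}$ from Lemma \ref{N-2-1-lem} to obtain $\lambda'\in\overline{\mathbb{C}}_j(N_1,\ldots,N_{k-1};i|p)$ with $|\lambda'|=|\lambda|+2$. A direct comparison of the defining conditions yields the inclusion
\[\overline{\mathbb{C}}_j(N_1,\ldots,N_{k-1};i|p)\subseteq \mathbb{C}_j(N_1,\ldots,N_{k-1};i|p-1),\]
since both sides require $\lambda'^{(1)}_{1},\ldots,\lambda'^{(1)}_{p-2}$ to be even type and $\lambda'^{(1)}_{p-1}$ to be odd type, while the additional constraint in $\overline{\mathbb{C}}_j(\ldots;i|p)$ (that $\lambda'^{(1)}_p$ be even type) is compatible with $\mathbb{C}_j(\ldots;i|p-1)$. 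Hence $\Theta_{(p-1)}$ may be applied to $\lambda'$, and I define $\Theta_{(p)}(\lambda):=\Theta_{(p-1)}(\lambda')$. The weight then satisfies $|\Theta_{(p)}(\lambda)|=|\lambda'|+(2(p-1)-1)=|\lambda|+2p-1$, as claimed.

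The main obstacle is to verify that $\mu:=\Theta_{(p)}(\lambda)$ actually belongs to $\overrightarrow{\mathbb{C}}_j(N_1,\ldots,N_{k-1};i|p)$, not merely to $\overrightarrow{\mathbb{C}}_j(N_1,\ldots,N_{k-1};i|p-1)$; that is, one must verify that $\mu^{(1)}_p$ is of even type. Since $\lambda'^{(1)}_p$ is already even type in $\lambda'$, and the subsequent bijections $\Theta_{p-1},\Theta_{p-2},\ldots,\Theta_{1}$ (whose composition realises $\Theta_{(p-1)}$) only alter parts at or adjacent to the $1$-marked parts of index at most $p-1$, as is transparent from the explicit case analyses in the proofs of Lemmas \ref{varphi1}, \ref{varphi2} and \ref{N-2-1-lem-b}, the $p$-th $1$-marked part is untouched throughout the composition, so its even-type status persists. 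Bijectivity of $\Theta_{(p)}$ then follows from the bijectivity of each factor, with the inverse obtained by composing $\Theta_{(p-1)}^{-1}$ (supplied by induction) with the inverse $\Lambda_{p}$ of $\Theta_{p}$ constructed in Lemmas \ref{varphi1} and \ref{varphi2}, completing the induction.
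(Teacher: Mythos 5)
Your proposal is correct and follows essentially the same route as the paper: the paper defines $\Theta_{(p)}=\Theta_{1}\Theta_{2}\cdots\Theta_{p}$, i.e.\ exactly your composition of $\Theta_p$ followed by $\Theta_{(p-1)}$, and cites Lemmas \ref{N-2-1-lem} and \ref{N-2-1-lem-b}. Your write-up in fact supplies more detail than the paper does, namely the inclusion $\overline{\mathbb{C}}_j(N_1,\ldots,N_{k-1};i|p)\subseteq\mathbb{C}_j(N_1,\ldots,N_{k-1};i|p-1)$ that makes the composition well defined and the observation that the $p$-th $1$-marked part stays of even type throughout.
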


  \pf Define $\Theta_{(p)}=\Theta_{1}\Theta_{2}\cdots \Theta_p$, then by Lemmas \ref{N-2-1-lem} and \ref{N-2-1-lem-b}, we complete the proof. \qed

  \subsection{The combination $C_{p,t}$ and the division $D_{p,t}$}

 In this subsection, we assume that $j=0$ or $1$, $(2k+j)/2>2$, $k\geq i\geq 2$, $N_1\geq\cdots\geq N_{k-1}\geq0$, $t\geq 0$ and $0\leq p\leq N_1$. Let $\lambda$ be a partition in $\mathbb{C}_j(N_1,\ldots,N_{k-1};i)$. For convenience, we set  $\lambda^{(1)}_{0}=+\infty$ and $\lambda^{(1)}_{N_{1}+1}=-\infty$.  We will prepare two sets and then establish a bijection between these two subsets.
 
 Let $\mathbb{C}^{<}_j(N_1,\ldots,N_{k-1};i|p,t)$ denote the set of   partitions $\lambda$ in $\mathbb{C}_j(N_1,\ldots,N_{k-1};i)$ satisfying the following conditions: \begin{itemize}
\item[\rm {(1)}] $\lambda^{(1)}_{p+1}+2\leq 2t+1<\lambda^{(1)}_p$;
\item[\rm {(2)}] if $p\geq 1$, then  $\lambda\in\overrightarrow{\mathbb{C}}_j(N_1,\ldots,N_{k-1};i|p)$.
\end{itemize}

   It is worth mentioning  that if  $\mathbb{C}^{<}_j(N_1,\ldots,N_{k-1};i|p,t)$ is non-empty, then $p+t\geq N_1$. If $p=N_1$, then it is obviously right. If $p<N_1$, then by the definition of G\"ollnitz-Gordon marking, we see that
   \[2t+1\geq \lambda^{(1)}_{p+1}+2\geq \cdots\geq \lambda^{(1)}_{N_1}+2(N_1-p)\geq 2(N_1-p)+1,\]
   which leads to $p+t\geq N_1$. Hence, $p+t\geq N_1$.

 Let $\mathbb{C}^{\lhd}_j(N_1+1,\ldots,N_{k-1};i|p,t)$ denote the set of   partitions $\mu$ in $\mathbb{C}_j(N_1+1,\ldots,N_{k-1};i)$ satisfying the following conditions:
 \begin{itemize}
\item[\rm {(1)}] $\mu^{(1)}_{p+1}= 2t+1$;

\item[\rm {(2)}] if  $p\geq 1$, then $\mu\in\mathbb{C}_j(N_1+1,\ldots,N_{k-1};i|p)$;

\item[\rm {(3)}]  if there exist parts $2t+2$ in $\mu$, then $p\geq 1$ and $\mu^{(1)}_{p}= 2t+3$.
 \end{itemize}
 
We proceed to show that there is a bijection   between $\mathbb{C}^{<}_j(N_1,\ldots,N_{k-1};i|p,t)$ and $\mathbb{C}^{\lhd}_j(N_1+1,\ldots,N_{k-1};i|p,t)$, from which we find that if  $\mathbb{C}^{\lhd}_j(N_1+1,\ldots,N_{k-1};i|p,t)$ is non-empty, then  we have $p+t\geq N_1$.

   \begin{lem}\label{deltagammathmbb} For $1\leq p\leq N_1$,  there is a bijection, the combination $C_{p,t}$, between   $\mathbb{C}^{<}_j(N_1,\ldots,N_{k-1};i|p,t)$ and  $\mathbb{C}^{\lhd}_j(N_1+1,\ldots,N_{k-1};i|p,t)$. Moreover, for  $\lambda \in \mathbb{C}^{<}_j(N_1,\ldots,\break N_{k-1};i|p,t)$ and $\mu=C_{p,t}(\lambda)\in\mathbb{C}^{\lhd}_j(N_1+1,\ldots,N_{k-1};i|p,t)$, we have $|\mu|=|\lambda|+2t+2$.
 \end{lem}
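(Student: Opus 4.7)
The plan is to define the combination $C_{p,t}$ as a local operation that inserts a new $1$-marked odd part of value $2t+1$ at position $p+1$ in the $1$-marked sequence of $\lambda$, and simultaneously bumps up an adjacent part by a single unit so that the total weight gain is $|\mu|-|\lambda|=(2t+1)+1=2t+2$. The inequalities $\lambda^{(1)}_{p+1}+2\le 2t+1<\lambda^{(1)}_p$ guarantee that the insertion slot is correct and that the G\"ollnitz-Gordon marking forces the inserted part to be $1$-marked without disturbing any other mark, so that $\mu^{(1)}_{p+1}=2t+1$ automatically. For $p\ge 1$, the bump is chosen to convert $\lambda^{(1)}_p$ from even type (forced by $\lambda\in\overrightarrow{\mathbb{C}}_j(N_1,\ldots,N_{k-1};i|p)$) to odd type in $\mu$: if $\lambda^{(1)}_p\ge 2t+4$, I set $\mu^{(1)}_p=\lambda^{(1)}_p+1$ (which is odd, hence odd type); if $\lambda^{(1)}_p=2t+2$, I set $\mu^{(1)}_p=2t+3$, which is exactly what condition (3) in the definition of $\mathbb{C}^{\lhd}_j$ demands. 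For $p=0$ the odd-type condition is vacuous and the $+1$ adjustment is placed so as to ensure $2t+2\notin\mu$, again in accordance with condition (3). The inverse $D_{p,t}$ deletes $\mu^{(1)}_{p+1}=2t+1$ and reverses the bump; condition (3) is precisely what makes the reverse bump single-valued, since it tells us whether the reversal should land on $2t+3\mapsto 2t+2$ or on some $2u+1\mapsto 2u$.

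The verification then has three steps. First, I check that the G\"ollnitz-Gordon marking of $\mu$ has the correct row vector $(N_1+1,N_2,\ldots,N_{k-1})$: the inserted part and the bumped part both slot into their expected marks, using the even-type hypothesis on $\lambda^{(1)}_1,\ldots,\lambda^{(1)}_p$ to rule out marking collisions. Second, the three defining inequalities of $\mathbb{C}_1(k,i)$ are preserved---in particular, $f_{2t+1}(\mu)\le 1$ follows because $2t+1<\lambda^{(1)}_p$ together with the marking of $\lambda$ forces $2t+1\notin\lambda$. Third, when $j=0$, I invoke Theorem \ref{parity k-1 sequence} to reduce the congruence check to the $(k-1)$-bands induced by the $(k-1)$-marked parts of $\mu$, and use Lemma \ref{parity k-1 sequence-over} to localise the parity computation to a handful of bands near the modified parts.

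The main obstacle is the parity verification in the case $j=0$: the insertion of the odd part $2t+1$ shifts $O_\mu(\cdot)$ on every $(k-1)$-band whose parts exceed $2t+1$, while the bump $\lambda^{(1)}_p\mapsto \lambda^{(1)}_p+1$ simultaneously shifts either an $O_\mu$ count or a $[\cdot/2]$-sum on the bands that meet $\lambda^{(1)}_p$; one has to verify that these two effects cancel modulo $2$ on every affected band, exactly so that the ``good'' status established for $\lambda$ carries over to $\mu$. A secondary obstacle is the boundary subcase $\lambda^{(1)}_p=2t+2$ with $\lambda^{(1)}_{p-1}$ close by (e.g.\ $\lambda^{(1)}_{p-1}=2t+4$): the bump to $2t+3$ creates an odd $1$-marked part at position $p$, and the strict odd-difference rule with $\mu^{(1)}_{p-1}$ has to be double-checked. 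Here the even-type hypothesis on $\lambda^{(1)}_{p-1}$ (so $\lambda^{(1)}_{p-1}$ is even and $\lambda^{(1)}_{p-1}+1\notin\lambda$) supplies exactly the slack needed to conclude.
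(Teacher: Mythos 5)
Your construction is essentially the paper's: add $2t+1$ as a new part and raise one copy of $\lambda^{(1)}_p=2b+2$ to $2b+3$, with the inverse deleting $2t+1$ and reversing that raise, and the $j=0$ case reduced to the induced $(k-1)$-bands via Theorem \ref{parity k-1 sequence} and Lemma \ref{parity k-1 sequence-over}. Two small caveats: when $\lambda^{(1)}_p\geq 2t+4$ occurs with multiplicity greater than one, the paper raises the copy with the largest mark, so $\mu^{(1)}_p$ stays equal to the even part $2b+2$ and is odd type only because $2b+3$ now occurs with a higher mark (your literal claim $\mu^{(1)}_p=\lambda^{(1)}_p+1$ fails then, though the multiset operation, and hence $\mu$, is the same); and the $p=0$ case you discuss belongs to the separate Lemma \ref{deltagammathmbb-p0}, where no bump is performed and the weight gain is $2t+1$ rather than $2t+2$.
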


\pf For $k\geq i\geq 2$, let $\lambda$ be a partition in $\mathbb{C}^{<}_1(N_1,\ldots,N_{k-1};i|p,t)$. Note that $p\geq 1$, then we have  $\lambda\in\overrightarrow{\mathbb{C}}_j(N_1,\ldots,N_{k-1};i|p)$, which implies that  $\lambda^{(1)}_p$ is even type, and so  $\lambda^{(1)}_p$ is an even part. We may write $\lambda^{(1)}_p=2b+2$.

If $b=t$, namely $\lambda^{(1)}_p=2t+2$, then set  $r=1$. If $b\geq t+1$, namely $\lambda^{(1)}_p\geq 2t+4$, then set $r$ be the largest mark of parts $2b+2$ in $GG(\lambda)$. Define the combination $\mu=C_{p,t}(\lambda)$ by adding  $2t+1$ as a new part to $\lambda$ and replacing the $r$-marked $2b+2$ in $GG(\lambda)$ by $2b+3$ to obtain $\mu$.

Obviously, $|\mu|=|\lambda|+2t+2$. To show that $\mu\in\mathbb{C}^{\lhd}_1(N_1+1,\ldots,N_{k-1};i|p,t)$, we first prove that  $\mu\in\mathbb{C}_1(N_1+1,\ldots,N_{k-1};i)$. It is enough to verify that $\mu$ satisfies the following three conditions:
\begin{itemize}
    \item[(A)] There is only one $2t+1$ in $\mu$ and $2t+1$ is marked with $1$ in $GG(\mu)$; 
    
    \item[(B)] The new generated part $2b+3$   is also marked with $r$ in $GG(\mu)$.
    
     \item[(C)] The marks of the unchanged parts in $GG(\mu)$ are the same as those in $GG(\lambda)$.
    
\end{itemize}

{\noindent Condition (A).} Under the condition that $\lambda^{(1)}_{p+1}+2\leq 2t+1<\lambda^{(1)}_p$, we  deduce that  $\lambda^{(1)}_{p+1}\leq 2t-1$ and $\lambda^{(1)}_p\geq 2t+2$. So, there is no part $2t+1$ in $\lambda$. Otherwise, we must have $\lambda^{(1)}_p=2t+1$, which contradicts the fact that $\lambda^{(1)}_p\geq 2t+2$. Hence, there is only one $2t+1$ in $\mu$, which is marked with $1$ in $GG(\mu)$.

{\noindent Condition (B).} It suffices to show that there are $1$-marked, \ldots, $(r-1)$-marked $2b+2$ in $GG(\lambda)$ when $r\geq 2$. If $r\geq 2$, then $b\geq t+1$, namely, $\lambda^{(1)}_p\geq 2t+4$.  Note that $\lambda^{(1)}_{p+1}\leq 2t-1$, so  $2t+1$,  \ldots, $2b$ and $2b+1$ do not occur in $\lambda$. Under the condition that  there is a $r$-marked $2b+2$ in $GG(\lambda)$,  by the definition of G\"ollnitz-Gordon marking, we see that there are $1$-marked, \ldots, $(r-1)$-marked $2b+2$ in $GG(\lambda)$. 

{\noindent Condition (C).}  From the proofs of Conditions (A) and (B), we find that the marks of the unchanged parts less than $2b+3$ in $GG(\mu)$ are the same as those in $GG(\lambda)$. Moreover, by the definition of G\"ollnitz-Gordon marking, we derive that  the marks of the unchanged parts greater than $2b+3$ in $GG(\mu)$ are the same as those in $GG(\lambda)$. Hence, we conclude that $\mu$ is a partition in $\mathbb{C}_1(N_1+1,\ldots,N_{k-1};i)$.

Now, we turn to show that $\mu\in\mathbb{C}^{\lhd}_1(N_1+1,\ldots,N_{k-1};i|p,t)$.  From the proof above, we have $\mu^{(1)}_s=\lambda^{(1)}_s$ for $1\leq s\leq p-1$, $\mu^{(1)}_{p+1}=2t+1$, $\mu^{(1)}_{s}=\lambda^{(1)}_{s-1}$ for $p+2\leq s\leq N_1+1$. 
If $r=1$, then we have $\mu^{(1)}_p=\lambda^{(1)}_p+1=2b+3$. If $r\geq 2$, then we have $\mu^{(1)}_p=\lambda^{(1)}_p=2b+2$ and there is a $r$-marked $\lambda^{(1)}_p+1=2b+3$ in $GG(\mu)$. So, $\mu^{(1)}_p$ is odd type. Moreover, by the construction of $\mu$, we find that  $\mu^{(1)}_s$ is even type for  $1\leq s\leq p-1$, and so we deduce that  $\mu\in\mathbb{C}_1(N_1+1,\ldots,N_{k-1};i|p)$.

Assume that  there exist parts $2t+2$ in $\mu$, then the marks of parts $2t+2$ are greater than $1$ in $GG(\mu)$ since $\mu^{(1)}_{p+1}=2t+1$.
From the proof above, we find that the marks of parts $2t+2$ in $GG(\mu)$ are the same as those in $GG(\lambda)$. By the definition of G\"ollnitz-Gordon marking, we derive that there is a $1$-marked $2t$ or $2t+1$ or $2t+2$ in $GG(\lambda)$. Note that $\lambda^{(1)}_{p+1}\leq 2t-1$ and $\lambda^{(1)}_{p}\geq 2t+2$, so we get $\lambda^{(1)}_{p}=2t+2$. Therefore, we deduce  that $r=1$ and $\mu^{(1)}_{p}=2t+3$.
Hence, we conclude that $\mu$ is a partition in $\mathbb{C}^{\lhd}_1(N_1+1,\ldots,N_{k-1};i|p,t)$.

We proceed to prove that for $k\geq 3$ and $k\geq i\geq 2$, if $\lambda\in\mathbb{C}^{<}_0(N_1,\ldots,N_{k-1};i|p,t)$, then $\mu\in\mathbb{C}^{\lhd}_0(N_1+1,\ldots,N_{k-1};i|p,t)$. Let $\lambda^{(k-1)}=(\lambda^{(k-1)}_1,\lambda^{(k-1)}_2,\ldots, \lambda^{(k-1)}_{N_{k-1}})$ and $\mu^{(k-1)}=(\mu^{(k-1)}_1,\mu^{(k-1)}_2,\ldots, \mu^{(k-1)}_{N_{k-1}})$ be the $(k-1)$-marked parts in $GG(\lambda)$ and $GG(\mu)$, respectively. Assume that   $\lambda$ is a partition in $\mathbb{C}^{<}_0(N_1,\ldots,N_{k-1};i|p,t)$, then  $\{\lambda^{(k-1)}_s\}_{k-1}$ is  good for $1\leq s\leq N_{k-1}$. We aim to show that  $\{\mu^{(k-1)}_s\}_{k-1}$ is  good for $1\leq s\leq N_{k-1}$.

For each $s\in[1,N_{k-1}]$, we have $\lambda^{(1)}_{p+1}\leq 2t-1$ and $\lambda^{(1)}_{p}=2b+2\geq 2t+2$, and so $\lambda^{(k-1)}_s\not \in[2t+1,2b+1]$. By the construction of $\mu$, we derive that $\mu^{(k-1)}_s\not \in[2t+1,2b+1]$. To show that  $\{\mu^{(k-1)}_s\}_{k-1}$ is  good,  we consider the following four cases.

Case 1: $\mu^{(k-1)}_s\leq 2t$ or $\mu^{(k-1)}_s\geq 2b+5$. By the construction of $\mu$, we have $\{\mu^{(k-1)}_s\}_{k-1}=\{\lambda^{(k-1)}_s\}_{k-1}$. Moreover, if $\mu^{(k-1)}_s\leq 2t$, then $O_{\mu}(\mu^{(k-1)}_s)=O_{\lambda}(\lambda^{(k-1)}_s)$; if  $\mu^{(k-1)}_s\geq 2b+5$, then $O_{\mu}(\mu^{(k-1)}_s)= O_{\lambda}(\lambda^{(k-1)}_s)+2$. This implies that $O_{\mu}(\mu^{(k-1)}_s)\equiv O_{\lambda}(\lambda^{(k-1)}_s)\pmod2$. Note that $\{\lambda^{(k-1)}_s\}_{k-1}$ is good, so $\{\mu^{(k-1)}_s\}_{k-1}$ is  good.

Case 2: $\mu^{(k-1)}_s=2b+2$. By the construction of $\mu$, we find that  $b=t$ and $r=1$. Moreover, we have $\mu^{(k-1)}_s=\lambda^{(k-1)}_s=2b+2=2t+2$, $f_{2t}(\mu)=f_{2t}(\lambda)$, $f_{2t+1}(\mu)=f_{2t+1}(\lambda)+1$, $f_{2t+2}(\mu)=f_{2t+2}(\lambda)-1$ and  $O_{\mu}(2t+2)=O_{\lambda}(2t+2)+1$. So we get
\[
\begin{split}
&[\mu^{(k-1)}_{s,1}/2]+[\mu^{(k-1)}_{s,2}/2]+\cdots+[\mu^{(k-1)}_{s,k-1}/2]\\
&\qquad =tf_{2t}(\mu)+tf_{2t+1}(\mu)+(t+1)f_{2t+2}(\mu)\\
&\qquad =tf_{2t}(\lambda)+t(f_{2t+1}(\lambda)+1)+(t+1)(f_{2t+2}(\lambda)-1)\\
&\qquad =tf_{2t}(\lambda)+tf_{2t+1}(\lambda)+(t+1)f_{2t+2}(\lambda)-1\\
&\qquad =[\lambda^{(k-1)}_{s,1}/2]+[\lambda^{(k-1)}_{s,2}/2]+\cdots+[\lambda^{(k-1)}_{s,k-1}/2]-1\\
&\qquad \equiv i-1+O_\lambda(\lambda^{(k-1)}_{s,1})-1=i-1+O_\lambda(2t+2)-1\\
&\qquad \equiv i-1+O_\mu(2t+2)=i-1+O_\mu(\mu^{(k-1)}_{s,1})\pmod2,
\end{split}
\]
which implies that $\{\mu^{(k-1)}_s\}_{k-1}$ is good.

Case 3: $\mu^{(k-1)}_s=2b+3$. By the construction of $\mu$, we find that $O_{\mu}(\mu^{(k-1)}_s)= O_{\lambda}(\lambda^{(k-1)}_s)+2\equiv O_{\lambda}(\lambda^{(k-1)}_s)\pmod2$. With the similar argument of Case 4 in the proof of Lemma \ref{varphi1}, we derive that $\{\mu^{(k-1)}_s\}_{k-1}$ is  good.

Case 4: $\mu^{(k-1)}_s=2b+4$. By the construction of $\mu$, we observe that $O_{\mu}(\mu^{(k-1)}_s)= O_{\lambda}(\lambda^{(k-1)}_s)+2\equiv O_{\lambda}(\lambda^{(k-1)}_s)\pmod2$.  With the similar argument of Case 5 in the proof of Lemma \ref{varphi1}, we   obtain that $\{\mu^{(k-1)}_s\}_{k-1}$ is   good.

Therefore, we conclude that $\{\mu^{(k-1)}_s\}_{k-1}$ is good. Hence, by Theorem \ref{parity k-1 sequence}, we see that
$\mu$ is a partition in $\mathbb{C}^{\lhd}_0(N_1+1,\ldots,N_{k-1};i|p,t)$.

 To prove that the combination $C_{p,t}$ is a bijection, we construct the inverse map  of
$C_{p,t}$, the division $D_{p,t}$. Assume that $\mu$ is a partition in $\mathbb{C}^{\lhd}_j(N_1+1,\ldots,N_{k-1};i|p,t)$, then we see that   $\mu^{(1)}_{p+1}=2t+1$ and $\mu_{p}^{(1)}$ is odd type. If $\mu_{p}^{(1)}$ is an odd part, then we may write $\mu_{p}^{(1)}=2b+3$ and set $r=1$, where $b\geq t$. If $\mu_{p}^{(1)}$ is an even part, then we may write $\mu_{p}^{(1)}=2b+2$ and set $r$ be the mark of $2b+3$ in $GG(\mu)$, where $b\geq t+1$ and $r\geq 2$. Define $\lambda=D_{p,t}(\mu)$ by removing the part $\mu^{(1)}_{p+1}=2t+1$ from $\mu$ and replacing  the $r$-marked ${2b+3}$ in $GG(\mu)$ by   ${2b+2}$ to obtain $\lambda$. It can be verified  that the division $D_{p,t}$ is the inverse map of the combination $C_{p,t}$. This completes the proof.  \qed

   \begin{lem}\label{deltagammathmbb-p0} There is a bijection, the combination $C_{0,t}$, between   $\mathbb{C}^{<}_j(N_1,\ldots,N_{k-1};i|0,t)$ and  $\mathbb{C}^{\lhd}_j(N_1+1,\ldots,N_{k-1};i|0,t)$. Moreover, for  $\lambda \in \mathbb{C}^{<}_j(N_1,\ldots, N_{k-1};i|0,t)$ and $\mu=C_{0,t}(\lambda)\in\mathbb{C}^{\lhd}_j(N_1+1,\ldots,N_{k-1};i|0,t)$, we have $|\mu|=|\lambda|+2t+1$.
 \end{lem}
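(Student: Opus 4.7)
The plan is to mimic the proof of Lemma \ref{deltagammathmbb} at the boundary value $p = 0$, where the combination operation degenerates into simply adjoining $2t+1$ as a new part of $\lambda$ without modifying any existing part. Because no even part of the form $2b+2$ is promoted to $2b+3$, the weight gain drops from $2t+2$ (in Lemma \ref{deltagammathmbb}) to $2t+1$, which matches the asserted identity $|\mu| = |\lambda| + 2t+1$.

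First I would record a structural observation: any $\lambda \in \mathbb{C}^{<}_j(N_1,\ldots,N_{k-1};i|0,t)$ has all parts $\leq 2t$, with odd parts $\leq 2t-1$. Indeed, if $y$ is a part of $\lambda$ carrying a mark $\geq 2$ in $GG(\lambda)$, the G\"ollnitz-Gordon rule forces a $1$-marked part $z < y$ within distance $2$ of $y$, strictly if $y$ is odd, so $y \leq \lambda^{(1)}_1 + 2 \leq 2t+1$ with strict inequality when $y$ is odd; combined with the hypothesis $\lambda^{(1)}_1 \leq 2t-1$ for the $1$-marked parts themselves, this yields the stated bound and in particular $f_{2t+1}(\lambda) = f_{2t+2}(\lambda) = 0$. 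Consequently condition (3) of $\mathbb{C}^{\lhd}_j$ holds automatically, since no part $2t+2$ will appear in $\mu$.

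I would then define $\mu = C_{0,t}(\lambda)$ by adjoining $2t+1$ to $\lambda$. Since $2t+1$ strictly exceeds every existing part, its G\"ollnitz-Gordon mark depends only on possible parts of value $2t$ in $\lambda$, none of which is $1$-marked; thus $2t+1$ receives mark $1$ while all other marks are preserved. Hence $\mu^{(1)}_1 = 2t+1$ and $\mu \in \mathbb{C}_j(N_1+1, N_2, \ldots, N_{k-1}; i)$, once one checks the routine inequalities: $f_{2t+1}(\mu) = 1$; the relation $f_{2t}(\mu) + f_{2t+1}(\mu) + f_{2t+2}(\mu) = f_{2t}(\lambda) + 1 \leq k-1$ via $f_{2t}(\lambda) \leq k-2$ (since the parts equal to $2t$ in $\lambda$ carry distinct marks drawn from $\{2, 3, \ldots, k-1\}$); and $f_1(\mu) + f_2(\mu) \leq i-1$, with the only delicate sub-case being $t = 0, \lambda = \emptyset$, where $i \geq 2$ suffices. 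The inverse map $D_{0,t}$ removes the $1$-marked $2t+1$, and the bound $\lambda^{(1)}_1 \leq 2t-1$ follows from the strict form of the G\"ollnitz-Gordon rule applied to the two $1$-marked odd parts $\mu^{(1)}_1 = 2t+1$ and $\mu^{(1)}_2$, which requires $\mu^{(1)}_1 - \mu^{(1)}_2 > 2$.

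The only step that might look delicate is condition (4) in the case $j = 0$, which via Theorem \ref{parity k-1 sequence} reduces to verifying that every $(k-1)$-band $\{\mu^{(k-1)}_p\}_{k-1}$ is good. In contrast to the proof of Lemma \ref{deltagammathmbb}, where several cases must be traced through (Cases 1-4), here the step is immediate: every $(k-1)$-marked part satisfies $\mu^{(k-1)}_p = \lambda^{(k-1)}_p \leq 2t$, its induced $(k-1)$-band is identical in $GG(\lambda)$ and $GG(\mu)$, and $O_\mu(\mu^{(k-1)}_p) = O_\lambda(\lambda^{(k-1)}_p)$ since the added part $2t+1$ lies strictly above $\mu^{(k-1)}_p$. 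Thus goodness transfers from $\lambda$ to $\mu$ without any parity adjustment, and the proof is complete.
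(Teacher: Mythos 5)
Your proposal is correct and follows essentially the same route as the paper: adjoin $2t+1$ as a new part, observe that all parts of $\lambda$ are at most $2t$ so the new part receives mark $1$ and all other marks are preserved, transfer goodness of the $(k-1)$-bands verbatim in the $j=0$ case, and invert by deleting the $1$-marked $2t+1$. The only nitpick is that the G\"ollnitz--Gordon rule for the odd $1$-marked part $2t+1$ forces $\mu^{(1)}_1-\mu^{(1)}_2\geq 2$ rather than $>2$, which still yields the required bound $\lambda^{(1)}_1\leq 2t-1$.
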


\pf For $k\geq i\geq 2$, let $\lambda$ be a partition in $\mathbb{C}^{<}_1(N_1,\ldots,N_{k-1};i|0,t)$. Define $\mu=C_{0,t}(\lambda)$ by adding $2t+1$ as a new part of $\lambda$ to obtain $\mu$. Obviously, we have $|\mu|=|\lambda|+2t+1$.   Note that $\lambda^{(1)}_1=\lambda^{(1)}_{p+1}\leq 2t-1$, we find that the parts of $\lambda$ are not exceed to $2t$. So, $2t+1$ is marked with $1$ in $GG(\mu)$ and the marks of the remaining parts in $GG(\mu)$ are the same as those in $GG(\lambda)$. Therefore,  $\mu$ is a partition  in $\mathbb{C}_1(N_1+1,\ldots,N_{k-1};i)$ and $\mu^{(1)}_1=2t+1$, which implies that $\mu\in\mathbb{C}^{\lhd}_1(N_1+1,\ldots,N_{k-1};i|0,t)$.

We turn to show that for $k\geq 3$ and $k\geq i\geq 2$, if $\lambda\in\mathbb{C}^{<}_0(N_1,\ldots,N_{k-1};i|0,t)$, then $\mu\in\mathbb{C}^{\lhd}_0(N_1+1,\ldots,N_{k-1};i|0,t)$. Let $\lambda^{(k-1)}=(\lambda^{(k-1)}_1,\lambda^{(k-1)}_2,\ldots, \lambda^{(k-1)}_{N_{k-1}})$ and $\mu^{(k-1)}=(\mu^{(k-1)}_1,\mu^{(k-1)}_2,\ldots, \mu^{(k-1)}_{N_{k-1}})$ be the $(k-1)$-marked parts in $GG(\lambda)$ and $GG(\mu)$, respectively. Assume that   $\lambda$ is a partition in $\mathbb{C}^{<}_0(N_1,\ldots,N_{k-1};i|0,t)$, then  $\{\lambda^{(k-1)}_s\}_{k-1}$ is  good for $1\leq s\leq N_{k-1}$. We aim to show that  $\{\mu^{(k-1)}_s\}_{k-1}$ is  good for $1\leq s\leq N_{k-1}$.

For each $s\in[1,N_{k-1}]$, by the construction of $\mu$, we have $\mu^{(k-1)}_s=\lambda^{(k-1)}_s$, $\{\mu^{(k-1)}_s\}_{k-1}=\{\lambda^{(k-1)}_s\}_{k-1}$ and $O_{\mu}(\mu^{(k-1)}_s)=O_{\lambda}(\lambda^{(k-1)}_s)$. Note that $\{\lambda^{(k-1)}_s\}_{k-1}$ is good, so we get that  $\{\mu^{(k-1)}_s\}_{k-1}$ is  good.
Hence, by Theorem \ref{parity k-1 sequence}, we see that
$\mu$ is a partition in $\mathbb{C}^{\lhd}_0(N_1+1,\ldots,N_{k-1};i|0,t)$. 

 To prove that the combination $C_{0,t}$ is a bijection, we construct the inverse map  of
$C_{0,t}$, the division $D_{0,t}$. Assume that $\mu$ is a partition in $\mathbb{C}^{\lhd}_j(N_1+1,\ldots,N_{k-1};i|0,t)$, then we see that   $\mu^{(1)}_{1}=2t+1$. Define $\lambda=D_{0,t}(\mu)$ by removing the part $\mu^{(1)}_{1}=2t+1$ from $\mu$ to obtain $\lambda$. It can be verified  that the division $D_{0,t}$ is the inverse map of the combination $C_{0,t}$. This completes the proof.  \qed

\subsection{The insertion $I_{m}$ and the separation $S_{m}$}

In this subsection, we assume that $j=0$ or $1$, $(2k+j)/2>2$, $k\geq i\geq 2$ and $N_1\geq\cdots\geq N_{k-1}\geq0$. Let $\mu$ be a partition  in $\mathbb{C}_j(N_1+1,\ldots,N_{k-1};i)$ such that there exist odd parts in $\mu$. We say that $2t+1$ is a insertion part of $\mu$ if $2t+1$ is the largest odd part $2t+1$ of $\mu$, $\mu_{p+1}^{(1)}=2t+1$, where $0\leq p\leq N_1$, and $\mu$ satisfies either of the following two conditions:
\begin{itemize}
\item[\rm{(1)}] There do not exist parts $2t+2$ in $\mu$.

\item[\rm{(2)}] There exist parts $2t+2$ in $\mu$, and  there exists $s\in[1,p]$ such that  $\mu^{(1)}_{p+1-s}=2t+4s$ and there do not exist parts $2t+4s+2$ in $\mu$.
\end{itemize}
For $t\geq 0$ and $0\leq p\leq N_1$, let $\mathbb{C}^{=}_j(N_1+1,\ldots,N_{k-1};i|p,t)$ denote the set of   partitions $\mu$ in $\mathbb{C}_j(N_1+1,\ldots,N_{k-1};i)$ such that $\mu^{(1)}_{p+1}= 2t+1$ is a insertion part of $\mu$.

  \begin{thm}\label{deltagammathmbb-insertion} For $t\geq 0$ and $0\leq p\leq N_1$, there exists a bijection  $I_{p,t}$  between  $\mathbb{C}^{<}_j(N_1,\ldots,\break N_{k-1};i|p,t)$ and    $\mathbb{C}^{=}_j(N_1+1,\ldots,N_{k-1};i|p,t)$. Moreover, for  $\lambda \in \mathbb{C}^{<}_j(N_1,\ldots,N_{k-1};i|p,t)$ and $\mu=I_{p,t}(\lambda)\in\mathbb{C}^{=}_j(N_1+1,\ldots,N_{k-1};i|p,t)$, we have $|\mu|=|\lambda|+2p+2t+1$.
 \end{thm}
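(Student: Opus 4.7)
The plan is to define $I_{p,t}$ as a composition of previously constructed bijections. For $p=0$, set $I_{0,t}:=C_{0,t}$ from Lemma~\ref{deltagammathmbb-p0}; for $p\ge 1$, set $I_{p,t}:=\Theta_{(p)}\circ C_{p,t}$, with $C_{p,t}$ from Lemma~\ref{deltagammathmbb} and $\Theta_{(p)}$ from Lemma~\ref{N-2-2-lem}. The composition is well-defined because condition (2) in the definition of $\mathbb{C}^{\lhd}_j$ gives $\mathbb{C}^{\lhd}_j(N_1+1,\ldots,N_{k-1};i|p,t)\subseteq \mathbb{C}_j(N_1+1,\ldots,N_{k-1};i|p)$, which is the domain of $\Theta_{(p)}$. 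The weight identity is immediate: $|\mu|=|\lambda|+(2t+2)+(2p-1)=|\lambda|+2p+2t+1$ for $p\ge 1$, and by Lemma~\ref{deltagammathmbb-p0} for $p=0$.

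The case $p=0$ is essentially automatic. By Lemma~\ref{deltagammathmbb-p0}, $C_{0,t}$ is a bijection onto $\mathbb{C}^{\lhd}_j(N_1+1,\ldots,N_{k-1};i|0,t)$, so it suffices to identify this set with $\mathbb{C}^{=}_j(N_1+1,\ldots,N_{k-1};i|0,t)$. Condition (3) of $\mathbb{C}^{\lhd}$ specialized to $p=0$ forces $\mu$ to have no parts equal to $2t+2$; hence insertion-part condition (1) holds trivially, and since odd parts are $1$-marked and $\mu^{(1)}_1=2t+1$, the value $2t+1$ is the largest odd part of $\mu$.

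For $p\ge 1$, let $\mu':=C_{p,t}(\lambda)$ and $\mu:=\Theta_{(p)}(\mu')$. Each $\Theta_s$ inside $\Theta_{(p)}$ only modifies parts at or near the $1$-marked positions $s$ and $s-1$, all of which lie strictly above $(\mu')^{(1)}_p\ge 2t+3>2t+1$; consequently $\mu^{(1)}_{p+1}=(\mu')^{(1)}_{p+1}=2t+1$ is preserved. As $\mu\in\overrightarrow{\mathbb{C}}_j(N_1+1,\ldots;i|p)$, the parts $\mu^{(1)}_1,\ldots,\mu^{(1)}_p$ are all even, so the largest odd part of $\mu$ is $\mu^{(1)}_{p+1}=2t+1$. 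It remains to verify the insertion-part condition. If $\mu'$ has no parts $2t+2$, inspection of $\Theta_{1,p}$ and $\Theta_{2,p}$ from Lemmas~\ref{varphi1} and~\ref{varphi2} shows that the only new even parts produced are of the form $2c+2\ge 2t+4$ or new odd parts $2b+3$, so no $2t+2$ is ever created, giving condition (1). If instead $\mu'$ has parts $2t+2$, then $(\mu')^{(1)}_p=2t+3$ by condition (3) of $\mathbb{C}^{\lhd}$, and applying $\Theta_p$ raises this $1$-marked $2t+3$ to a $1$-marked $2t+4$. An induction on the successive applications $\Theta_p,\Theta_{p-1},\ldots$ then establishes that the chain $\mu^{(1)}_p=2t+4,\ \mu^{(1)}_{p-1}=2t+8,\ \mu^{(1)}_{p-2}=2t+12,\ldots$ extends precisely as long as parts $2t+4s+2$ remain in the intermediate partition, and terminates at some $s_0\in[1,p]$ for which no parts $2t+4s_0+2$ are present, matching insertion-part condition (2).

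The inverse is $I_{p,t}^{-1}:=D_{p,t}\circ \Theta_{(p)}^{-1}$ for $p\ge 1$ (and $D_{0,t}$ for $p=0$). For the reverse verification one observes that $\mathbb{C}^{=}_j(N_1+1,\ldots;i|p,t)\subseteq\overrightarrow{\mathbb{C}}_j(N_1+1,\ldots;i|p)$, since the insertion-part condition forces $\mu^{(1)}_1,\ldots,\mu^{(1)}_p$ to be even type, so $\Theta_{(p)}^{-1}$ is applicable; a mirror-image chain analysis then confirms $\Theta_{(p)}^{-1}(\mu)\in \mathbb{C}^{\lhd}_j$, and bijectivity follows from that of the components. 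The principal obstacle is the inductive chain analysis: each $\Theta_s$ has several subcases depending on the local shape of the $1$-marked sequence and of nearby even parts of higher mark, and one must coordinate these subcases with the ongoing chain $(2t+4),(2t+8),\ldots$ to confirm both each extension step and the termination step.
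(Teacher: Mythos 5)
Your proposal is correct and follows essentially the same route as the paper: the paper likewise defines $I_{p,t}=\Theta_{(p)}\circ C_{p,t}$ (with $\Theta_{(0)}$ the identity, so that the $p=0$ case reduces to $C_{0,t}$), verifies the insertion-part condition by exactly the chain argument you sketch (tracking the largest $s$ with $\lambda^{(1)}_{p+1-s}=2t+4s-2$ accompanied by higher-marked copies, and showing the chain in $\mu$ terminates where parts $2t+4s+2$ disappear), and inverts via $S_{p,t}=D_{p,t}\circ\Lambda_{(p)}$. The weight count $|\mu|=|\lambda|+(2t+2)+(2p-1)$ is also identical.
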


 \pf For convenience, we define $\Theta_{(0)}$ as the identity map. Then we define $\mu=I_{p,t}(\lambda)=\Theta_{(p)}(C_{p,t}(\lambda))$. By Lemmas \ref{N-2-2-lem}, \ref{deltagammathmbb} and \ref{deltagammathmbb-p0}, we find that $I_{p,t}$ is well-defined, $\mu\in \mathbb{C}_j(N_1+1,\ldots,N_{k-1};i)$, $\mu^{(1)}_{p+1}=2t+1$ is the largest odd part of $\mu$ and $|\mu|=|\lambda|+2p+2t+1$. We proceed to show that $\mu\in\mathbb{C}^{=}_j(N_1+1,\ldots,N_{k-1};i|p,t)$.
 
If $p=0$, then by Lemma  \ref{deltagammathmbb-p0}, we have  $\mu\in\mathbb{C}^{=}_1(N_1+1,\ldots,N_{k-1};i|0,t)$, which implies that $\mu\in\mathbb{C}^{=}_j(N_1+1,\ldots,N_{k-1};i|0,t)$.

 If $p\geq 1$, then it suffices to show that $\mu$ satisfies either condition (1) or condition (2) in the definition of an insertion part. 
 If there do not exist parts $2t+2$ in $\mu$, then it is clear that $\mu$  satisfies the condition (1)  in the definition of an insertion part. 
 
  Assume that there exist parts $2t+2$ in $\mu$, recall that $\mu^{(1)}_{p+1}=2t+1$, then we see that the marks of  parts $2t+2$ are greater than $1$ in $GG(\mu)$. By the construction of $\mu$, we find that  $\lambda^{(1)}_p=2t+2$ and there exist parts $2t+2$ with marks greater than $1$ in $GG(\lambda)$.
  Set $s$ be the largest integer such that $\lambda^{(1)}_{p+1-s}=2t+4s-2$ and there exist parts $2t+4s-2$ with marks greater than $1$ in $GG(\lambda)$. Using $\lambda^{(1)}_p=2t+2$, we deduce that $1\leq s\leq p$. It follows from the construction of $\mu$ that $\lambda^{(1)}_{p+1-s}=2t+4s-2$  is replaced by $2t+4s$ in $\mu$, and so  $\mu^{(1)}_{p+1-s}=2t+4s$. If there do not exist parts $2t+4s+2$ in $\lambda$, then by the construction of $\mu$, we see that there also do not exist parts $2t+4s+2$ in $\mu$. If there exist parts $2t+4s+2$ in $\lambda$, then by the choice of $s$, we find that $\lambda^{(1)}_{p-s}=2t+4s+2$ and there do not  parts  exist $2t+4s+2$ with marks greater than $1$ in $GG(\lambda)$. Again by the construction of $\mu$, we see that  $\lambda^{(1)}_{p-s}=2t+4s+2$ is replaced by $2t+4s+4$ in $\mu$, which implies that there do not exist parts $2t+4s+2$ in $\mu$. So, $\mu$  satisfies the condition (2)  in the definition of insertion part.  Hence, $\mu$ is a partition in $\mathbb{C}^{=}_j(N_1+1,\ldots,N_{k-1};i|p,t)$.

 To prove that $I_{p,t}$ is a bijection, we construct the inverse map of the insertion
$I_{p,t}$, the separation $S_{p,t}$. Define $\Lambda_{(0)}$ as the identity map. Then we define $\lambda=S_{p,t}(\mu)=D_{p,t}(\Lambda_{(p)}(\mu))$. By Lemmas \ref{N-2-2-lem}, \ref{deltagammathmbb} and \ref{deltagammathmbb-p0}, it can be verified  that the separation $S_{p,t}$ is the inverse map of the insertion $I_{p,t}$. This completes the proof.  \qed

For $m\geq N_1$, define
\[\mathbb{C}^{<}_j(N_1,\ldots,N_{k-1};i|m)=\bigcup_{p+t=m}\mathbb{C}^{<}_j(N_1,\ldots,N_{k-1};i|p,t),\]
and
\[\mathbb{C}^{=}_j(N_1+1,\ldots,N_{k-1};i|m)=\bigcup_{p+t=m}\mathbb{C}^{=}_j(N_1+1,\ldots,N_{k-1};i|p,t).\]
We establish a bijection between $\mathbb{C}^{<}_j(N_1,\ldots,N_{k-1};i|m)$ and $\mathbb{C}^{=}_j(N_1+1,\ldots,N_{k-1};i|m)$. 

 \begin{thm}\label{deltagammathmbb-insertion-all} For   $m\geq N_1$, there is a bijection, the insertion $I_{m}$, between   $\mathbb{C}^{<}_j(N_1,\ldots,\break N_{k-1};i|m)$ and $\mathbb{C}^{=}_j(N_1+1,\ldots,N_{k-1};i|m)$. Moreover, for  $\lambda \in \mathbb{C}^{<}_j(N_1,\ldots,N_{k-1};i|m)$ and $\mu=I_{m}(\lambda)\in\mathbb{C}^{=}_j(N_1+1,\ldots,N_{k-1};i|m)$, we have $|\mu|=|\lambda|+2m+1$.
 \end{thm}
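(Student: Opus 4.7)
The plan is to glue the bijections $I_{p,t}$ of Theorem \ref{deltagammathmbb-insertion} into a single bijection indexed by the coarser parameter $m$. Since $2p+2t+1=2m+1$ whenever $p+t=m$, the weight increment is the same for every $(p,t)$-slice, so once disjointness is established the rule $I_m|_{\mathbb{C}^{<}_j(\ldots|p,t)}:=I_{p,t}$ automatically yields $|\mu|=|\lambda|+2m+1$.

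The key step is to verify that both unions defining $\mathbb{C}^{<}_j(N_1,\ldots,N_{k-1};i|m)$ and $\mathbb{C}^{=}_j(N_1+1,\ldots,N_{k-1};i|m)$ are in fact disjoint unions. On the target side this is immediate from the notion of an insertion part: if $\mu\in\mathbb{C}^{=}_j(N_1+1,\ldots,N_{k-1};i|p,t)$, then $2t+1$ is characterized as the largest odd part of $\mu$ and $p+1$ as its position in the $1$-marked sequence of $GG(\mu)$, so the pair $(p,t)$ can be read off directly from $\mu$. On the source side, suppose toward contradiction that $\lambda$ belongs simultaneously to $\mathbb{C}^{<}_j(\ldots|p_1,t_1)$ and $\mathbb{C}^{<}_j(\ldots|p_2,t_2)$ with $p_1+t_1=p_2+t_2=m$ and $p_1<p_2$. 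The defining inequalities give $\lambda^{(1)}_{p_1+1}\leq 2t_1-1$ and $\lambda^{(1)}_{p_2}\geq 2t_2+2$, while the G\"ollnitz-Gordon marking rule forces any two consecutive $1$-marked parts to differ by at least $2$, so $\lambda^{(1)}_{p_1+1}-\lambda^{(1)}_{p_2}\geq 2(p_2-p_1-1)$. Combining these bounds and using $t_1-t_2=p_2-p_1$ produces the contradiction $2(p_2-p_1)\geq 2(p_2-p_1)+1$.

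With disjointness in hand, I would define $I_m$ on each slice by $I_{p,t}$ and its inverse by restricting $S_{p,t}$ to the corresponding slice of the target. Bijectivity and the identity $|I_m(\lambda)|=|\lambda|+2m+1$ then follow directly from Theorem \ref{deltagammathmbb-insertion}, and the range condition $m\geq N_1$ matches the observation (already noted after the definition of $\mathbb{C}^{<}_j(\ldots|p,t)$) that nonempty slices force $p+t\geq N_1$.

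The main obstacle is the source-side disjointness, since this is the only place where genuinely new content beyond reindexing is required: one must extract the gap bound $\geq 2$ on consecutive $1$-marked parts cleanly from the definition of the G\"ollnitz-Gordon marking, taking care with the parity-sensitive thresholds (strict inequality when the larger part is odd) so that the estimate goes in the right direction and the final arithmetic inequality is indeed violated.
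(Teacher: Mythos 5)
Your proposal is correct and follows essentially the same route as the paper: both define $I_m$ slice-by-slice as $I_{p,t}$ from Theorem \ref{deltagammathmbb-insertion} and use $2p+2t+1=2m+1$ for the weight. The only presentational difference is that you establish well-definedness by proving the slices $\mathbb{C}^{<}_j(N_1,\ldots,N_{k-1};i|p,t)$ with $p+t=m$ are pairwise disjoint (your gap estimate on consecutive $1$-marked parts is valid), whereas the paper selects the unique admissible $(p,t)$ as the smallest $p$ with $2(m-p)+1\geq \lambda^{(1)}_{p+1}+2$; these amount to the same observation.
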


 \pf Let $\lambda$ be a partition in $\mathbb{C}^{<}_j(N_1,\ldots,N_{k-1};i|m)$ and let $p$ be the smallest integer such that $2(m-p)+1\geq \lambda^{(1)}_{p+1}+2$, where $0\leq p\leq N_1$. Set $t=m-p$, then we have  $\lambda\in\mathbb{C}^{<}_j(N_1,\ldots,N_{k-1};i|p,t)$. Define $\mu=I_m(\lambda)=I_{p,t}(\lambda)$. By Theorem \ref{deltagammathmbb-insertion}, we see that $\mu\in\mathbb{C}^{=}_j(N_1+1,\ldots,N_{k-1};i|p,t)\subset\mathbb{C}^{=}_j(N_1+1,\ldots,N_{k-1};i|m)$ and $|\mu|=|\lambda|+2t+2p+1=|\lambda|+2m+1$.

 To prove that $I_{m}$ is a bijection, we construct the inverse map of the insertion
$I_{m}$, the separation $S_{m}$.  Let $\mu$ be a partition in $\mathbb{C}^{=}_j(N_1+1,\ldots,N_{k-1};i|m)$.   Assume that $\mu^{(1)}_{p+1}=2t+1$ is the insertion part of $\mu$, where $m=p+t$. Define $\lambda=S_{m}(\mu)=S_{p,t}(\mu)$. It is clear  that the separation $S_{m}$ is the inverse map of $I_{m}$.  This completes the proof.  \qed

The following theorem give a criterion to determine whether a partition in $\mathbb{C}^{=}_j(N_1+1,\ldots,N_{k-1};i|m)$ also belongs to $\mathbb{C}^{<}_j(N_1+1,\ldots,N_{k-1};i|m')$, which involves the successive applications of the insertion.

 \begin{thm}\label{ssins}
 For  $m\geq N_1$, let $\mu$ be a partition in $\mathbb{C}^{=}_j(N_1+1,\ldots,N_{k-1};i|m)$. Then $\mu$ is a partition in $\mathbb{C}^{<}_j(N_1+1,\ldots,N_{k-1};i|m')$ if and only if $m'>m$.
\end{thm}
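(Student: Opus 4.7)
The plan is to handle both directions by exploiting the fact that any $\mu\in\mathbb{C}^{=}_j(N_1+1,\ldots,N_{k-1};i|m)$ arises as $\mu=I_{p,t}(\lambda)=\Theta_{(p)}(C_{p,t}(\lambda))$ for some source $\lambda$, with $m=p+t$. In particular, the construction in the proof of Theorem~\ref{deltagammathmbb-insertion} tells us that $\mu^{(1)}_{p+1}=2t+1$ is odd (hence of odd type) and $\mu\in\overrightarrow{\mathbb{C}}_j(N_1+1,\ldots,N_{k-1};i|p)$, meaning $\mu^{(1)}_s$ is of even type for $1\le s\le p$. I will also invoke the spacing bound $\mu^{(1)}_{s-1}-\mu^{(1)}_s\ge 2$ for consecutive $1$-marked parts, a direct consequence of Definition~\ref{Gollnitz-Gordon-marking}.

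For the necessity direction, I will assume $\mu\in\mathbb{C}^{<}_j(N_1+1,\ldots,N_{k-1};i|p',t')$ with $p'+t'=m'$ and first argue $p'\le p$: when $p'\ge 1$ the required $\mu\in\overrightarrow{\mathbb{C}}_j(\ldots|p')$ demands that $\mu^{(1)}_s$ be even type for $1\le s\le p'$, which is incompatible with the odd-type $\mu^{(1)}_{p+1}$ unless $p'\le p$, and the $p'=0$ case is automatic. Iterating the spacing bound yields $\mu^{(1)}_{p'+1}\ge 2t+1+2(p-p')$, so the constraint $\mu^{(1)}_{p'+1}+2\le 2t'+1$ gives $t'\ge t+p-p'+1$, hence $m'=p'+t'\ge p+t+1=m+1$, as desired.

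For the sufficiency direction, I will mimic the recipe from the proof of Theorem~\ref{deltagammathmbb-insertion-all}: let $p'$ be the smallest nonnegative integer with $2(m'-p')+1\ge\mu^{(1)}_{p'+1}+2$, and set $t'=m'-p'$. Such $p'$ exists in $[0,N_1+1]$ since $\mu^{(1)}_{N_1+2}=-\infty$. The upper bound $2t'+1<\mu^{(1)}_{p'}$ then falls out of the minimality of $p'$ (trivial for $p'=0$ with the convention $\mu^{(1)}_0=+\infty$). The hypothesis $m'>m$ combined with $\mu^{(1)}_{p+1}=2t+1$ gives $2(m'-p)+1\ge 2t+3=\mu^{(1)}_{p+1}+2$, which by minimality forces $p'\le p$. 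Since $\mu^{(1)}_s$ is even type for $1\le s\le p$, the same holds for $1\le s\le p'$, so $\mu\in\overrightarrow{\mathbb{C}}_j(\ldots|p')$ and therefore $\mu\in\mathbb{C}^{<}_j(\ldots|p',t')\subseteq\mathbb{C}^{<}_j(N_1+1,\ldots,N_{k-1};i|m')$.

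The main delicate point I anticipate is the clean extraction of the structural property that $\mu^{(1)}_{p+1}$ is odd type while $\mu^{(1)}_s$ is even type for $s\le p$, since the definition of an insertion part records this only implicitly through conditions (1) and (2) rather than directly; once this is read off from the explicit description of $I_{p,t}=\Theta_{(p)}\circ C_{p,t}$, both directions reduce to routine bookkeeping with the spacing inequality and the minimality of $p'$.
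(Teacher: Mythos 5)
Your proposal is correct and follows essentially the same route as the paper: both directions rest on the spacing chain $\mu^{(1)}_{p'+1}\geq \mu^{(1)}_{p+1}+2(p-p')$ together with the comparison $p'\leq p$ obtained from the minimality of $p'$ (sufficiency) or from the odd/even-type structure (necessity). Your extra care in verifying the $\overrightarrow{\mathbb{C}}_j(\ldots|p')$ membership via the even-type condition for $s\leq p'$ is a detail the paper leaves implicit, but it does not change the argument.
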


\pf  Assume that $\mu^{(1)}_{p+1}=2t+1$ is the insertion part of $\mu$, where $m=p+t$.

If $m'>m$, then we have $2m'+1-2p\geq 2(m+1)+1-2p=(2t+1)+2=\mu^{(1)}_{p+1}+2$. Assume that $p'$ is the smallest integer such that  $2m'+1-2p'\geq \mu^{(1)}_{p'+1}+2$, we see that  $p'\leq p$. Set $t'=m'-p'$, then we have $\mu\in \mathbb{C}^{<}_j(N_1+1,\ldots,N_{k-1};i|p',t')$, which implies that $\mu\in\mathbb{C}^{<}_j(N_1+1,\ldots,N_{k-1};i|m')$.

On the other hand, assume that $\mu$ is a partition in $\mathbb{C}^{<}_j(N_1+1,\ldots,N_{k-1};i|m')$. Let $p'$ be the smallest integer such that $2(m'-p')+1\geq \mu^{(1)}_{p'+1}+2$, where $0\leq p'\leq N_1+1$. Set $t'=m'-p'$, then we have  $\mu\in\mathbb{C}^{<}_j(N_1+1,\ldots,N_{k-1};i|p',t')$, and so $t'> t$ and $p'\leq p$. Hence
 \[2t'+1\geq \mu^{(1)}_{p'+1}+2\geq\cdots\geq \mu^{(1)}_{p+1}+2(p-p'+1)= 2t+1+2(p-p'+1),\]
which leads to $p'+t'\geq p+t+1$. This implies $m'=p'+t'>p+t=m$. 

Thus, we complete the proof.  \qed

 \subsection{Proof of Theorem \ref{equiv-main-thm-2}}

 We are now in a position to give a proof of Theorem \ref{equiv-main-thm-2}.

 {\noindent \bf Proof of Theorem \ref{equiv-main-thm-2}.} Let $(\lambda,-,\eta)$ be a pair in $\mathbb{F}_1(2,2)$. Then, we have $\lambda\in\mathbb{E}_1(2,2)$. We define $\pi=\Phi_{2,2}(\lambda,-,\eta)$ as follows. There are two cases.
 
 Case 1: $\eta=\emptyset$. Then we set $\pi=\lambda$. It is clear that $\pi\in\mathbb{E}_1(2,2)\subset C_1(2,2)$. Moreover, $|\pi|=|\lambda|+|\eta|$ and $\ell(\pi)=\ell(\lambda)+\ell(\eta)$.
 
 Case 2: $\eta\neq\emptyset$. Note that  $\lambda$ is a partition in  $\mathbb{E}_1(2,2)$, so  the parts of $\lambda$ are even and the marks of parts in $GG(\lambda)$ are not exceed to $1$. Assume that there are $N_1$ $1$-marked parts in $GG(\lambda)$, denoted
 $\lambda=\lambda^{(1)}=(\lambda^{(1)}_1,\lambda^{(1)}_2,\ldots,\lambda^{(1)}_{N_1})\in\mathbb{E}_1({N_1};2)$. Then we have $\eta\in\mathbb{I}_{N_1}$, denoted  $\eta=(2m_1+1,2m_2+1,\ldots,2m_\ell+1)$, where $m_1>m_2>\cdots>m_\ell\geq N_1$.

Starting with $\lambda$, apply the insertion repeatedly to get $\pi$. Denote the intermediate partitions by $\lambda^0,\lambda^1,\ldots,\lambda^\ell$ with $\lambda^0=\lambda$ and $\lambda^\ell=\pi$. Since $m_\ell\geq N_1$,  there exists $p$ such that $p$ is the smallest integer $p$ satisfying $2m_\ell+1-2p\geq \lambda^{(1)}_{p+1}+2$. Note that there do not exist odd parts in $\lambda$, so $\lambda^{(1)}_s$ is even type for $1\leq s\leq N_1$. Set $t=m_\ell-p$, then we have $\lambda\in\mathbb{C}^<_1({N_1};2|p,t)\subset\mathbb{C}^<_1({N_1};2|m_\ell)$.

Set $b=0$ and repeat following process until $b=\ell$.

\begin{itemize}
    \item[(A)] Note that $\lambda^b\in\mathbb{C}^<_1({N_1+b};2|m_{\ell-b})$, we apply the insertion $I_{m_{\ell-b}}$ to $\lambda^b$ to get $\lambda^{b+1}$, that is,
   \[\lambda^{b+1}=I_{m_{\ell-b}}(\lambda^{b}).\]
   In view of Theorem \ref{deltagammathmbb-insertion-all}, we deduce that 
   \[
   \lambda^{b+1}\in\mathbb{C}^=_1({N_1+b+1};2|m_{\ell-b})\]
   and 
   \[|\lambda^{b+1}|=|\lambda^b|+2m_{\ell-b}+1.\]
    
    \item[(B)] Replace $b$ by $b+1$. If $b=\ell$, then we are done. If $b<\ell$, then by Theorem \ref{ssins}, we have
     \[
   \lambda^{b}\in\mathbb{C}^<_1({N_1+b};2|m_{\ell-b}),\]
 since $m_{\ell-b}>m_{\ell-b+1}$. Go back to (A).    
\end{itemize}  

Eventually, the above process yields $\pi=\lambda^\ell\in \mathbb{C}^=_1({N_1+\ell};2|m_{1})$ such that $|\pi|=|\lambda|+|\eta|$. Moreover, we have $\pi\in\mathbb{C}_1(2;2)$ and $\ell(\pi)=\ell(\lambda)+\ell(\eta)$.

 To show that $\Phi_{2,2}$ is a bijection, we give the inverse map $\Psi_{2,2}$ of $\Phi_{2,2}$ by successively applying the separation defined in the proof of Theorem \ref{deltagammathmbb-insertion-all}. Let $\pi$ be a partition in $\mathbb{C}_1(2;2)$. We shall construct a pair $(\lambda,-,\eta)$, that is, $(\lambda,-,\eta)=\Psi_{2,2}(\pi)$, such that $(\lambda,-,\eta)\in \mathbb{F}_1(2;2)$, $|\pi|=|\lambda|+|\eta|$ and $\ell(\pi)=\ell(\lambda)+\ell(\eta)$.

 Assume that there are $\ell\geq 0$ odd parts and $N_1$ even parts  in $\pi$. We eliminate all odd parts of $\pi$. There are two cases.
 
 Case 1: $\ell=0$. Then set $\lambda=\pi$ and $\eta=\emptyset$. Clearly, $(\lambda,-,\eta)\in \mathbb{F}_1(2;2)$, $|\pi|=|\lambda|+|\eta|$ and $\ell(\pi)=\ell(\lambda)+\ell(\eta)$.
 
 Case 2: $\ell\geq 1$. Assume that $2t_0+1>2t_1+1>\cdots>2t_{\ell-1}+1$ are the odd parts of $\pi$. We eliminate the $\ell$ odd parts of $\pi$  by successively applying the separation defined in the proof of Theorem \ref{deltagammathmbb-insertion-all}. Denote the intermediate pairs by $(\pi^0,-,\eta^0)$, $(\pi^1,-,\eta^1)$, \ldots, $(\pi^\ell,-,\eta^\ell)$ with  $(\pi^0,-,\eta^0)=(\pi,-,\emptyset)$.
 
 Set $b=0$ and carry out the following procedure.

 \begin{itemize}
     \item[(A)] Note that $\pi^b$ is a partition in $\mathbb{C}_1(N_1+\ell-b;2)$ and  $2t_b+1$ is the largest odd part of $\pi^b$, we see that there are no parts $2t_b+2$ in $\pi^{b}$,  otherwise the largest mark in $GG(\pi^{b})$ are greater than $1$, which leads to a contradiction.  So, $2t_b+1$ is a insertion part of $\pi^{b}$. Assume that $2t_b+1$ is the $(p_b+1)$-th part of $\pi^b$.  Let $m_b=p_b+t_b$. Then, 
      \[
   \pi^{b}\in\mathbb{C}^=_1({N_1+\ell-b};2|m_{b}).\]
     Apply the separation $S_{m_b}$ to $\pi^b$ to get $\pi^{b+1}$, that is,
      \[\pi^{b+1}=S_{m_{b}}(\pi^{b}).\]
      Employing Theorem \ref{deltagammathmbb-insertion-all}, we find that 
   \[
   \pi^{b+1}\in\mathbb{C}^<_1({N_1+\ell-b-1};2|m_{b})\]
   and 
   \[|\pi^{b+1}|=|\pi^b|-2m_{b}-1.\]
     Then insert $2m_{b}+1$ into $\eta^b$ to obtain $\eta^{b+1}$.
     
     \item[(B)] Replace $b$ by $b+1$. If $b=\ell$, then we are done. Otherwise, go back to (A).
     
 \end{itemize}

Observe that for $0\leq b\leq \ell$, there are $\ell-b$ odd parts in $\pi^b$. Theorem \ref{ssins} reveals that for $0\leq b<\ell-1$,
\[m_b>m_{b+1}\geq N_1+\ell-b-2.\]
Therefore, there are no odd parts in $\pi^\ell$ and $\eta^\ell=(2m_0+1,2m_1+1,\ldots,2m_{\ell-1}+1)$ is a partition in $\mathbb{I}_{N_1}$.
Set $(\lambda,-,\eta)=(\pi^\ell,-,\eta^\ell)$.  It is clear that $\lambda=\pi^\ell\in\mathbb{C}^<_1({N_1};2|m_{\ell-1})\subset\mathbb{C}_1(N_1;2)$, $\eta=\eta^\ell\in \mathbb{I}_{N_1}$, $|\pi|=|\lambda|+|\eta|$ and $\ell(\pi)=\ell(\lambda)+\ell(\eta)$. Recall that there do not exist odd parts in $\lambda$,  so  $\lambda\in\mathbb{E}_1(2;2)$. Hence, $(\lambda,-,\eta)$ is a pair in  $\mathbb{F}_1(2,2)$.

It can be verified that $\Psi_{2,2}$ is the inverse map of $\Phi_{2,2}$. Thus, we complete the proof. \qed

 \section{The reduction $R_p$ and the dilation $H_p$}

Assume that  $k\geq 3$, $k\geq i\geq 1$, $j=0$ or $1$, $N_1\geq\cdots\geq N_{k-1}\geq0$ and $1\leq p\leq N_2$. In this section, we introduce two subsets of $\mathbb{C}_j(N_1,\ldots,N_{k-1};i)$, $\mathbb{\hat{C}}_j(N_1,\ldots,N_{k-1};i|p)$ and  $\mathbb{\check{C}}_j(N_1,\ldots,N_{k-1};i|p)$, show the follow theorem which is proved by the reduction operation and the dilation operation.  

  \begin{thm}\label{reduction-1} There is a bijection, the reduction $R_p$, between    $\mathbb{\hat{C}}_j(N_1,\ldots,N_{k-1};i|p)$ and the set $\mathbb{\check{C}}_j(N_1,\ldots,N_{k-1};i|p)$. Moreover, for  $\lambda \in \mathbb{\hat{C}}_j(N_1,\ldots,N_{k-1};i|p)$ and $\mu=R_p(\lambda)\in\mathbb{\check{C}}_j(N_1,\ldots,N_{k-1};i|p)$, we have
$|\mu|=|\lambda|-2p+1$.
 \end{thm}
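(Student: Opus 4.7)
The plan is to mimic the architecture developed in Section 4, where the insertion $I_{p,t}$ was built by composing the combination $C_{p,t}$ with the iterated bijection $\Theta_{(p)}$. For the reduction $R_p$, the natural analogue works on the $2$-marked sub-partition $\lambda^{(2)}=(\lambda^{(2)}_1,\ldots,\lambda^{(2)}_{N_2})$ rather than on the $1$-marked sub-partition, since the parameter ranges over $1\le p\le N_2$. Because the size change is $|\mu|=|\lambda|-2p+1$ (negative displacement on the $\lambda^{(2)}$-level plus a single extracted odd part), the reduction is an ``extraction'' rather than an ``insertion''. I would first locate a distinguished $2$-marked part in $\lambda$ adjacent to an appropriate odd-type/even-type configuration among $\lambda^{(2)}_1,\ldots,\lambda^{(2)}_p$, detach a small odd part, and then systematically decrement the affected $2$-marked entries so that all marks are preserved, reducing the total by exactly $2p-1$.

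Following this, I would split $\mathbb{\hat{C}}_j(N_1,\ldots,N_{k-1};i|p)$ into disjoint subsets according to whether the relevant $2$-marked parts are odd or even and whether adjacent even-type companions appear in $GG(\lambda)$, in complete parallel with the split $\mathbb{C}_j^{(1)}$ versus $\mathbb{C}_j^{(2)}$ that appeared in Lemmas \ref{varphi1} and \ref{varphi2}. On each piece I would exhibit $R_p$ explicitly as a local modification of the G\"ollnitz-Gordon marking diagram: demoting the distinguished $2$-marked entries by $1$ or $2$, promoting their neighbors by $1$ where necessary to preserve the marking, and dropping a small odd part whose value compensates the remaining bookkeeping. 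I would check directly that the resulting array is again a valid G\"ollnitz-Gordon marking with the same row counts $(N_1,\ldots,N_{k-1})$, and that the image lies in $\mathbb{\check{C}}_j(N_1,\ldots,N_{k-1};i|p)$.

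To verify that $\mu=R_p(\lambda)$ lands in $\mathbb{C}_j(N_1,\ldots,N_{k-1};i)$ I would address two points: first, that no collisions of marks are created and that the frequency conditions (1)--(3) of Theorem \ref{Gordon-Gollnitz-odd} remain intact (this is routine from the local nature of the modification); second, that when $j=0$ the parity condition \eqref{cong-cond-o} on the induced $(k-1)$-bands is preserved. For the second point I would invoke Theorem \ref{parity k-1 sequence} to reduce the verification to the $N_{k-1}$ bands $\{\lambda^{(k-1)}_s\}_{k-1}$, and then carry out a case analysis on the location of $\mu^{(k-1)}_s$ relative to the interval affected by $R_p$, comparing $[\mu^{(k-1)}_{s,l}/2]$ with $[\lambda^{(k-1)}_{s,l}/2]$ and tracking the change in $O_\mu(\mu^{(k-1)}_{s,1})$ versus $O_\lambda(\lambda^{(k-1)}_{s,1})$. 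This analysis is entirely parallel to the five-case computation inside Lemma \ref{varphi1} and the four-case computation inside Lemma \ref{deltagammathmbb}.

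Finally, I would define the dilation $H_p$ by reversing each elementary step of $R_p$ on the corresponding pieces of $\mathbb{\check{C}}_j(N_1,\ldots,N_{k-1};i|p)$ and verify $R_p\circ H_p=\mathrm{id}$ and $H_p\circ R_p=\mathrm{id}$ piece by piece. The main obstacle will be the $j=0$ parity bookkeeping: the extracted odd part changes $O_\lambda$ globally by $1$ on a large range of values, while the local decrements on $\lambda^{(2)}$ change both $O_\lambda$ and the floors $[\lambda/2]$ in a more delicate fashion, and one must show these contributions cancel modulo $2$ for every overlapping $(k-1)$-band. Once this cancellation is established in each of the subcases of the splitting, the theorem follows, and $R_p$ together with $H_p$ provides the desired bijection with $|\mu|=|\lambda|-2p+1$.
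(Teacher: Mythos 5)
Your outline captures the right general strategy (local modifications of the $2$-marked parts governed by a case split, verification of the $j=0$ parity condition through the $(k-1)$-band criterion of Theorem \ref{parity k-1 sequence}, and an explicitly constructed inverse), but it stops at the level of a plan and misses the two ideas that actually make the construction work. First, the reduction is not a ``detach an odd part, then decrement'' operation: $R_p$ preserves the number of parts and all row counts $N_1,\dots,N_{k-1}$. What it does is run a cascade starting at the largest $2$-marked part $\lambda^{(2)}_1$ and moving down to $\lambda^{(2)}_p$, at each stage converting an even part $2t_b+2$ into an odd part $2t_b+1$ and converting the odd part created at the previous stage back into an even part two smaller, so that after $p$ stages a single odd part (the reduction part $2t_p+1$, which survives in $\mu$) has migrated to position $p$ and the weight has dropped by $1+2(p-1)=2p-1$. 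Your phrasing ``dropping a small odd part whose value compensates the remaining bookkeeping'' suggests removing a part, which would change $\ell(\mu)$ and the row counts and put $\mu$ outside $\mathbb{\check{C}}_j(N_1,\ldots,N_{k-1};i|p)$.

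Second, and more seriously, the intermediate objects $\lambda^1,\dots,\lambda^{p-1}$ of this cascade are not ordinary partitions in $\mathbb{C}_j(N_1,\dots,N_{k-1};i)$: the odd part being transported must sometimes carry a mark $\geq 2$ even though an equal or adjacent $1$-marked part is present, which the ordinary G\"ollnitz-Gordon marking forbids. The paper resolves this by introducing special partitions with an overlined odd part, the modified marking $\overline{GG}$, and the companion parity criterion Theorem \ref{parity k-1 sequence-special}; without some such device your iteration is not well defined and the inverse $H_p$ cannot be read off the image. Relatedly, the case split you propose (parallel to $\mathbb{C}^{(1)}_j$ versus $\mathbb{C}^{(2)}_j$ in Lemmas \ref{varphi1} and \ref{varphi2}) is far too coarse: one needs the types $\hat{A}_1,\hat{A}_2,\hat{B},\hat{C},\hat{O}$ and the cluster decomposition $\hat{\alpha}$ on the source side, matched against the types $\check{A}_1,\check{A}_2,\check{A}_3,\check{B},\check{C}$ and clusters $\check{\beta}$ on the target side, precisely so that the dilation knows which elementary move to undo. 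These constructions are the substance of the theorem, and the proposal does not supply them.
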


\subsection{The set $\mathbb{\hat{C}}_j(N_1,\ldots,N_{k-1};i|p)$}

Let $\lambda$ be a partition in $\mathbb{C}_j(N_1,\ldots,N_{k-1};i)$ and let $\lambda^{(r)}=(\lambda^{(r)}_1,\lambda^{(r)}_2,\ldots,\lambda^{(r)}_{N_r})$ denote $r$-marked parts in $GG(\lambda)$, where $\lambda^{(r)}_1>\lambda^{(r)}_2>\cdots>\lambda^{(r)}_{N_r}$ and $1\leq r\leq k-1$. We will focus on the $2$-marked parts in $GG(\lambda)$.

{\bf \noindent The types of $\lambda^{(2)}_1,\lambda^{(2)}_2,\ldots,\lambda^{(2)}_p$.} Assume that $\lambda^{(2)}_p$ is even and  there do not exist odd parts greater than $\lambda^{(2)}_p$ in $\lambda$, then we define the types of $\lambda^{(2)}_1,\lambda^{(2)}_2,\ldots,\lambda^{(2)}_p$  from largest to smallest  as follows. We first define the type of $\lambda^{(2)}_1$. There are two cases.
\begin{itemize}
\item[(1)] If there exist parts $\lambda^{(2)}_1+2$ with marks greater than $2$ in $GG(\lambda)$, then by the definition of G\"ollnitz-Gordon marking, there is a $1$-marked part $\lambda^{(1)}_{s_1}=\lambda^{(2)}_1$ or $\lambda^{(2)}_1+2$ in $GG(\lambda)$. We set $\hat{R}_1(\lambda)=s_1$. There are two subcases.

\begin{itemize}
   \item[(1.1)] If $\lambda^{(1)}_{s_1}=\lambda^{(2)}_1$, then we say that  $\lambda^{(2)}_1$ is of type $\hat{A}_1$.

        \item[(1.2)] If $\lambda^{(1)}_{s_1}=\lambda^{(2)}_1+2$, then we say that  $\lambda^{(2)}_1$ is of type $\hat{B}$.
\end{itemize}

\item[(2)] If there do not exist parts $\lambda^{(2)}_1+2$ with marks greater than $2$ in $GG(\lambda)$, then by the definition of G\"ollnitz-Gordon marking, there is a $1$-marked part $\lambda^{(1)}_{s_1}=\lambda^{(2)}_1-2$ or $\lambda^{(2)}_1-1$ or $\lambda^{(2)}_1$ in $GG(\lambda)$. We set $\hat{R}_1(\lambda)=s_1$. There are three subcases.

\begin{itemize}
\item[(2.1)] If $\lambda^{(1)}_{s_1}=\lambda^{(2)}_1-2$, then we say that  $\lambda^{(2)}_1$ is of type $\hat{C}$.

    \item[(2.2)] If $\lambda^{(1)}_{s_1}=\lambda^{(2)}_1-1$, then we say that  $\lambda^{(2)}_1$ is of type $\hat{O}$.

    \item[(2.3)] If $\lambda^{(1)}_{s_1}=\lambda^{(2)}_1$, then we say that  $\lambda^{(2)}_1$ is of type $\hat{A}_2$.
\end{itemize}

\end{itemize}

 For $1<b\leq p$, assume that  $\hat{R}_{b-1}(\lambda)$ and the type of $\lambda^{(2)}_{b-1}$  have been defined, then we define the type of $\lambda^{(2)}_b$ as follows. There are two cases.

 \begin{itemize}
\item[(1)] If there exist parts $\lambda^{(2)}_b+2$ with marks greater than $2$ in $GG(\lambda)$, then by the definition of G\"ollnitz-Gordon marking, there is a $1$-marked part $\lambda^{(1)}_{s_b}=\lambda^{(2)}_b$ or $\lambda^{(2)}_b+2$ in $GG(\lambda)$. There are four subcases.

     \begin{itemize}

       \item[(1.1)] If $\lambda^{(1)}_{s_b}=\lambda^{(2)}_b$, then we say that  $\lambda^{(2)}_b$ is of type $\hat{A}_1$ and set $\hat{R}_b(\lambda)=s_b$.

    \item[(1.2)] If $\lambda^{(1)}_{s_b}=\lambda^{(2)}_b+2$ and $\hat{R}_{b-1}(\lambda)\neq s_b$, then we say that $\lambda^{(2)}_b$ is of type $\hat{B}$ and set $\hat{R}_b(\lambda)=s_b$.

          \item[(1.3)] If $\lambda^{(1)}_{s_b}=\lambda^{(2)}_b+2$, $\hat{R}_{b-1}(\lambda)= s_b$ and $\lambda^{(1)}_{s_b+1}=\lambda^{(2)}_b-2$, then we say that $\lambda^{(2)}_b$ is of type $\hat{C}$ and set $\hat{R}_{b}(\lambda)=s_b+1$.

               \item[(1.4)] If $\lambda^{(1)}_{s_b}=\lambda^{(2)}_b+2$, $\hat{R}_{b-1}(\lambda)= s_b$ and $\lambda^{(1)}_{s_b+1}=\lambda^{(2)}_b-1$, then we say that $\lambda^{(2)}_b$ is of type $\hat{O}$ and set $\hat{R}_{b}(\lambda)=s_b+1$.

 \end{itemize}

\item[(2)] If there do not exist parts $\lambda^{(2)}_b+2$ with marks greater than $2$ in $GG(\lambda)$, then by the definition of G\"ollnitz-Gordon marking, there is a $1$-marked part $\lambda^{(1)}_{s_b}=\lambda^{(2)}_b-2$ or $\lambda^{(2)}_b-1$ or $\lambda^{(2)}_b$ in $GG(\lambda)$. We set $\hat{R}_l(\lambda)=s_b$. There are three subcases.

\begin{itemize}
    \item[(2.1)] If $\lambda^{(1)}_{s_b}=\lambda^{(2)}_b-2$, then we say that  $\lambda^{(2)}_b$ is of type $\hat{C}$.

    \item[(2.2)] If $\lambda^{(1)}_{s_b}=\lambda^{(2)}_b-1$, then we say that $\lambda^{(2)}_b$ is of type $\hat{O}$.

    \item[(2.3)] If $\lambda^{(1)}_{s_b}=\lambda^{(2)}_b$, then we say that  $\lambda^{(2)}_b$ is of type $\hat{A}_2$.

\end{itemize}
\end{itemize}
For each $1\leq b\leq p$, we say that $\lambda^{(2)}_b$ is of type $\hat{A}$ if $\lambda^{(2)}_b$ it is of type $\hat{A}_1$ or type $\hat{A}_2$.

For example, let $\lambda$ be a partition in $\mathbb{C}_1(10,9,6;3)$, whose G\"ollnitz-Gordon marking is
{\footnotesize \begin{equation}\label{example-reduction-type-1}
GG(\lambda)=\left[
\begin{array}{cccccccccccccccccccc}
 &&4&&&&12&&16&&&22&&&&30&&&36\\
 &2&&6&&10&&14&&&20&&24&&28&&&34&&38\\
1&&4&&8&&12&&16&&20&&24&&28&&&34&&38
\end{array}
\right].
\end{equation}}
 For $p=9$, it is clear that $\lambda^{(2)}_1=38$, $\lambda^{(2)}_2=34$, $\lambda^{(2)}_3=28$, $\lambda^{(2)}_4=24$ and $\lambda^{(2)}_5=20$  are of types  $\hat{A}_2$,  $\hat{A}_1$,  $\hat{A}_1$,  $\hat{A}_2$ and  $\hat{A}_1$, respectively, and so these $2$-marked parts are of type $\hat{A}$. Moreover, we have $\hat{R}_l(\lambda)=1$, $\hat{R}_2(\lambda)=2$,
$\hat{R}_3(\lambda)=3$,
$\hat{R}_4(\lambda)=4$ and 
$\hat{R}_5(\lambda)=5$. 

 There are $1$-marked and $3$-marked parts $16$ in $GG(\lambda)$. Moreover, note that $\lambda^{(1)}_6=16$ and $\hat{R}_5(\lambda)=5\neq6$, we see that $\hat{R}_6(\lambda)=6$ and $\lambda^{(2)}_6=14$ is of type $\hat{B}$. Similarly, we get that $\hat{R}_7(\lambda)=7$ and $\lambda^{(2)}_7=10$ is of type $\hat{B}$.

Note that there do not exist parts $8$ with marks greater than $2$ and $\lambda^{(1)}_9=4$, then we see that  $\hat{R}_8(\lambda)=9$ and $\lambda^{(2)}_8=6$ is of type $\hat{C}$. There are $1$-marked and $3$-marked parts $4$ in $GG(\lambda)$. Moreover, note that $\lambda^{(1)}_{9}=4$, $\lambda^{(1)}_{10}=1$ and $\hat{R}_8(\lambda)=9$, we see that $\hat{R}_9(\lambda)=10$ and $\lambda^{(2)}_9=2$ is of type $\hat{O}$.

Now, we define $\mathbb{\hat{C}}_j(N_1,\ldots,N_{k-1};i|p)$ to be the set of partitions $\lambda$ in  $\mathbb{C}_j(N_1,\ldots,N_{k-1};i)$ such that there do not exist odd parts greater than or equal to $\lambda^{(2)}_p$, and $\lambda^{(2)}_p$ is of type $\hat{A}$ or type $\hat{B}$ or type $\hat{C}$. For example, the partition $\lambda$  in $\eqref{example-reduction-type-1}$ belongs to $\mathbb{\hat{C}}_1(10,9,6;3|8)$.

Let $\lambda$ be a partition in $\mathbb{\hat{C}}_j(N_1,\ldots,N_{k-1};i|p)$, we define the clusters based on the types of $\lambda^{(2)}_1,\lambda^{(2)}_2,\ldots,\lambda^{(2)}_p$. For convenience, we assume that $\lambda^{(2)}_{0}=+\infty$. We start from the part $\lambda^{(2)}_{p}$, assume that $p_1$ is the smallest integer such that $p_1\leq p$, $\lambda^{(2)}_{p_1}-\lambda^{(2)}_{p}=4(p-p_1)$ and satisfying one of the following four conditions:

\begin{itemize}

\item[(1)] $\lambda^{(2)}_{p},\ldots,\lambda^{(2)}_{p_1}$ are of type $\hat{A}_1$ and $\lambda^{(2)}_{p_1-1}>\lambda^{(2)}_{p_1}+4$, we say that the set $\{p_1,\ldots,p\}$ is of cluster $\hat{A}_1$.

\item[(2)] $\lambda^{(2)}_{p},\ldots,\lambda^{(2)}_{p_1}$ are of type $\hat{A}$ and $\lambda^{(2)}_{p_1}$ is of type $\hat{A}_2$, we say that the set $\{p_1,\ldots,p\}$ is of cluster $\hat{A}_2$.

\item[(3)] $\lambda^{(2)}_{p},\ldots,\lambda^{(2)}_{p_1}$ are of type $\hat{B}$, we say that the set $\{p_1,\ldots,p\}$ is of cluster $\hat{B}$.

    \item[(4)] $\lambda^{(2)}_{p},\ldots,\lambda^{(2)}_{p_1}$ are of type $\hat{C}$, we say that the set $\{p_1,\ldots,p\}$ is of cluster $\hat{C}$.

\end{itemize}
Define $\hat{\alpha}_p(\lambda)=\cdots=\hat{\alpha}_{p_1}(\lambda)=\{p_1,\ldots,p\}$. If $p_1>1$, then set $b=1$ and execute the following procedure:
\begin{itemize}
\item[(E)]  Assume that $p_{b+1}$ is the smallest integer such that $p_{b+1}\leq p_b-1$, $\lambda^{(2)}_{p_{b+1}}-\lambda^{(2)}_{p_b-1}=4(p_b-1-p_{b+1})$ and satisfying one of the following five conditions:
    \begin{itemize}
\item[(1)] $\lambda^{(2)}_{p_b-1},\ldots,\lambda^{(2)}_{p_{b+1}}$ are of type $\hat{A}_1$ and $\lambda^{(2)}_{p_{b+1}-1}>\lambda^{(2)}_{p_{b+1}}+4$, we say that the set $\{p_{b+1},\ldots,p_b-1\}$ is of cluster $\hat{A}_1$.

\item[(2)] $\lambda^{(2)}_{p_b-1},\ldots,\lambda^{(2)}_{p_{b+1}}$ are of type $\hat{A}$, $\lambda^{(2)}_{p_{b+1}}$ is of type $\hat{A}_2$, and there is no $1$-marked part $\lambda^{(2)}_{p_b-1}-4$ in $GG(\lambda)$, then we say that the set $\{p_{b+1},\ldots,p_b-1\}$ is of cluster $\hat{A}_2$.

 \item[(3)] $\lambda^{(2)}_{p_b-1},\ldots,\lambda^{(2)}_{p_{b+1}}$ are of type $\hat{A}$, $\lambda^{(2)}_{p_{b+1}}$ is of type $\hat{A}_2$,  and there is a $1$-marked part $\lambda^{(2)}_{p_b-1}-4$  in $GG(\lambda)$, then we say that the set $\{p_{b+1},\ldots,p_b-1\}$ is of cluster $\hat{A}_3$.

\item[(4)] $\lambda^{(2)}_{p_b-1},\ldots,\lambda^{(2)}_{p_{b+1}}$ are of type $\hat{B}$, we say that the set $\{p_{b+1},\ldots,p_b-1\}$  is of cluster $\hat{B}$.

       \item[(5)] $\lambda^{(2)}_{p_b-1},\ldots,\lambda^{(2)}_{p_{b+1}}$ are of type $\hat{C}$, we say that the set $\{p_{b+1},\ldots,p_b-1\}$ is of cluster $\hat{C}$.

\end{itemize}
Define $\hat{\alpha}_{p_b-1}(\lambda)=\cdots=\hat{\alpha}_{p_{b+1}}(\lambda)=\{p_{b+1},\ldots,p_b-1\}$.

\item[(F)] Replace $b$ by $b+1$. If $p_b=1$, then we are done. If $p_b>1$, then go back to (E).    
    
\end{itemize}

For example, let $\lambda$ be the partition in \eqref{example-reduction-type-1}. For $p=8$, we find that the intervals $\{8\}$, $\{6,7\}$, $\{4,5\}$, $\{3\}$ and $\{1,2\}$ are of clusters $\hat{C}$, $\hat{B}$, $\hat{A}_3$, $\hat{A}_1$ and $\hat{A}_2$, respectively. For $p=5$, we derive that the intervals $\{4,5\}$, $\{3\}$ and $\{1,2\}$ are of clusters $\hat{A}_2$, $\hat{A}_1$ and $\hat{A}_2$, respectively.

\subsection{The set $\mathbb{\check{C}}_j(N_1,\ldots,N_{k-1};i|p)$}

Let $\mu$ be a partition in $\mathbb{C}_j(N_1,\ldots,N_{k-1};i)$ such that there exist odd parts in $\mu$. Let $\mu^{(r)}=(\mu^{(r)}_1,\mu^{(r)}_2,\ldots,\mu^{(r)}_{N_r})$ denote $r$-marked parts in $GG(\mu)$, where $\mu^{(r)}_1>\mu^{(r)}_2>\cdots>\mu^{(r)}_{N_r}$ and $1\leq r\leq k-1$. We consider the largest odd part $2t+1$ in $\mu$. We say that $2t+1$ is a reduction part of $\mu$ if $2t+1$  is not a insertion part of $\mu$, namely, $\mu$ satisfies either of the following two conditions:

\begin{itemize}
\item[(1)] The mark of $2t+1$ is greater than $1$ in $GG(\mu)$.

 \item[(2)] $\mu^{(1)}_s=2t+1$, $2t+2$ occurs in $\mu$, and  $2t+4b+2$ occurs in $\mu$ if $\mu^{(1)}_{s-b}=2t+4b$, where $1\leq b<s\leq N_1$.

\end{itemize}

By definition, we see that if $2t+1$ is a reduction part in $\mu$, then there exists a $2$-marked part $\mu^{(2)}_{p}=2t$ or $2t+1$ or $2t+2$ in $GG(\mu)$. We say that the reduction index of $2t+1$ is $p$.

We define $\mathbb{\check{C}}_j(N_1,\ldots,N_{k-1};i|p)$ to be the set of partitions $\mu$ in  $\mathbb{C}_j(N_1,\ldots,N_{k-1};i)$ such that there exist odd parts in $\mu$  and the largest odd part is a reduction part with the reduction index $p$.

For example, let $\mu$ be a partition in $\mathbb{C}_1(10,9,6;3)$, whose G\"ollnitz-Gordon marking is
{\footnotesize \begin{equation}\label{example-reduction-type-2}
GG(\mu)=\left[
\begin{array}{cccccccccccccccccccc}
 &&4&&&10&&14&&&&22&&&28&&&&36\\
 &2&&5&&10&&14&&18&&22&&&28&&&34&&38\\
1&&4&&8&&12&&16&&20&&24&&28&&32&&36&
\end{array}
\right].
\end{equation}}
We see that $\lambda^{(2)}_8=5$ is the largest odd part in $\mu$. So we have $\mu\in \mathbb{\check{C}}_1(10,9,6;3|8)$.

For another example, let $\mu$ be a partition in $\mathbb{C}_1(10,9,6;3)$,  whose G\"ollnitz-Gordon marking is
{\footnotesize \begin{equation}\label{example-reduction-type-3}
GG(\mu)=\left[
\begin{array}{cccccccccccccccccccc}
 &&4&&&&12&&16&&&22&&&28&&&&36\\
 &2&&6&&10&&14&&&20&&24&&28&&&34&&38\\
1&&4&&8&&12&&16&19&&22&&&28&&32&&36&
\end{array}
\right].
\end{equation}}
We see that $\mu^{(1)}_5=19$ is the largest odd part in $\mu$. Observe that $\mu^{(1)}_4=22$, $\mu^{(1)}_3=28>26$, and $20$ and $24$ occur in $\mu$, so $19$ is a reduction of $\mu$. Moreover, note that $\mu^{(2)}_5=20$, we have $\mu\in \mathbb{\check{C}}_1(10,9,6;3|5)$.

It is clear that for $1< p\leq N_2$,
\[\mathbb{\check{C}}_j(N_1,\ldots,N_{k-1};i|p)\subseteq \mathbb{\hat{C}}_j(N_1,\ldots,N_{k-1};i|p-1).\]

 Let $\mu$ be a partition in $\mathbb{\check{C}}_j(N_1,\ldots,N_{k-1};i|p)$. We will define the types of $\mu^{(2)}_1,\mu^{(2)}_2,\ldots,$ $\mu^{(2)}_p$ and the clusters related to  $\mu^{(2)}_1,\mu^{(2)}_2,\ldots,\mu^{(2)}_p$. We first define the type of the part $\mu^{(2)}_p$. There are four cases.
\begin{itemize}
\item[(1)] $\mu^{(2)}_p$ is even and there is a part $\mu^{(2)}_p+1$ with mark greater than $2$ in $GG(\mu)$. Then, $\mu^{(2)}_p+1$ is the reduction part of $\mu$. We  say that $\mu^{(2)}_p$ is of type $\check{A}_1$.

    \item[(2)] $\mu^{(2)}_p$ is even and there is no  $\mu^{(2)}_p+1$ in $\mu$. Then, there is a $1$-marked part $\mu^{(2)}_p-1$  in  $GG(\mu)$, which is the reduction part of $\mu$. We say that $\mu^{(2)}_p$ is of type $\check{A}_2$.

       \item[(3)] $\mu^{(2)}_p$ is even and there is a $1$-marked part $\mu^{(2)}_p+1$ in $GG(\mu)$. Then, $\mu^{(2)}_p+1$ is the reduction part of $\mu$. We say that $\mu^{(2)}_p$ is of type $\check{B}$.

     \item[(4)] $\mu^{(2)}_p$ is odd. Then, $\mu^{(2)}_p$ is the reduction part of $\mu$. We say that $\mu^{(2)}_p$ is of type $\check{C}$.

\end{itemize}

Assume that $p_1$ is the smallest integer such that $p_1\leq p$, $\mu^{(2)}_{s}=\mu^{(2)}_p+4(p-s)$ (resp. $\mu^{(2)}_p+4(p-s)-1$) if  $\mu^{(2)}_{p}$ is an even part (resp. an odd part) for $p_1\leq s\leq p$, and  satisfying one of the following conditions:

\begin{itemize}
\item[(1)] $\mu^{(2)}_p$ is of type $\check{A}_1$ and there is a $3$-marked part $\mu^{(2)}_s$ or $\mu^{(2)}_s+1$ in $GG(\mu)$ for $p_1\leq s\leq p$.  We say that the set $\{p_1,\ldots,p\}$ is of cluster $\check{A}_1$ and $\mu^{(2)}_{p-1},\ldots,\mu^{(2)}_{p_1}$ are of type $\check{A}_1$.

\item[(2)] $\mu^{(2)}_p$ is of type $\check{A}_2$ and there is a $1$-marked part $\mu^{(2)}_{p_1}-2$ or $\mu^{(2)}_{p_1}-1$ in $GG(\mu)$.  We say that the set $\{p_1,\ldots,p\}$ is of cluster $\check{A}_2$ and $\mu^{(2)}_{p-1},\ldots,\mu^{(2)}_{p_1}$ are of type $\check{A}_2$.

\item[(3)] $\mu^{(2)}_p$ is of type $\check{B}$ and there exist parts $\mu^{(2)}_l+2$ in $\mu$ for $p_1\leq l\leq p$.  We say that the set $\{p_1,\ldots,p\}$ is of cluster $\check{B}$ and $\mu^{(2)}_{p-1},\ldots,\mu^{(2)}_{p_1}$ are of type $\check{B}$.

\item[(4)] $\mu^{(2)}_p$ is of type $\check{C}$.  We say that the set $\{p_1,\ldots,p\}$ is of cluster $\check{C}$ and $\mu^{(2)}_{p-1},\ldots,\mu^{(2)}_{p_1}$ are of type $\check{C}$.

\end{itemize}
Define $\check{\beta}_p(\mu)=\cdots=\check{\beta}_{p_1}(\mu)=\{p_1,\ldots,p\}$. If $p_1>1$, then set $b=1$ and carry out the following procedure:

\begin{itemize}
    
    \item[(E)]  Assume that $p_{b+1}$ is the smallest integer such that $p_{b+1}\leq p_b-1$, $\mu^{(2)}_{p_{b+1}}-\mu^{(2)}_{p_b-1}=4(p_b-1-p_{b+1})$, and satisfying one of the following conditions:
    \begin{itemize}

\item[(1)] There are $1$-marked and $3$-marked parts $\mu^{(2)}_s$ in $GG(\mu)$ for $p_{b+1}\leq s\leq p_b-1$. We say that the set $\{p_{b+1},\ldots,p_b-1\}$ is of cluster $\check{A}_1$ and $\mu^{(2)}_{p_b-1},\ldots,\mu^{(2)}_{p_{b+1}}$ are of type $\check{A}_1$.

\item[(2)] There is a $1$-marked part $\mu^{(2)}_s-2$ in $GG(\mu)$ for $p_{b+1}\leq s\leq p_b-1$, and  $\mu^{(2)}_{p_{b+1}}+2$ does not occur in $\mu$. We say that the set $\{p_{b+1},\ldots,p_b-1\}$ is of cluster $\check{A}_2$ and $\mu^{(2)}_{p_b-1},\ldots,\mu^{(2)}_{p_{b+1}}$ are of type $\check{A}_2$.

\item[(3)] There is a $1$-marked part $\mu^{(2)}_s-2$ in $GG(\mu)$ for $p_{b+1}\leq s\leq p_b-1$,   $\mu^{(2)}_{p_{b+1}}+2$ occurs in $\mu$, and $\mu^{(2)}_{p_b-1}$ appears exactly once in $\mu$. We say that the set $\{p_{b+1},\ldots,p_b-1\}$ is of cluster $\check{A}_3$ and $\mu^{(2)}_{p_b-1},\ldots,\mu^{(2)}_{p_{b+1}}$ are of type $\check{A}_3$.

\item[(4)] There is a $1$-marked part $\mu^{(2)}_s-2$ in $GG(\mu)$ and $\mu^{(2)}_{s}$ appears at least twice in $\mu$ for $p_{b+1}\leq s\leq p_b-1$, and   $\mu^{(2)}_{p_{b+1}}+2$ occurs in $\mu$. We say that the set $\{p_{b+1},\ldots,p_b-1\}$ is of cluster $\check{B}$ and $\mu^{(2)}_{p_b-1},\ldots,\mu^{(2)}_{p_{b+1}}$ are of type $\check{B}$.

\item[(5)] There is a $1$-marked part $\mu^{(2)}_s$ in $GG(\mu)$ for $p_{b+1}\leq s\leq p_b-1$, and $\mu^{(2)}_{p_b-1}$ appears exactly twice in $\mu$. We say that the set $\{p_{b+1},\ldots,p_b-1\}$ is of cluster $\check{C}$ and $\mu^{(2)}_{p_b-1},\ldots,\mu^{(2)}_{p_{b+1}}$ are of type $\check{C}$.

\end{itemize}
Define $\check{\beta}_{p_b-1}(\mu)=\cdots=\check{\beta}_{p_{b+1}}(\mu)=\{p_{b+1},\ldots,p_b-1\}$.

\item[(F)] Replace $b$ by $b+1$. If $p_b=1$, then we are done. If $p_b>1$, then go back to (E).
    
\end{itemize}

For example, let $\mu$ be the partition in \eqref{example-reduction-type-2}. We find that the intervals $\{8\}$, $\{6,7\}$, $\{4,5\}$, $\{3\}$ and $\{1,2\}$ are of clusters $\check{C}$, $\check{B}$, $\check{A}_3$, $\check{A}_1$ and $\check{A}_2$, respectively.

For another example, let $\mu$ be the partition in \eqref{example-reduction-type-3}, we derive that the intervals $\{4,5\}$, $\{3\}$ and $\{1,2\}$ are of clusters $\check{A}_2$, $\check{A}_1$ and $\check{A}_2$, respectively.

\subsection{Special partitions}

In order to give a description of the reduction $R_p$ between  $\mathbb{\hat{C}}_j(N_1,\ldots,N_{k-1};i|p)$ and $\mathbb{\check{C}}_j(N_1,\ldots,N_{k-1};i|p)$, we need to introduce the special partitions. A special partition $\lambda$ is an ordinary partition in which 
the largest odd part in $\lambda$ may be overlined. We generalize the definition of the G\"ollnitz-Gordon marking of an ordinary partition to a special partition as follows.

\begin{defi}\label{defi-mark-special}
The G\"ollnitz-Gordon  marking  of a special partition $\lambda$, denoted by $\overline{GG}(\lambda)$, is an assignment of positive integers {\rm(}marks{\rm)} to the parts of  $\lambda=(\lambda_1,\lambda_2,\ldots,\lambda_\ell)$ from  smallest to  largest such that the marks are as small as possible subject to the conditions that for $1\leq s\leq \ell$\rm{,}

\begin{itemize}
\item[{\rm (1)}] the integer assigned to $\lambda_s$ is different from the integers assigned to the parts $\lambda_g$ such that $\lambda_s-\lambda_g\leq 2$ with strict inequality if $\lambda_s$ is an odd part for $g>s$\rm{;}

 \item[{\rm (2)}]  $\lambda_s$ can not be assigned with $1$ if $\lambda_s$ is a special odd part.

 \end{itemize}
\end{defi}

\begin{defi}
We define $\mathbb{\overleftarrow{C}}_j(k;i)$ to be the set of special partitions $\lambda$ such that 
\begin{itemize}

 \item[\rm{(1)}] $f_1(\lambda)+f_2(\lambda)\leq i-1${\rm{;}}
 
  \item[\rm{(2)}] $f_{2t+1}(\lambda)+f_{\overline{2t+1}}(\lambda)\leq 1${\rm{;}}
  
  \item[\rm{(3)}] $f_{2t}(\lambda)+f_{2t+1}(\lambda)+f_{\overline{2t+1}}(\lambda)+f_{2t+2}(\lambda)\leq k-1${\rm{;}}
  
  \item[{\rm (4)}] if $f_{2t}(\lambda)+f_{2t+1}(\lambda)+f_{2t+2}(\lambda)=k-1$, then $tf_{2t}(\lambda)+tf_{2t+1}(\lambda)+(t+1)f_{2t+2}(\lambda)\equiv {O}_\lambda(2t+2)+i-1\pmod{2-j},$ where ${O}_\lambda(N)$ denotes the number of odd parts not exceeding $N$ in $\lambda${\rm{;}}

\item[\rm{(5)}] if there is a  $\overline{2s+1}$ in  $\lambda$, then  there are $1$-marked $2s-2$ and $2s+2$ in $\overline{GG}(\lambda)${\rm{.}}

\end{itemize}

\end{defi}

Let $N_r(\lambda)$ (or $N_r$ for short) denote the number of $r$-marked parts in $\overline{GG}(\lambda)$. If $\lambda$ is a partition in $\mathbb{\overleftarrow{C}}_j(k;i)$, then  it is clear that  $N_1\geq N_2\geq\cdots\geq N_{k-1}\geq 0$. Let $\overleftarrow{C}_j(N_1,\ldots,N_{k-1};i)$ denote the set of special partitions $\lambda$ counted by $\overleftarrow{C}_j(k,i)$ such that there are $N_r$ $r$-marked parts in $\overline{GG}(\lambda)$ for $1\leq r\leq k-1$.

We also generalize the definition of $(k-1)$-bands of an ordinary partition in $\mathbb{C}_1(k,i)$ to a special partition in $\mathbb{\overleftarrow{C}}_1(k;i)$.  For the consecutive $k-1$ parts $\lambda_s, \lambda_{s+1}, \ldots, \lambda_{s+k-2}$ of a special partition $\lambda\in\mathbb{\overleftarrow{C}}_1(k;i)$, if such $k-1$ parts satisfy the difference condition
\begin{equation}\label{condition-special-1}
\lambda_s\leq\lambda_{s+k-2}+2 \text{ with strict inequality if $\lambda_s$ is odd},
\end{equation}
then we say that $\{\lambda_{s+l}\}_{0\leq l\leq k-2}$ is a $(k-1)$-band of $\lambda$.  If the $k-1$ parts in the $(k-1)$-band $\{\lambda_{s+l}\}_{0\leq l\leq k-2}$ also satisfy the congruence
\begin{equation}\label{difference-special-1}
[\lambda_s/2]+\cdots+[\lambda_{s+k-2}/2]\equiv i-1+{O}_\lambda(\lambda_s)\pmod{2},
\end{equation}
then we say that the $(k-1)$-band $\{\lambda_{s+l}\}_{0\leq l\leq k-2}$ is   good. Otherwise, we say that the $(k-1)$-band $\{\lambda_{s+l}\}_{0\leq l\leq k-2}$ is bad.  Then  $\lambda$ is a special partition in $\mathbb{\overleftarrow{C}}_0(k;i)$ if and only if all the $(k-1)$-bands of $\lambda$ are good. With the similar argument in the proof of Lemma \ref{parity k-1 sequence-over}, we can obtain the following lemma.

   \begin{lem}\label{parity k-1 sequence-over-special} 
   Let $\{\lambda_{s+l}\}_{0\leq l\leq k-2}$ and $\{\lambda_{g+l}\}_{0\leq l\leq k-2}$  be two $(k-1)$-bands of a special partition $\lambda\in\mathbb{\overleftarrow{C}}_1(k;i)$. If $\lambda_g>\lambda_s$ and $\lambda_{s+k-2}\geq \lambda_g-4$ with strict inequality if $\lambda_g$ is odd, then $\{\lambda_{s+l}\}_{0\leq l\leq k-2}$ and $\{\lambda_{g+l}\}_{0\leq l\leq k-2}$ are both good or bad.
   \end{lem}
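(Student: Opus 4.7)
The plan is to transfer the proof of Lemma \ref{parity k-1 sequence-over} essentially verbatim to the special-partition setting, since the defining inequalities of $\mathbb{\overleftarrow{C}}_j(k;i)$ are direct analogues of those for $\mathbb{C}_j(k,i)$, and both the floor function and the odd-count $O_\lambda$ treat an overlined part $\overline{2s+1}$ exactly as they treat an ordinary $2s+1$. The conditions (1)--(4) of the definition of $\mathbb{\overleftarrow{C}}_j(k;i)$ are the exact analogues of those for $\mathbb{C}_j(k,i)$, so the same global difference inequality
\[
\lambda_s \geq \lambda_{s+k-1} + 2 \text{ with strict inequality if $\lambda_s$ is even}
\]
holds in the special setting.

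First, from $\lambda_g > \lambda_s$ one infers $s > g$. Writing $s = g+t$, the inequality above forces $1 \leq t \leq k-1$ and produces the same overlap picture between the two bands as in the unoverlined case, with $\lambda_{g+l} = \lambda_{s+l-t}$ for $t \leq l \leq k-2$. Setting
\[
[\lambda_{s+k-2}/2] + \cdots + [\lambda_s/2] \equiv a + O_\lambda(\lambda_s) \pmod{2}
\]
with $a\in\{i-1,i\}$, I would then reduce the target congruence for the $g$-band to
\[
[\lambda_{g+t-1}/2] + \cdots + [\lambda_g/2] - \bigl([\lambda_{s+k-2}/2] + \cdots + [\lambda_{s+k-1-t}/2]\bigr) \equiv O_\lambda(\lambda_g) - O_\lambda(\lambda_s) \pmod{2}.
\]

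The verification splits on the parity of $\lambda_g$, exactly as before. If $\lambda_g$ is even, write $\lambda_g = 2b+4$ and observe that $[\lambda_{g+l}/2] \in \{b+1,b+2\}$ for $0 \leq l \leq t-1$ according to whether $\lambda_{g+l}$ is odd or even, while $[\lambda_{s+l}/2] = b$ for $k-1-t \leq l \leq k-2$; the resulting parity count exactly matches $O_\lambda(\lambda_g) - O_\lambda(\lambda_s)$. If $\lambda_g$ is odd (including the overlined case $\lambda_g = \overline{2b+3}$), the rigidity of the band forces $t = 1$, $\lambda_{s+k-2} \in \{2b,2b+1\}$, and $O_\lambda(\lambda_g) - O_\lambda(\lambda_s) = 1$, so the required congruence follows by direct substitution.

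The main obstacle, and the only point at which specialness intervenes, is confirming that the possible presence of an overlined odd part does not distort either the banding geometry or the odd-count. Since $[\overline{2s+1}/2] = s = [(2s+1)/2]$ and overlined parts are counted by $O_\lambda$, the numerical identities used above are unaffected. What must be checked is that condition (5) in the definition of $\mathbb{\overleftarrow{C}}_j(k;i)$, which requires flanking $1$-marked parts $2s-2$ and $2s+2$ whenever $\overline{2s+1}$ appears, is compatible with any occurrence of $\overline{2s+1}$ inside a $(k-1)$-band arising in the case analysis; since an overlined odd part occupies the same position an ordinary odd part would, this compatibility is routine, and the argument of Lemma \ref{parity k-1 sequence-over} transfers without further change.
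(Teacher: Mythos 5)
Your proposal is correct and matches the paper's treatment: the paper does not write out a separate proof but simply asserts that the argument of Lemma \ref{parity k-1 sequence-over} carries over, which is exactly the transfer you carry out, including the observation that $[\overline{2s+1}/2]$ and $O_\lambda$ treat an overlined part as an ordinary odd part. Your explicit check that condition (5) does not disturb the overlap geometry is a reasonable (and slightly more careful) elaboration of what the paper leaves implicit.
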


Let $\lambda$ be a special partition in $\mathbb{\overleftarrow{C}}_1(k;i)$, assume that there are $N_{k-1}$ $(k-1)$-marked parts in $\overline{GG}(\lambda)$, denoted  $\lambda^{(k-1)}_{1}> \lambda^{(k-1)}_{2}>\cdots >\lambda^{(k-1)}_{N_{k-1}}$. For each $p\in[1,N_{k-1}]$, assume that $\lambda^{(k-1)}_{p}$ is the $s_p$-th part $\lambda_{s_p}$ of $\lambda$.  If $\lambda^{(k-1)}_{p}$ is not a special odd part,  then   there  exist $k-2$ parts $\lambda_g$  in $\lambda$ such that $g>s$ and $\lambda_g\geq \lambda_s-2$ with strict inequality if $\lambda_s$ is an odd part. Then, $\{\lambda_{s+l}\}_{0\leq l\leq k-2}$ is a $(k-1)$-band of $\lambda$. Such a $(k-1)$-band is called the $(k-1)$-band induced by $\lambda^{(k-1)}_{p}$, denoted $\{\lambda^{(k-1)}_{p}\}_{k-1}=\{\lambda^{(k-1)}_{p,1},\lambda^{(k-1)}_{p,2}, \ldots,\lambda^{(k-1)}_{p,k-1}\},$ where $\lambda^{(k-1)}_{p,1}\geq\lambda^{(k-1)}_{p,2}\geq \cdots\geq\lambda^{(k-1)}_{p,k-1}$.

If $\lambda^{(k-1)}_{p}$ is a special odd part, set $\lambda^{(k-1)}_{p}=\overline{2t+1}$, then   there exist $2$-marked, \ldots, $(k-2)$-marked parts $2t$  and a $1$-marked part $2t+2$ in $\overline{GG}(\lambda)$. We define the $(k-1)$-band induced by $\lambda^{(k-1)}_{p}$ as $\{\lambda^{(k-1)}_{p}\}_{k-1}=\{\lambda^{(k-1)}_{p,1},\lambda^{(k-1)}_{p,2}, \ldots,\lambda^{(k-1)}_{p,k-1}\},$ where  $\lambda^{(k-1)}_{p,1}=2t+2$, $\lambda^{(k-1)}_{p,2}=\overline{2t+1}$ and $\lambda^{(k-1)}_{p,3}=\cdots= \lambda^{(k-1)}_{p,k-1}=2t$.

Clearly, the $k-1$ parts in the $(k-1)$-band $\{\lambda^{(k-1)}_{p}\}_{k-1}$ satisfy the difference condition \eqref{condition-special-1}. With the similar argument in the proof of Theorem \ref{parity k-1 sequence}, we  obtain the following theorem, which gives a criterion  to  determine whether a special partition $\lambda$ in $\mathbb{\overleftarrow{C}}_1(k;i)$ also belongs to $\mathbb{\overleftarrow{C}}_0(k;i)$.

\begin{thm}\label{parity k-1 sequence-special} Let $\lambda$ be a special partition in $\mathbb{\overleftarrow{C}}_1(k;i)$ with $N_{k-1}$ $(k-1)$-marked parts in $\overline{GG}(\lambda)$, denoted   $\lambda^{(k-1)}_{1}> \lambda^{(k-1)}_{2}>\cdots >\lambda^{(k-1)}_{N_{k-1}}$. Then $\lambda$ is a special  partition in $\mathbb{\overleftarrow{C}}_0(k;i)$ if and only if $\{\lambda^{(k-1)}_{p}\}_{k-1}$ is  good for $1\leq p\leq N_{k-1}$.
\end{thm}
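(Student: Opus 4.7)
The plan is to mirror the proof of Theorem \ref{parity k-1 sequence}, using Lemma \ref{parity k-1 sequence-over-special} in place of Lemma \ref{parity k-1 sequence-over}. The forward direction is immediate from the definitions: if $\lambda\in\mathbb{\overleftarrow{C}}_0(k;i)$ then every $(k-1)$-band of $\lambda$ satisfies the congruence \eqref{difference-special-1}, so in particular each $\{\lambda^{(k-1)}_{p}\}_{k-1}$ is good, provided that each such band is indeed a $(k-1)$-band in the sense of \eqref{condition-special-1}. For $\lambda^{(k-1)}_p$ not a special odd part this is immediate from the definition of $\overline{GG}(\lambda)$; for $\lambda^{(k-1)}_p=\overline{2t+1}$ one checks directly from the explicit description $\{2t+2,\overline{2t+1},2t,\ldots,2t\}$ that \eqref{condition-special-1} holds.

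For the converse, suppose $\{\lambda^{(k-1)}_{p}\}_{k-1}$ is good for each $1\leq p\leq N_{k-1}$. Take an arbitrary $(k-1)$-band $\{\lambda_{g+l}\}_{0\leq l\leq k-2}$ of $\lambda$; we must show it is good. By the definition of $\overline{GG}(\lambda)$, the $k-1$ parts of the band carry distinct marks, so there exists $l_g$ such that $\lambda_{g+l_g}$ is marked with $k-1$; call this part $\lambda^{(k-1)}_{p_g}$.

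In the generic case where $\lambda^{(k-1)}_{p_g}$ is not a special odd part, the $(k-1)$-band $\{\lambda^{(k-1)}_{p_g}\}_{k-1}$ consists of $k-1$ consecutive parts of $\lambda$, and the estimate
\[\lambda_g-\lambda^{(k-1)}_{p_g,k-1}=(\lambda_g-\lambda^{(k-1)}_{p_g})+(\lambda^{(k-1)}_{p_g}-\lambda^{(k-1)}_{p_g,k-1})\leq 4,\]
with strict inequality if $\lambda_g$ is odd, holds verbatim as in the original proof. An application of Lemma \ref{parity k-1 sequence-over-special} then shows that $\{\lambda_{g+l}\}_{0\leq l\leq k-2}$ and $\{\lambda^{(k-1)}_{p_g}\}_{k-1}$ are simultaneously good or bad, and the hypothesis gives that they are good.

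The main obstacle — and the only genuinely new point compared with Theorem \ref{parity k-1 sequence} — is the case when $\lambda^{(k-1)}_{p_g}=\overline{2t+1}$, because then $\{\lambda^{(k-1)}_{p_g}\}_{k-1}$ is defined in a non-standard way as $\{2t+2,\overline{2t+1},2t,\ldots,2t\}$ rather than as $k-1$ consecutive parts of $\lambda$ in the sequence sense, and Lemma \ref{parity k-1 sequence-over-special} does not literally apply. I would handle this by exploiting condition~(5) in the definition of $\mathbb{\overleftarrow{C}}_j(k;i)$: the presence of a $1$-marked $2t+2$ and (through the marking rules) the $2$-marked through $(k-2)$-marked copies of $2t$ forces the specified $k-1$ parts to occupy a block of consecutive positions of $\lambda$, and any other $(k-1)$-band $\{\lambda_{g+l}\}_{0\leq l\leq k-2}$ containing $\overline{2t+1}$ must satisfy $\lambda_{g+l}\in\{2t,2t+1,2t+2,\overline{2t+1}\}$ for all $l$. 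A direct parity count (tracking $[\lambda_{g+l}/2]$ and $O_\lambda(\lambda_g)$ across the two bands, exactly as in Cases 1--5 of the proof of Lemma \ref{varphi1}) then yields that $\{\lambda_{g+l}\}$ inherits the congruence \eqref{difference-special-1} from $\{\lambda^{(k-1)}_{p_g}\}_{k-1}$. Combining both cases, every $(k-1)$-band of $\lambda$ is good, which places $\lambda$ in $\mathbb{\overleftarrow{C}}_0(k;i)$ and completes the proof.
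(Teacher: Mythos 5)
Your proposal is correct and follows the same route the paper intends, since the paper itself only remarks that Theorem \ref{parity k-1 sequence-special} is obtained ``with the similar argument in the proof of Theorem \ref{parity k-1 sequence}'' via Lemma \ref{parity k-1 sequence-over-special}. The one case you flag as genuinely new in fact collapses: when $\lambda^{(k-1)}_{p_g}=\overline{2t+1}$, conditions (3) and (5) in the definition of $\mathbb{\overleftarrow{C}}_j(k;i)$ force $f_{2t}(\lambda)=k-3$, $f_{2t+2}(\lambda)=1$, $f_{\overline{2t+1}}(\lambda)=1$, so the only $(k-1)$-band containing $\overline{2t+1}$ is the induced band itself and no further parity comparison is needed.
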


\subsection{Proof of Theorem \ref{reduction-1}}

We are now in a position to give a proof a Theorem \ref{reduction-1}.

{\noindent \bf Proof of Theorem \ref{reduction-1}.} Let $\lambda$ be a partition in $\mathbb{\hat{C}}_1(N_1,\ldots,N_{k-1};i|p)$.  Let $\lambda^{(r)}=(\lambda^{(r)}_1,\lambda^{(r)}_2,\ldots,\lambda^{(r)}_{N_r})$ denote the $r$-marked parts in  ${GG}(\lambda)$, where $1\leq r\leq k-1$. We will define $\mu=R_p(\lambda)$ related to $\lambda^{(2)}_1, \lambda^{(2)}_2,\ldots,\lambda^{(2)}_p$, and denote the  intermediate special partitions by $\lambda^{1},\lambda^{2},\ldots,\lambda^{p}$.

\begin{itemize}

\item[] Step 1: We do the following operation related to $\lambda^{(2)}_1$, called the basic operation of $\lambda^{(2)}_1$. There are  five cases.

          \begin{itemize}
              \item[] Case 1: $\hat{\alpha}_1(\lambda)$ is of cluster $\hat{A}_1$. We may write         $\lambda^{(2)}_1=2t_1$. In this case,  $\lambda^{(2)}_1$ is of type  $\hat{A}_1$. So, there is a $1$-marked part $2t_1$ in ${GG}(\lambda)$ and there exist parts $2t_1+2$ in $\lambda$. Let $r_1$ be the smallest mark of parts  $2t_1+2$ in ${GG}(\lambda)$. We have $r_1>2$. Then we replace the $r_1$-marked $2t_1+2$ in ${GG}(\lambda)$ by $2t_1+1$ to get $\lambda^1$.

                  \item[] Case 2: $\hat{\alpha}_1(\lambda)$ is of cluster $\hat{A}_2$. We may write $\lambda^{(2)}_1=2t_1+2$. In this case,  $\lambda^{(2)}_1$ is of type  $\hat{A}_2$. So, there is a $1$-marked part $2t_1+2$ in ${GG}(\lambda)$ and there do not exist parts $2t_1+4$ in $\lambda$. Then we set $r_1=1$ and replace the $1$-marked $2t_1+2$ in ${GG}(\lambda)$ by $2t_1+1$ to obtain $\lambda^1$.

                        \item[] Case 3: $\hat{\alpha}_1(\lambda)$ is of cluster $\hat{A}_3$. We find that $\hat{\alpha}_1(\lambda)\neq \hat{\alpha}_p(\lambda)$ and   $\lambda^{(2)}_1$ is of type  $\hat{A}_2$. Set $\lambda^{(2)}_1=2t_1+2$. We see that there is a $1$-marked part $2t_1+2$ in ${GG}(\lambda)$ and there do not exist parts $2t_1+4$ in $\lambda$.   Then we set $r_1=2$ and replace the $2$-marked $2t_1+2$ in ${GG}(\lambda)$ by $\overline{2t_1+1}$ to obtain $\lambda^1$.

                      \item[] Case 4: $\hat{\alpha}_1(\lambda)$ is of cluster $\hat{B}$. Then, $\lambda^{(2)}_1$ is of type  $\hat{B}$. Set $\lambda^{(2)}_1=2t_1$.  We see that  there exist parts $2t_1+2$ with mark $1$ and  marks  greater than $2$ in ${GG}(\lambda)$. There are  two subcases.

                             \begin{itemize}
                    \item[] Case 4.1: $\hat{\alpha}_1(\lambda)=\hat{\alpha}_p(\lambda)$. Then we set $r_1=1$ and replace the $1$-marked $2t_1+2$ in ${GG}(\lambda)$ by $2t_1+1$ to get $\lambda^1$.

                     \item[] Case 4.2: $\hat{\alpha}_1(\lambda)\neq\hat{\alpha}_p(\lambda)$. Let $r_1$ be the smallest mark except for $1$ of parts  $2t_1+2$ in ${GG}(\lambda)$. We have $r_1>2$. Then,  the $\lambda^1$ is obtained by replacing the $r_1$-marked $2t_1+2$ in ${GG}(\lambda)$ by $\overline{2t_1+1}$.
                             \end{itemize}

                     \item[] Case 5: $\hat{\alpha}_1(\lambda)$ is of cluster $\hat{C}$. We may write $\lambda^{(2)}_1=2t_1+2$. In this case, $\lambda^{(2)}_1$ is of type  $\hat{C}$. So, there is a $1$-marked part $2t_1$   and there do not exist parts $2t_1+4$ with marks greater than $2$ in ${GG}(\lambda)$. Then we set $r_1=2$ and replace the $2$-marked $2t_1+2$ in ${GG}(\lambda)$ by $2t_1+1$ to get $\lambda^1$.

          \end{itemize}
 Obviously, $|\lambda^1|=|\lambda|-1$. We see that $\lambda^{1}$ is obtained by  replacing the $r_1$-marked $2t_1+2$ in ${GG}(\lambda)$ by $2t_1+1$ (resp. $\overline{2t_1+1}$). We proceed to prove that the new generated part $2t_1+1$ (resp. $\overline{2t_1+1}$)  is  marked with $r_1$ in $\overline{GG}(\lambda^{1})$. It suffice to show that there are $2$-marked, \ldots, $(r_1-1)$-marked parts $2t_1$ and there is a $1$-marked $2t_1$ (resp. ${2t_1+2}$) in ${GG}(\lambda)$.

 If $\lambda^{1}$ is obtained by  replacing the $r_1$-marked $2t_1+2$ in ${GG}(\lambda)$ by ${2t_1+1}$, then  $r_1$ is the smallest mark of parts $2t_1+2$  in ${GG}(\lambda)$, and so there are $1$-marked, \ldots, $(r_1-1)$-marked parts $2t_1$ in ${GG}(\lambda)$.

 If $\lambda^{1}$ is obtained by  replacing the $r_1$-marked $2t_1+2$ in ${GG}(\lambda)$ by $\overline{2t_1+1}$, then there is a $1$-marked $2t_1+2$ and  $r_1$ is the smallest mark except for $1$ of parts  $2t_1+2$ in ${GG}(\lambda)$, and so there are $2$-marked, \ldots, $(r_1-1)$-marked $2t_1$ in ${GG}(\lambda)$.

We conclude that  there are $2$-marked, \ldots, $(r_1-1)$-marked $2t_1$ and there is a $1$-marked $2t_1$ (resp. ${2t_1+2}$) in ${GG}(\lambda)$. So, the new generated part $2t_1+1$ (resp. $\overline{2t_1+1}$)   is marked with $r_1$ in $\overline{GG}(\lambda^{1})$. Moreover, the marks of the unchanged parts in $\overline{GG}(\lambda^{1})$ are the same as those in ${GG}(\lambda)$. Hence,   $\lambda^{1}$ is a special partition in $\mathbb{\overleftarrow{C}}_1(N_1,\ldots,N_{k-1};i)$.

   We turn to show that if $\lambda\in\mathbb{{C}}_0(N_1,\ldots,N_{k-1};i)$, then $\lambda^{1}\in\mathbb{\overleftarrow{C}}_0(N_1,\ldots,N_{k-1};i)$. Assume that $\lambda\in\mathbb{{C}}_0(N_1,\ldots,N_{k-1};i)$, then  $\{\lambda^{(k-1)}_s\}_{k-1}$ is  good  for $1\leq s\leq N_{k-1}$.
    Let   $\lambda^{1,(k-1)}=(\lambda^{1,(k-1)}_1,\lambda^{1,(k-1)}_2,\ldots, \lambda^{1,(k-1)}_{N_{k-1}})$ be the $(k-1)$-marked parts in  $\overline{GG}(\lambda^{1})$. From the proof above, we have $\lambda^{1,(2)}_{r_1}=2t_1+1$ (resp. $\overline{2t_1+1}$). 
   For each $s\in[1,N_{k-1}]$, we find that   $\lambda^{1,(k-1)}_s\leq 2t_1+2$. To show that $\{\lambda^{1,(k-1)}_s\}_{k-1}$ is  good, we consider the following four cases.
\begin{itemize}
\item[(1)] $\lambda^{1,(k-1)}_s\leq 2t_1$. By the construction of $\lambda^1$, we have $\{\lambda^{1,(k-1)}_s\}_{k-1}=\{\lambda^{(k-1)}_s\}_{k-1}$ and $O_{\lambda^1}(\lambda^{1,(k-1)}_s)=O_{\lambda}(\lambda^{(k-1)}_s)$. Note that $\{\lambda^{(k-1)}_s\}_{k-1}$ is  good, so $\{\lambda^{1,(k-1)}_s\}_{k-1}$ is  good.

    \item[(2)] $\lambda^{1,(k-1)}_s= 2t_1+1$. In this case, we have $r_1=k-1$ and $\lambda^{(k-1)}_s=2t_1+2$.  Moreover, we derive that  $\lambda^{1,(k-1)}_{s,2}=\cdots=\lambda^{1,(k-1)}_{s,k-1}=\lambda^{(k-1)}_{s,2}=\cdots=\lambda^{(k-1)}_{s,k-1}=2t_1$
     and $O_{\lambda^{1}}(2t_1+1)=O_\lambda(2t+2)+1$. So, we get
     \[
\begin{split}
&[\lambda^{1,(k-1)}_{s,1}/2]+[\lambda^{1,(k-1)}_{s,2}/2]+\cdots+[\lambda^{1,(k-1)}_{s,k-1}/2]\\
&\qquad=t_1f_{2t_1}(\lambda^{1})+t_1f_{2t_1+1}(\lambda^{1})=t_1(k-2)+t_1\\
&\qquad=t_1(k-2)+(t_1+1)-1=t_1f_{2t_1}(\lambda)+(t_1+1)f_{2t_1+2}(\lambda)-1\\
&\qquad=[\lambda^{(k-1)}_{s,1}/2]+[\lambda^{(k-1)}_{s,2}/2]+\cdots+[\lambda^{(k-1)}_{s,k-1}/2]-1\\
&\qquad \equiv i-1+O_\lambda(\lambda^{(k-1)}_{s,1})-1=i-1+O_\lambda(2t_1+2)-1\\
&\qquad \equiv i-1+O_{\lambda^1}(2t_1+1)=i-1+O_{\lambda^1}(\lambda^{1,(k-1)}_{s,1})\pmod2,
\end{split}
\]
which implies that  $\{\lambda^{1,(k-1)}_s\}_{k-1}$ is good.

     \item[(3)] $\lambda^{1,(k-1)}_s= \overline{2t_1+1}$. In this case, we have $r_1=k-1$ and $\lambda^{(k-1)}_s=2t_1+2$.  Moreover, we find that $\lambda^{1,(k-1)}_{s,1}=\lambda^{1,(k-1)}_{s,2}=2t_1+2$, $\lambda^{1,(k-1)}_{s,3}=\cdots=\lambda^{1,(k-1)}_{s,k-1}=\lambda^{(k-1)}_{s,3}=\cdots=\lambda^{(k-1)}_{s,k-1}=2t_1$ and $O_{\lambda^{1}}(2t_1+2)=O_\lambda(2t+2)+1$. We obtain that
     \[
\begin{split}
&[\lambda^{1,(k-1)}_{s,1}/2]+[\lambda^{1,(k-1)}_{s,2}/2]+\cdots+[\lambda^{1,(k-1)}_{s,k-1}/2]\\
&\qquad =t_1f_{2t_1}(\lambda^{1})+t_1f_{\overline{2t_1+1}}(\lambda^{1})+(t_1+1)f_{2t_1+2}(\lambda^{1})=t_1(k-3)+t_1+(t_1+1)\\
&\qquad=t_1(k-3)+2(t_1+1)-1=t_1f_{2t_1}(\lambda)+(t_1+1)f_{2t_1+2}(\lambda)-1\\
&\qquad =[\lambda^{(k-1)}_{s,1}/2]+[\lambda^{(k-1)}_{s,2}/2]+\cdots+[\lambda^{(k-1)}_{s,k-1}/2]-1\\
&\qquad \equiv i-1+O_\lambda(\lambda^{(k-1)}_{s,1})-1=i-1+O_\lambda(2t_1+2)-1\\
&\qquad \equiv i-1+O_{\lambda^1}(2t_1+2)=i-1+O_{\lambda^1}(\lambda^{1,(k-1)}_{s,1})\pmod2.
\end{split}
\]
So, $\{\lambda^{1,(k-1)}_s\}_{k-1}$ is  good.

     \item[(4)] $\lambda^{1,(k-1)}_s= 2t_1+2$. By the construction of $\lambda^{1}$, we find that $\lambda^{(k-1)}_s=2t_1+2$, $f_{2t_1}(\lambda^{1})=f_{2t_1}(\lambda)$, $f_{2t_1+1}(\lambda^{1})+f_{\overline{2t_1+1}}(\lambda^{1})=f_{2t_1+1}(\lambda)+f_{\overline{2t_1+1}}(\lambda)+1$, $f_{2t_1+2}(\lambda^{1})=f_{2t_1+2}(\lambda)-1$ and $O_{\lambda^{1}}(2t_1+2)=O_\lambda(2t+2)+1$, and so
        \[
\begin{split}
&[\lambda^{1,(k-1)}_{s,1}/2]+[\lambda^{1,(k-1)}_{s,2}/2]+\cdots+[\lambda^{1,(k-1)}_{s,k-1}/2]\\
&\qquad =t_1f_{2t_1}(\lambda^{1})+t_1(f_{2t_1+1}(\lambda^{1})+f_{\overline{2t_1+1}}(\lambda^{1}))+(t_1+1)f_{2t_1+2}(\lambda^{1})\\
&\qquad =t_1f_{2t_1}(\lambda)+t_1(f_{2t_1+1}(\lambda)+f_{\overline{2t_1+1}}(\lambda)+1)+(t_1+1)(f_{2t_1+2}(\lambda)-1)\\
&\qquad =t_1f_{2t_1}(\lambda)+t_1(f_{2t_1+1}(\lambda)+f_{\overline{2t_1+1}}(\lambda))+(t_1+1)f_{2t_1+2}(\lambda)-1\\
&\qquad =[\lambda^{(k-1)}_{s,1}/2]+[\lambda^{(k-1)}_{s,2}/2]+\cdots+[\lambda^{(k-1)}_{s,k-1}/2]-1\\
&\qquad \equiv i-1+O_\lambda(\lambda^{(k-1)}_{s,1})-1=i-1+O_\lambda(2t_1+2)-1\\
&\qquad \equiv i-1+O_{\lambda^{1}}(2t_1+2)=i-1+O_{\lambda^{1}}(\lambda^{1,(k-1)}_{s,1})\pmod2.
\end{split}
\]
Therefore, $\{\lambda^{1,(k-1)}_s\}_{k-1}$ is good.

\end{itemize}
Hence, the $(k-1)$-band $\{\lambda^{1,(k-1)}_s\}_{k-1}$ is good for $s\in[1,N_{k-1}]$. It follows from Theorem \ref{parity k-1 sequence-special} that
$\lambda^1$ is a special partition in $\mathbb{\overleftarrow{C}}_0(N_1,\ldots,N_{k-1};i)$.

\item[] Step 2: If $p=1$, then we shall do nothing. If $p\geq2$, then set $b=1$ and repeat the following process.

\begin{itemize}
    \item[(E)]  Replace the $r_{b}$-marked $2t_{b}+1$ (or $\overline{2t_{b}+1}$) in $\overline{GG}(\lambda^{b})$ by $2t_{b}$ and apply the basic operation defined in  Step 1 to $\lambda^{(2)}_{b+1}$    to get $\lambda^{b+1}$.
    
     \item[(F)] replace $b$ by $b+1$. If $b=p$, then we are done. If $b<p$, then go back to (F).
\end{itemize}

    Clearly, for $1\leq b<p$, we have  $|\lambda^{b+1}|=|\lambda^{b}|-2$. Moreover, $\lambda^{b+1}$ is obtained by  replacing the $r_{b}$-marked $2t_{b}+1$ (resp. $\overline{2t_{b}+1}$) in $\overline{GG}(\lambda^{b})$ by $2t_{b}$ and replacing  the $r_{b+1}$-marked $2t_{b+1}+2$   in $\overline{GG}(\lambda^{b})$ by
    $2t_{b+1}+1$ or $\overline{2t_{b+1}+1}$.
      With the similar argument in Step 1, we see that the new generated part $2t_{b+1}+1$ (resp. $\overline{2t_{b+1}+1}$) is marked with $r_{b+1}$ in $\overline{GG}(\lambda^{b+1})$.

    Now, we proceed to show that the new generated part $2t_{b}$ is marked with $r_{b}$ in $\overline{GG}(\lambda^{b+1})$. If there does not exist a $r_{b}$-marked $2t_{b}-2$ in  $\overline{GG}(\lambda^{b})$, then it is obviously right. If there is a $r_{b}$-marked $2t_{b}-2$ in  $\overline{GG}(\lambda^{b})$, then we have $\hat{\alpha}_{b+1}(\lambda)=\hat{\alpha}_{b}(\lambda)$ and $r_{b+1}=r_{b}$. By the construction of $\lambda^{b+1}$, we see that we will replace the $r_{b}$-marked $2t_{b}-2$ in $\overline{GG}(\lambda^{b})$ by $2t_{b}-3$ (resp. $\overline{2t_{b}-3}$). So, the new generated part $2t_{b}$  is  marked with $r_{b}$ in $\overline{GG}(\lambda^{b+1})$. 

Furthermore, the marks of the unchanged parts in $\overline{GG}(\lambda^{b+1})$ are the same as those in $\overline{GG}(\lambda^{b})$. Hence, $\lambda^{b+1}$ is a special partition in $\mathbb{\overleftarrow{C}}_1(N_1,\ldots,N_{k-1};i)$.

   We turn to show that if $\lambda^{b}\in\mathbb{\overleftarrow{C}}_0(N_1,\ldots,N_{k-1};i)$, then $\lambda^{b+1}\in\mathbb{\overleftarrow{C}}_0(N_1,\ldots,N_{k-1};i)$. Assume that $\lambda^{b}\in\mathbb{\overleftarrow{C}}_0(N_1,\ldots,N_{k-1};i)$, let $\lambda^{b,(k-1)}=(\lambda^{b,(k-1)}_1,\ldots, \lambda^{b,(k-1)}_{N_{k-1}})$ and $\lambda^{b+1,(k-1)}=(\lambda^{b+1,(k-1)}_1,\ldots, \lambda^{b+1,(k-1)}_{N_{k-1}})$ be the $(k-1)$-marked parts in $\overline{GG}(\lambda^{b})$ and $\overline{GG}(\lambda^{b+1})$, respectively. Then, $\{\lambda^{b,(k-1)}_s\}_{k-1}$ is good for $1\leq s\leq N_{k-1}$.

   For each $s\in[1,N_{k-1}]$, by the construction of $\lambda^{b+1}$, we find that $\lambda^{b+1,(k-1)}_s\not\in (2t_{b+1}+2,2t_{b})$ and there are no odd parts greater than $2t_{b}$ in $\lambda^{b+1}$.  To show that $\{\lambda^{b+1,(k-1)}_s\}_{k-1}$ is  good, we consider the following six cases.

  \begin{itemize}
  
  \item[(1)] $\lambda^{b+1,(k-1)}_s\leq 2t_{b+1}+2$.  With the similar argument in Step 1, we derive that $\{\lambda^{b+1,(k-1)}_s\}_{k-1}$ is  good. 

  \item[(2)] $\lambda^{b+1,(k-1)}_s= 2t_{b}$, $r_{b}=k-1$ and there is a $(k-1)$-marked $2t_{b}+1$ in $\overline{GG}(\lambda^{b})$. In this case, we have  $\lambda^{b,(k-1)}_s=2t_{b}+1$,  $\lambda^{b,(k-1)}_{s,2}=\cdots=\lambda^{b,(k-1)}_{s,k-1}=\lambda^{b+1,(k-1)}_{s,2}=\cdots=\lambda^{b+1,(k-1)}_{s,k-1}=2t_{b}$ and  $O_{\lambda^{b+1}}(2t_{b})=O_{\lambda^{b}}(2t_{b}+1)$, which implies that
         \[
\begin{split}
&[\lambda^{b+1,(k-1)}_{s,1}/2]+[\lambda^{b+1,(k-1)}_{s,2}/2]+\cdots+[\lambda^{b+1,(k-1)}_{s,k-1}/2]\\
&\qquad=t_{b}f_{2t_{b}}(\lambda^{b+1})=t_{b}(k-1)\\
&\qquad =t_{b}(k-2)+t_{b}=t_{b}f_{2t_{b}}(\lambda^{b})+t_{b}f_{2t_{b}+1}(\lambda^{b})\\
&\qquad =[\lambda^{b,(k-1)}_{s,1}/2]+[\lambda^{b,(k-1)}_{s,2}/2]+\cdots+[\lambda^{b,(k-1)}_{s,k-1}/2]\\
&\qquad \equiv i-1+O_{\lambda^{b}}(\lambda^{b,(k-1)}_{s,1})=i-1+O_{\lambda^{b}}(2t_{b}+1)\\
&\qquad = i-1+O_{\lambda^{b+1}}(2t_{b})=i-1+O_{\lambda^{b+1}}(\lambda^{b+1,(k-1)}_{s,1})\pmod2.
\end{split}
\]
So, $\{\lambda^{b+1,(k-1)}_s\}_{k-1}$ is   good.

      \item[(3)] $\lambda^{b+1,(k-1)}_s= 2t_{b}$, $r_{b}=k-1$ and there is a $(k-1)$-marked $\overline{2t_{b}+1}$ in $\overline{GG}(\lambda^{b})$. In this case, we have $\lambda^{b,(k-1)}_{s,2}=\overline{2t_{b}+1}$, $\lambda^{b,(k-1)}_{s,1}=2t_{b}+2$, $\lambda^{b,(k-1)}_{s,3}=\cdots=\lambda^{b,(k-1)}_{s,k-1}=\lambda^{b+1,(k-1)}_{s,2}=\cdots=\lambda^{b+1,(k-1)}_{s,k-2}=2t_{b}$,        $\lambda^{b+1,(k-1)}_{s,k-1}=2t_{b}-2$ and $O_{\lambda^{b+1}}(2t_{b})=O_{\lambda^{b}}(2t_{b}+2)$. So, we get 
            \[
\begin{split}
&[\lambda^{b+1,(k-1)}_{s,1}/2]+[\lambda^{b+1,(k-1)}_{s,2}/2]+\cdots+[\lambda^{b+1,(k-1)}_{s,k-1}/2]\\
&\qquad =(t_{b}-1)f_{2t_{b}-2}(\lambda^{b+1})+t_{b}f_{2t_{b}}(\lambda^{b+1})=(t_{b}-1)+t_{b}(k-2)\\
&\qquad \equiv t_{b}(k-3)+t_{b}+(t_{b}+1)\\
&\qquad =t_{b}f_{2t_{b}}(\lambda^{b})+t_{b}f_{\overline{2t_{b}+1}}(\lambda^{b})+(t_{b}+1)f_{2t_{b}+2}(\lambda^{b})\\
&\qquad =[\lambda^{b,(k-1)}_{s,1}/2]+[\lambda^{b,(k-1)}_{s,2}/2]+\cdots+[\lambda^{b,(k-1)}_{s,k-1}/2]\\
&\qquad \equiv i-1+O_{\lambda^{b}}(\lambda^{b,(k-1)}_{s,1})=i-1+O_{\lambda^{b}}(2t_{b}+2)\\
&\qquad = i-1+O_{\lambda^{b+1}}(2t_{b})=i-1+O_{\lambda^{b+1}}(\lambda^{b+1,(k-1)}_{s,1})\pmod2,
\end{split}
\]
which implies that $\{\lambda^{b+1,(k-1)}_s\}_{k-1}$ is good.

          \item[(4)] $\lambda^{b+1,(k-1)}_s= 2t_{b}$ and  $r_{b}<k-1$. By the construction of $\lambda^{b+1}$, we find that there is a $r_{b}$-marked $2t_{b}-2$ in  $\overline{GG}(\lambda^{b})$, which is replaced by $2t_{b}-3$ or $\overline{2t_{b}-3}$ in $\lambda^{b+1}$.
            Moreover, we have  $\lambda^{b,(k-1)}_s=2t_{b}$, $f_{2t_{b}-2}(\lambda^{b+1})=f_{2t_{b}-2}(\lambda^{b})-1$, $f_{2t_{b}}(\lambda^{b+1})=f_{2t_{b}}(\lambda^{b})+1$ and $O_{\lambda^{b+1}}(2t_{b})=O_{\lambda^{b}}(2t_{b})+1$. So, we get
         \[
\begin{split}
&[\lambda^{b+1,(k-1)}_{s,1}/2]+[\lambda^{b+1,(k-1)}_{s,2}/2]+\cdots+[\lambda^{b+1,(k-1)}_{s,k-1}/2]\\
&\qquad =(t_{b}-1)f_{2t_{b}-2}(\lambda^{b+1})+t_{b}f_{2t_{b}}(\lambda^{b+1})\\
&\qquad =(t_{b}-1)(f_{2t_{b}-2}(\lambda^{b})-1)+t_{b}(f_{2t_{b}}(\lambda^{b})+1)\\
&\qquad =(t_{b}-1)f_{2t_{b}-2}(\lambda^{b})+t_{b}f_{2t_{b}}(\lambda^{b})+1\\
&\qquad =[\lambda^{b,(k-1)}_{s,1}/2]+[\lambda^{b,(k-1)}_{s,2}/2]+\cdots+[\lambda^{b,(k-1)}_{s,k-1}/2]+1\\
&\qquad \equiv i-1+O_{\lambda^{b}}(\lambda^{b,(k-1)}_{s,1})+1=i-1+O_{\lambda^{b}}(2t_{b})+1\\
&\qquad = i-1+O_{\lambda^{b+1}}(2t_{b})=i-1+O_{\lambda^{b+1}}(\lambda^{b+1,(k-1)}_{s,1})\pmod2.
\end{split}
\]
Therefore, $\{\lambda^{b+1,(k-1)}_s\}_{k-1}$ is good.

   \item[(5)] $\lambda^{b+1,(k-1)}_s= 2t_{b}+2$.  In this case, we have $\lambda^{b,(k-1)}_s=2t_{b}+2$. Moreover, we find that $f_{2t_{b}}(\lambda^{b+1})=f_{2t_{b}}(\lambda^{b})+1$, $f_{2t_{b}+1}(\lambda^{b+1})+f_{\overline{2t_{b}+1}}(\lambda^{b+1})=f_{2t_{b}+1}(\lambda^{b})+f_{\overline{2t_{b}+1}}(\lambda^{b})-1$,  $f_{2t_{b}+2}(\lambda^{b+1})=f_{2t_{b}+2}(\lambda^{b})$ and $O_{\lambda^{b+1}}(2t_{b}+2)=O_{\lambda^{b}}(2t_{b}+2)$. So
        \[
\begin{split}
&[\lambda^{b+1,(k-1)}_{s,1}/2]+[\lambda^{b+1,(k-1)}_{s,2}/2]+\cdots+[\lambda^{b+1,(k-1)}_{s,k-1}/2]\\
&\qquad =t_{b}f_{2t_{b}}(\lambda^{b+1})+t_{b}(f_{2t_{b}+1}(\lambda^{b+1})+f_{\overline{2t_{b}+1}}(\lambda^{b+1}))+(t_{b}+1)f_{2t_{b}+2}(\lambda^{b+1})\\
&\qquad=t_{b}(f_{2t_{b}}(\lambda^{b})+1)+t_{b}(f_{2t_{b}+1}(\lambda^{b})+f_{\overline{2t_{b}+1}}(\lambda^{b})-1)+(t_{b}+1)f_{2t_{b}+2}(\lambda^{b})\\
&\qquad=t_{b}f_{2t_{b}}(\lambda^{b})+t_{b}(f_{2t_{b}+1}(\lambda^{b})+f_{\overline{2t_{b}+1}}(\lambda^{b}))+(t_{b}+1)f_{2t_{b}+2}(\lambda^{b})\\
&\qquad =[\lambda^{b,(k-1)}_{s,1}/2]+[\lambda^{b,(k-1)}_{s,2}/2]+\cdots+[\lambda^{b,(k-1)}_{s,k-1}/2]\\
&\qquad \equiv i-1+O_{\lambda^{b}}(\lambda^{b,(k-1)}_{s,1})+1=i-1+O_{\lambda^{b}}(2t_{b}+2)\\
&\qquad = i-1+O_{\lambda^{b+1}}(2t_{b}+2)=i-1+O_{\lambda^{b+1}}(\lambda^{b+1,(k-1)}_{s,1})\pmod2,
\end{split}
\]
which yields that $\{\lambda^{b+1,(k-1)}_s\}_{k-1}$ is good.

    \item[(6)] $\lambda^{b+1,(k-1)}_s\geq 2t_{b}+4$. By the construction of $\lambda^{b+1}$, we derive that $\{\lambda^{b+1,(k-1)}_s\}_{k-1}=\{\lambda^{b,(k-1)}_s\}_{k-1}$ and $O_{\lambda^{b+1}}(\lambda^{b+1,(k-1)}_s)=O_{\lambda^{b}}(\lambda^{b,(k-1)}_s)$. Note that  $\{\lambda^{b,(k-1)}_s\}_{k-1}$ is good, So $\{\lambda^{b+1,(k-1)}_s\}_{k-1}$ is good.

   \end{itemize}
    Hence, the $(k-1)$-band  $\{\lambda^{b+1,(k-1)}_s\}_{k-1}$ is  good  for $s\in[1,N_{k-1}]$. By Theorem \ref{parity k-1 sequence-special}, we see that
$\lambda^{b+1}$ is a special partition in $\mathbb{\overleftarrow{C}}_0(N_1,\ldots,N_{k-1};i)$.

\end{itemize}

Finally, we set $\mu=\lambda^{p}$. Clearly,  $|\mu|=|\lambda|-2p+1$, $\mu$ is a special partition in $\mathbb{{C}}_1(N_1,\ldots,N_{k-1};i)$ and the largest odd part in $\mu$ is $2t_p+1$, which is marked with $r_p$ in ${GG}(\mu)$. Furthermore, if  $\lambda$ is a partition in $\mathbb{{C}}_0(N_1,\ldots,N_{k-1};i)$, then $\mu$ belongs to $\mathbb{{C}}_0(N_1,\ldots,N_{k-1};i)$. Now, we proceed to show that $2t_p+1$ is a reduction part of $\mu$ and the reduction index of $2t_p+1$ is $p$. Assume that  $\hat{\alpha}_p(\lambda)=\{p_1\ldots,p\}$, we consider the following four cases.
 \begin{itemize}
\item[(1)] $\hat{\alpha}_p(\lambda)$ is of cluster $\hat{A_1}$. Then, we have $r_p>2$ and $\mu^{(2)}_p=2t_p$.

 \item[(2)] $\hat{\alpha}_p(\lambda)$ is of cluster $\hat{A_2}$.  Then, we find that $r_p=1$, $\mu^{(2)}_p=2t_p+2$ and $\check{\beta}_p(\mu)=\{p_1,\ldots,p\}$ is of cluster $\check{A}_2$.

      \item[(3)] $\hat{\alpha}_p(\lambda)$ is of cluster $\hat{B}$. Then, we derive that   $r_p=1$, $\mu^{(2)}_p=2t_p$ and $\check{\beta}_p(\mu)=\{p_1,\ldots,p\}$ is of cluster $\check{B}$.

              \item[(4)] $\hat{\alpha}_p(\lambda)$ is of cluster $\hat{C}$.  Then, we have  $r_p=2$ and $\mu^{(2)}_p=2t_p+1$.

 \end{itemize}

So, we conclude that $2t_p+1$ is a reduction part in $\mu$ with reduction index $p$. Hence $\mu$ is a partition in $\mathbb{\check{C}}_1(N_1,\ldots,N_{k-1};i|p)$. Moreover, if  $\lambda$ is a partition in $\mathbb{\hat{C}}_0(N_1,\ldots,N_{k-1};i|p)$, then $\mu$ belongs to $\mathbb{\check{C}}_0(N_1,\ldots,N_{k-1};i|p)$.

To prove that the reduction $R_p$ is a bijection, we construct the inverse map of the reduction $R_p$, the dilation $H_p$. Let $\mu$ be a partition in $\mathbb{\check{C}}_j(N_1,\ldots,N_{k-1};i|p)$. Let $\mu^{(r)}=(\mu^{(r)}_1,\mu^{(r)}_2,\ldots,\mu^{(r)}_{N_r})$ denote $r$-marked parts in ${GG}(\mu)$, where $1\leq r\leq k-1$. Assume that the $r_p$-marked odd part $2t_p+1$ in ${GG}(\mu)$ is the reduction part of $\mu$.  We will define $\lambda=H_p(\mu)$ related to $\mu^{(2)}_p, \mu^{(2)}_{p-1},\ldots,\mu^{(2)}_1$ successively, and denote the intermediate special partitions by $\mu^{p},\mu^{p-1},\ldots,\mu^{1},\lambda$ with $\mu^{p}=\mu$.

\begin{itemize}
\item[] Step 1: If $p=1$, then go to Step 2 directly. If $p>1$, then set $b=p$ and repeat the following process.

   \begin{itemize}
       \item[(E)]  There is a $r_b$-marked $2t_b+1$ (resp. $\overline{2t_b+1}$) in $\overline{GG}(\mu^{b})$. There are the following two cases.
       
      \begin{itemize}
          \item[]  Case 1: $\check{\beta}_{b}(\mu)=\check{\beta}_{b-1}(\mu)$. In this case, we find that $2t_b+4$ occurs in $\mu$, which also appears $\mu^{b}$. Set $r_{b-1}$ to the largest integer such that $r_{b-1}\leq r_b$ and there is a  $r_{b-1}$-marked $2t_b+4$ in $\overline{GG}(\mu^{b})$.
           Set  $t_{b-1}=t_b+2$, we see that there is a $r_{b-1}$-marked $2t_{b-1}$ in $\overline{GG}(\mu^{b})$. Then $\mu^{b-1}$ is obtained by replacing the $r_b$-marked $2t_b+1$ (resp. $\overline{2t_b+1}$) by $2t_b+2$ and replacing   the $r_{b-1}$-marked $2t_{b-1}$ in $\overline{GG}(\mu^{b})$ by  $2t_{b-1}+1$ (resp. $\overline{2t_{b-1}+1}$).
           
            \item[] Case 2: $\check{\beta}_{b}(\mu)\neq \check{\beta}_{b-1}(\mu)$. We first replace the $r_b$-marked $2t_b+1$ (resp. $\overline{2t_b+1}$) in $\overline{GG}(\mu^{b})$ by $2t_b+2$. For $\mu^{(2)}_{b-1}$, there are five subcases.
            
              \begin{itemize}

\item[] Case 2.1: $\mu^{(2)}_{b-1}$ is of type $\check{A}_1$. We may write  $\mu^{(2)}_{b-1}=2t_{b-1}$. Let $r_{b-1}$ be the largest mark of parts  $2t_{b-1}$ in $\overline{GG}(\mu^{b})$. We have $r_{b-1}>2$. Then replace the $r_{b-1}$-marked $2t_{b-1}$ in  $\overline{GG}(\mu^{b})$ by $2t_{b-1}+1$ to get $\mu^{b-1}$.

\item[] Case 2.2: $\mu^{(2)}_{b-1}$ is of type $\check{A}_2$. We may write  $\mu^{(2)}_{b-1}=2t_{b-1}+2$. Then set $r_{b-1}=1$ and replace the $1$-marked $2t_{b-1}$ in $\overline{GG}(\mu^{b})$ by $2t_{b-1}+1$ to obtain $\mu^{b-1}$.

\item[] Case 2.3: $\mu^{(2)}_{b-1}$ is of type $\check{A}_3$. We may write  $\mu^{(2)}_{b-1}=2t_{b-1}$. Then set $r_{b-1}=2$ and replace the $2$-marked $2t_{b-1}$ in $\overline{GG}(\mu^{b})$ by $\overline{2t_{b-1}+1}$ to get $\mu^{b-1}$.

\item[] Case 2.4: $\mu^{(2)}_{b-1}$ is of type $\check{B}$. We may write $\mu^{(2)}_{b-1}=2t_{b-1}$. Let $r_{b-1}$ be the largest mark of parts  $2t_{b-1}$ in $\overline{GG}(\mu^{b})$. We have $r_{b-1}>2$. Then replace the $r_{b-1}$-marked $2t_{b-1}$ in  $\overline{GG}(\mu^{b})$ by $\overline{2t_{b-1}+1}$ to obtain $\mu^{b-1}$.

\item Case 2.5: $\mu^{(2)}_{b-1}$ is of type $\check{C}$. We may write $\mu^{(2)}_{b-1}=2t_{b-1}$. Then set $r_{b-1}=2$ and replace the $2$-marked $2t_{b-1}$ in $\overline{GG}(\mu^{b})$ by ${2t_{b-1}+1}$ to get $\mu^{b-1}$.

\end{itemize}
           
      \end{itemize}
      
     \item[(F)]  Replace $b$ by $b-1$. If $b=1$, then go the Step 2. If $b>1$, then go back to (E).
   \end{itemize}
   
\item[] Step 2: There is a $r_1$-marked $2t_1+1$ (resp. $\overline{2t_1+1}$) in $\overline{GG}(\mu^{1})$. $\lambda$ is obtained by replacing the  $r_1$-marked $2t_1+1$ (resp. $\overline{2t_1+1}$) in $\overline{GG}(\mu^{1})$ by $2t_1+2$.

\end{itemize}

 It can be verified  that the dilation $H_{p}$ is the inverse map of $R_{p}$.  This completes the proof.  \qed
 
 We conclude this section with an example illustrating the reduction $R_p$. Let $\lambda$ be the partition in \eqref{example-reduction-type-1}. We apply the  reduction $R_8$ to $\lambda$ to get the partition $\mu$ in \eqref{example-reduction-type-2}, that is, $\mu=R_8(\lambda)$. Here we just give the intermediate special partitions $\lambda^1$, $\lambda^2$, \ldots, $\lambda^7$, where the parts in boldface are changed parts. 
 {\footnotesize \[  
\overline{GG}(\lambda^1)=\left[
\begin{array}{cccccccccccccccccccccc}
 &&4&&&&12&&16&&&22&&&&30&&&36&\\
 &2&&6&&10&&14&&&20&&24&&28&&&34&&&38\\
1&&4&&8&&12&&16&&20&&24&&28&&&34&&{\bf 37}&
\end{array}
\right].\]
 }
 \[\downarrow\]
  {\footnotesize \[  
\overline{GG}(\lambda^2)=\left[
\begin{array}{cccccccccccccccccccccc}
 &&4&&&&12&&16&&&22&&&&30&&&36\\
 &2&&6&&10&&14&&&20&&24&&28&&&34&&38\\
1&&4&&8&&12&&16&&20&&24&&28&&{\bf 33}&&{\bf 36}&
\end{array}
\right].\]
 }
  \[\downarrow\]
   {\footnotesize \[  
\overline{GG}(\lambda^3)=\left[
\begin{array}{cccccccccccccccccccccc}
 &&4&&&&12&&16&&&22&&&&{\bf 29}&&&36\\
 &2&&6&&10&&14&&&20&&24&&28&&&34&&38\\
1&&4&&8&&12&&16&&20&&24&&28&&{\bf 32}&&36&
\end{array}
\right].\]
 }
   \[\downarrow\]
   {\footnotesize \[  
\overline{GG}(\lambda^4)=\left[
\begin{array}{cccccccccccccccccccccc}
 &&4&&&&12&&16&&&22&&&&{\bf 28}&&&&36\\
 &2&&6&&10&&14&&&20&&\overline{\bf 23}&&&28&&&34&&38\\
1&&4&&8&&12&&16&&20&&&24&&28&&32&&36&
\end{array}
\right].\]
 } 
   \[\downarrow\]
   {\footnotesize \[  
\overline{GG}(\lambda^5)=\left[
\begin{array}{cccccccccccccccccccccc}
 &&4&&&&12&&16&&&22&&&28&&&&36\\
 &2&&6&&10&&14&&\overline{\bf 19}&&{\bf 22}&&&28&&&34&&38\\
1&&4&&8&&12&&16&&20&&24&&28&&32&&36&
\end{array}
\right].\]
 } 
    \[\downarrow\]
   {\footnotesize \[  
\overline{GG}(\lambda^6)=\left[
\begin{array}{cccccccccccccccccccccc}
 &&4&&&&12&&\overline{\bf 15}&&&&22&&&28&&&&36\\
 &2&&6&&10&&14&&&{\bf 18}&&22&&&28&&&34&&38\\
1&&4&&8&&12&&&16&&20&&24&&28&&32&&36&
\end{array}
\right].\]
 } 
   \[\downarrow\]
   {\footnotesize \[  
\overline{GG}(\lambda^7)=\left[
\begin{array}{cccccccccccccccccccccc}
 &&4&&&&\overline{\bf 11}&&{\bf 14}&&&&&22&&&28&&&&36\\
 &2&&6&&10&&&14&&&18&&22&&&28&&&34&&38\\
1&&4&&8&&&12&&&16&&20&&24&&28&&32&&36&
\end{array}
\right].\]
 } 
 Finally, we get $\mu=\lambda^8$ in \eqref{example-reduction-type-2}. On the contrary, the same process could be run in reverse, which illustrates the dilation $\lambda=H_8(\mu)$.

 \section{Proof of Theorem \ref{equiv-main-thm}}

Assume that  $k\geq 3$, $k\geq i\geq 2$ and $j=0$ or $1$. The goal of this section is to give a proof of Theorem \ref{equiv-main-thm}. To this end, we first establish the following theorem.

  \begin{thm}\label{success-2}
 For $N_1\geq\cdots\geq N_{k-1}\geq0$, $1\leq p\leq N_2$ and $0\leq p_1\leq N_1$, assume that $\lambda\in\mathbb{\hat{C}}_j(N_1,\ldots,N_{k-1};i|p)$, $\hat{R}_{p}(\lambda)=s$ and  $\lambda\in\mathbb{C}^{<}_j(N_1,\ldots,N_{k-1};i|p_1,t_1)$.  Let $\lambda^{(r)}=(\lambda^{(r)}_1,\lambda^{(r)}_2,\ldots,\lambda^{(r)}_{N_r})$ denote $r$-marked parts in ${GG}(\lambda)$, where $1\leq r\leq k-1$. Set $\mu^{1}=I_{p_1,t_1}(\lambda)$ and $\mu^{2}=R_{p}(\lambda)$. Then we have

  \begin{itemize}

  \item[\rm{(1)}] If $2t_1+1< \lambda^{(1)}_{s}-2$, then  $\mu^{1}\in\mathbb{\hat{C}}_j(N_1+1,\ldots,N_{k-1};i|p)$.

    \item[\rm{(2)}] If $2t_1+1=\lambda^{(1)}_{s}-1$ and $\hat{\alpha}_p(\lambda)$ is of cluster $\hat{A}_1$ or $\hat{C}$, then  $\mu^{1}\in\mathbb{\hat{C}}_j(N_1+1,\ldots,N_{k-1};i|p)$.

        \item[\rm{(3)}] If $2t_1+1=\lambda^{(1)}_{s}-1$ and $\hat{\alpha}_p(\lambda)$ is of cluster $\hat{A}_2$ or $\hat{B}$, then  $\mu^{2}\in \mathbb{C}^{<}_j(N_1,\ldots,N_{k-1};i|p_1-1,t_1+1)$.

        \item[\rm{(4)}] If $2t_1+1> \lambda^{(1)}_{s}+2$, then  $\mu^{2}\in \mathbb{C}^{<}_j(N_1,\ldots,N_{k-1};i|p_1,t_1)$ or $\mu^{2}\in\mathbb{C}^{<}_j(N_1,\ldots,N_{k-1};i|p_1\break -1,t_1+1)$.

  \end{itemize}

  \end{thm}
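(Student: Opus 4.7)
The plan is to analyze each of the four cases separately, tracking precisely which parts are altered by $I_{p_1,t_1}$ and $R_p$. I first observe that the comparison between $2t_1+1$ and $\lambda^{(1)}_s$ forces the relative position of $p_1$ and $s$: in Case (1) the inequalities $\lambda^{(1)}_{p_1+1}+2\leq 2t_1+1<\lambda^{(1)}_{p_1}$ combined with $2t_1+1<\lambda^{(1)}_s-2$ imply $p_1\geq s$; in Cases (2) and (3), $\lambda^{(1)}_s=2t_1+2$ pins down $p_1=s$; in Case (4), $\lambda^{(1)}_s<2t_1-1$ forces $p_1<s$. By Theorems \ref{deltagammathmbb-insertion} and \ref{reduction-1}, $\mu^1\in\mathbb{C}_j(N_1+1,\ldots,N_{k-1};i)$ and $\mu^2\in\mathbb{C}_j(N_1,\ldots,N_{k-1};i)$ already hold, so what remains is to verify the structural conditions defining $\mathbb{\hat{C}}_j(\cdot|p)$ in Cases (1)--(2) and $\mathbb{C}^{<}_j(\cdot|p_1',t_1')$ in Cases (3)--(4).

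For Cases (1) and (2), I would inspect the composition $I_{p_1,t_1}=\Theta_{(p_1)}\circ C_{p_1,t_1}$ and note that each constituent alters only $1$-marked parts at positions $1,\ldots,p_1$, by bounded shifts of $\pm1$ or $\pm2$. In Case (1), since $p_1\geq s$ and $2t_1+1$ is at least two below $\lambda^{(1)}_s$, the modifications never reach any part of size $\geq\lambda^{(2)}_p$, so $\mu^{1,(2)}_p=\lambda^{(2)}_p$ and its cluster type in $\mu^1$ is inherited from $\lambda$; in particular, no odd part $\geq\mu^{1,(2)}_p$ is created. In Case (2), where $p_1=s$, the combination $C_{p_1,t_1}$ does touch $\lambda^{(1)}_s$, but the assumption that $\hat{\alpha}_p(\lambda)$ is of cluster $\hat{A}_1$ or $\hat{C}$ is exactly what guarantees the modified part is separated from the part contributing to $\lambda^{(2)}_p$, so $\mu^{1,(2)}_p$ remains of type $\hat{A}$, $\hat{B}$, or $\hat{C}$ and the absence of odd parts $\geq\mu^{1,(2)}_p$ is preserved.

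For Cases (3) and (4), the goal is to exhibit the new insertion slot. In Case (3), with $\lambda^{(1)}_s=2t_1+2$ and cluster $\hat{A}_2$ or $\hat{B}$, the construction of $R_p$ uses $r_p=1$ and replaces an even part at position $s$ by an unbarred odd part of value $\lambda^{(1)}_s-1=2t_1+1$; comparing with the original slot $(p_1,t_1)=(s,t_1)$ yields the new slot $(p_1-1,t_1+1)=(s-1,t_1+1)$, and the $\overrightarrow{\mathbb{C}}$ condition at the first $s-1$ marked parts is inherited from $\lambda\in\overrightarrow{\mathbb{C}}_j(\cdot|p_1)$. In Case (4), where $p_1<s$, the reduction modifies only parts at positions $\geq s$, so the original slot $(p_1,t_1)$ is preserved except possibly when $R_p$ introduces a new $1$-marked part lying in the open interval $(\lambda^{(1)}_{p_1+1},\lambda^{(1)}_{p_1})$, in which case the slot narrows to $(p_1-1,t_1+1)$. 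The main obstacle is the bookkeeping in Case (4): for each of the five cluster types $\hat{A}_1,\hat{A}_2,\hat{A}_3,\hat{B},\hat{C}$ at $p$, one must use the explicit step-by-step description of $R_p$ from the proof of Theorem \ref{reduction-1} to verify both the slot inequality $\mu^{2,(1)}_{p_1'+1}+2\leq 2t_1+1<\mu^{2,(1)}_{p_1'}$ and the even-type condition on $\mu^{2,(1)}_1,\ldots,\mu^{2,(1)}_{p_1'}$, together with an analogous dichotomy in Case (3) to confirm the absence of newly barred odd parts that would violate membership in $\mathbb{C}^{<}_j(\cdot|\cdot)$.
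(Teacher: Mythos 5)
Your overall strategy coincides with the paper's: split on the relative position of $2t_1+1$ and $\lambda^{(1)}_s$, deduce $p_1\geq s$, $p_1=s$, or $p_1<s$ accordingly, and then track the two operations. Those index deductions are correct. However, your justification of Case (1) rests on a false premise. The insertion $I_{p_1,t_1}=\Theta_{(p_1)}\circ C_{p_1,t_1}$ does \emph{not} confine its modifications to parts below $\lambda^{(2)}_p$: after $C_{p_1,t_1}$ makes $\lambda^{(1)}_{p_1}$ odd type, the map $\Theta_{(p_1)}=\Theta_1\Theta_2\cdots\Theta_{p_1}$ propagates that odd type \emph{upward} through positions $p_1-1,p_1-2,\ldots,1$, i.e., through the $p_1$ \emph{largest} $1$-marked parts and the parts attached to them. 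Since in Case (1) one has $p_1\geq s$ and $\lambda^{(1)}_s$ lies within $2$ of $\lambda^{(2)}_p$, the parts adjacent to $\lambda^{(2)}_p$ sit squarely inside the modified zone; in fact $\mu^{1,(2)}_p$ can equal $\lambda^{(2)}_p+2$ rather than $\lambda^{(2)}_p$, and one still has to check that $2t_1+1<\mu^{1,(2)}_p-2$ and that $\mu^{1,(2)}_p$ is of type $\hat{A}$, $\hat{B}$ or $\hat{C}$ in $\mu^1$. Your sentence ``the modifications never reach any part of size $\geq\lambda^{(2)}_p$, so $\mu^{1,(2)}_p=\lambda^{(2)}_p$ and its cluster type is inherited'' therefore does not establish the conclusion; the verification must be redone allowing for the shift. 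The same confusion recurs in Case (2), where the type is not ``preserved'' but genuinely changes (cluster $\hat{A}_1$ yields type $\hat{B}$, cluster $\hat{C}$ yields type $\hat{A}$), and in Case (4), where ``the reduction modifies only parts at positions $\geq s$'' is backwards: $R_p$ touches the $1$-marked parts at positions $\hat{R}_1(\lambda)<\cdots<\hat{R}_p(\lambda)=s$, all of index at most $s$ and hence of size at least about $\lambda^{(1)}_s-2$, which is exactly why the slot endpoint $\lambda^{(1)}_{p_1}$ (with $p_1<s$) can drop to $\lambda^{(1)}_{p_1}-2=2t_1$ and force the slot to move to $(p_1-1,t_1+1)$.

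Beyond these directional errors, Cases (3) and (4) are left at the level of a plan (``one must use the explicit step-by-step description of $R_p$ \ldots to verify''), so the dichotomy in Case (4) — which is the actual content there, resolved in the paper by comparing $2t_1+1$ with $\mu^{2,(1)}_{p_1}\in\{\lambda^{(1)}_{p_1},\lambda^{(1)}_{p_1}-2\}$ — is asserted rather than proved. In short: right skeleton, but the key size-tracking claims that carry Cases (1), (2) and (4) are either incorrect as stated or deferred, so the proposal does not yet constitute a proof.
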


  \pf Let $\mu^{1,(1)}=(\mu^{1,(1)}_1,\ldots,\mu^{1,(1)}_{N_1},\mu^{1,(1)}_{N_1+1})$ denote $1$-marked parts in ${GG}(\mu^{1})$ and let $\mu^{1,(r)}=(\mu^{1,(r)}_1,\ldots,\mu^{1,(r)}_{N_r})$ denote $r$-marked parts in ${GG}(\mu^{1})$, where $2\leq r\leq k-1$.  Let $\mu^{2,(r)}=(\mu^{2,(r)}_1,\ldots,\mu^{2,(r)}_{N_r})$ denote $r$-marked parts in ${GG}(\mu^{2})$ where $1\leq r\leq k-1$.

  (1) If $2t_1+1< \lambda^{(1)}_{s}-2$, then by the construction of $\mu^1$, we find that $p_1\geq s$, $2t_1+1$ is the largest odd part in $\mu^{1}$, and $\mu^{1,(2)}_{p}=\lambda^{(2)}_{p}$ or $\lambda^{(2)}_{p}+2$, and so $2t_1+1< \mu^{1,(2)}_{p}-2$. It implies that there does not exist a $1$-marked $\mu^{1,(2)}_{p}-1$ in $GG(\mu^{1})$ and there do not exist odd parts greater than or equal to $\mu^{1,(2)}_{p}$ in $\mu^{1}$. Moreover, $\mu^{1,(2)}_{p}$ is of type $\hat{A}$ or $\hat{B}$ or $\hat{C}$ in $\mu^{1}$. Hence, $\mu^{1}\in\mathbb{\hat{C}}_j(N_1+1,\ldots,N_{k-1};i|p)$.

  (2) If $2t_1+1=\lambda^{(1)}_{s}-1$ and $\hat{\alpha}_p(\lambda)$ is of cluster $\hat{A}_1$ or $\hat{C}$, then by the construction of $\mu^1$, we derive that $p_1=s$, $2t_1+1$ is the largest odd part in $\mu^{1}$ and $\mu^{1,(2)}_{p}=\lambda^{(2)}_{p}$.  Note that $2t_1+1=\lambda^{(1)}_{s}-1\leq \lambda^{(2)}_{p}-1=\mu^{1,(2)}_{p}-1$, we see that there do not exist odd parts greater than or equal to $\mu^{1,(2)}_{p}$ in $\mu^{1}$. Moreover, $\mu^{1,(2)}_{p}$ is of type $\hat{B}$ (resp. $\hat{A}$)  if  $\hat{\alpha}_p(\lambda)$ is of cluster $\hat{A}_1$ (resp. $\hat{C}$). Hence, $\mu^{1}\in\mathbb{\hat{C}}_j(N_1+1,\ldots,N_{k-1};i|p)$.

(3) If $2t_1+1=\lambda^{(1)}_{s}-1$ and $\hat{\alpha}_p(\lambda)$ is of cluster $\hat{A}_2$ or $\hat{B}$, then we have $p_1=s$ and $\lambda^{(1)}_{p_1}=\lambda^{(1)}_{s}=2t_1+2$. By the construction of $\mu^2$, we see that the $1$-marked $2t_1+2$ in $GG(\lambda)$ is replaced by  $\mu^{2,(1)}_{p_1}=\lambda^{(1)}_{p_1}-1=2t_1+1$, which is the largest odd part in $\mu^2$. Moreover, we derive that $\mu^{2,(1)}_{p_1}+2=2t_1+3$ and $\mu^{2,(1)}_{p_1-1}\geq2t_1+4>2t_1+3$. This implies that $\mu^{2}\in\mathbb{C}^{<}_j(N_1,\ldots,N_{k-1};i|p_1-1,t_1+1)$.

  (4) If $2t_1+1> \lambda^{(1)}_{s}+2$, then we have $p_1< s$.
 By the construction of $\mu^2$, we  replace the $1$-marked $\lambda^{(1)}_{s}$ (resp. a $r$-marked $\lambda^{(1)}_{s}+2$ ($r\geq 2$)) in $GG(\lambda)$ by   $\lambda^{(1)}_{s}-1$ (resp. $\lambda^{(1)}_{s}+1$) in $\mu^2$. We see that the largest odd part of $\mu^2$ is  $\lambda^{(1)}_{s}-1$ (resp. $\lambda^{(1)}_{s}+1$), and so the largest odd part in $\mu^2$ is less than $2t_1+1$. Moreover, we have $2t_1+1\geq \lambda^{(1)}_{p_1+1}+2\geq \mu^{2,(1)}_{p_1+1}+2$ and $\mu^{2,(1)}_{p_1}=\lambda^{(1)}_{p_1}$  or $\lambda^{(1)}_{p_1}-2$. Note that $2t_1+1<\lambda^{(1)}_{p_1}$, then we consider the following two cases.

Case 1: If $2t_1+1<\mu^{2,(1)}_{p_1}$, then it is clear that $\mu^{2}\in\mathbb{C}^{<}_j(N_1,\ldots,N_{k-1};i|p_1,t_1)$.

Case 2: If $2t_1+1>\mu^{2,(1)}_{p_1}$, then we have $\mu^{2,(1)}_{p_1}= \lambda^{(1)}_{p_1}-2$, and so $\lambda^{(1)}_{p_1}-2<2t_1+1<\lambda^{(1)}_{p_1}$. This implies that $\lambda^{(1)}_{p_1}=2t_1+2$ and $\mu^{2,(1)}_{p_1}=2t_1$. Moreover, we have $2t_1+3>\mu^{2,(1)}_{p_1}+2$ and $\mu^{2,(1)}_{p_1-1}\geq 2t_1+4>2t_1+3$. So, $\mu^{2}\in\mathbb{C}^{<}_j(N_1,\ldots,N_{k-1};i|p_1-1,t_1+1)$. 

Thus, we complete the proof. \qed

Now, we are in a position to give a proof of Theorem \ref{equiv-main-thm}.

{\noindent \bf Proof of Theorem \ref{equiv-main-thm}.} Let $(\lambda,\tau,\eta)$ be a triplet in  $\mathbb{F}_j(k,i)$. We define $\pi=\Phi_{k,i}(\lambda,\tau,\eta)$ as follows. Assume that there are $N_r$ $r$-marked parts in $GG(\lambda)$, where $1\leq r\leq k-1$, then we have $\tau\in\mathbb{O}_{N_2}$ and $\eta\in \mathbb{I}_{N_1}$.  Set $\ell_1=\ell(\tau)$, $\ell_2=\ell(\eta)$ and $\ell=\ell_1+\ell_2$.

Note that there do not exist odd parts in $\lambda$, then we  see that $\lambda\in\mathbb{\hat{C}}_j(N_1,\ldots,N_{k-1};i|p)$ for $1\leq p\leq N_2$ and $\lambda\in\mathbb{C}^{<}_j(N_1,\ldots,N_{k-1};i|m)$ for $m\geq N_1$. Let $\lambda^{0}=\lambda$, $\tau^{0}=\tau$ and $\eta^{0}=\eta$. If $\ell\geq 1$, then set $b=0$ and repeat the following process until $b=\ell$.

\begin{itemize}
    \item[(E)]  If $\tau^{b}\neq \emptyset$, then assume that $1-2p_b$ is the smallest part in $\tau^b$, where $1\leq p_b\leq N_2$. Otherwise, if $\tau^b=\emptyset$, then set $p_b=0$. If $\eta^b\neq \emptyset$, then assume that $2m_b+1$ is the smallest part in $\eta^b$, where $m_b\geq N_1+\ell_2-\ell(\eta^b)$. Otherwise, if $\eta^b=\emptyset$, then set $m_b=0$.
    
     If $p_b=0$, then apply the insertion $I_{m_b}$ to $\lambda^b$ to get $\lambda^{b+1}$, that is, $\lambda^{b+1}=I_{m_b}(\lambda^{b})$.
    
    If $m_b=0$, then apply the reduction $R_{p_b}$ to $\lambda^b$ to get $\lambda^{b+1}$, that is,  $\lambda^{b+1}=R_{p_b}(\lambda^{b})$.
    
    If $p_b\neq 0$ and $m_b\neq 0$, then we have $\lambda^b\in\mathbb{\hat{C}}_j(N_{1_b},\ldots,N_{k-1};i|p_b)$ and $\lambda^b\in\mathbb{C}^{<}_j(N_{1_b},\ldots,N_{k-1};i|m_b)$, where $N_{1_b}=N_1+\ell_2-\ell(\eta^b)$. 
    Let $\lambda^{b,(1)}=(\lambda^{b,(1)}_1,\ldots,\lambda^{b,(1)}_{N_{1_b}})$  denote $1$-marked parts in ${GG}(\lambda^{b})$ and  let $\lambda^{b,(r)}=(\lambda^{b,(r)}_1,\ldots,\lambda^{b,(r)}_{N_{r}})$  denote $r$-marked parts in ${GG}(\lambda^{b})$, where   $2\leq r\leq k-1$. Assume that $p'_b$ is the smallest integer such that $2(m_b-p'_b)+1\geq \lambda^{b,(1)}_{p'_b+1}+2$. Set  $t'_b=m_b-p'_b$, then  we have $\lambda^b\in\mathbb{C}^{<}_j(N_1,\ldots,N_{k-1};i|p'_b,t'_b)$. Assume that  $\hat{R}_{p_b}(\lambda^b)=s_b$, there are four cases.
    
     \begin{itemize}

  \item[\rm{(1)}] If $2t'_b+1\leq \lambda^{b,(1)}_{s_b}-2$, then apply the insertion $I_{m_b}$ to $\lambda^b$ to get $\lambda^{b+1}$, that is,    $\lambda^{b+1}=I_{m_b}(\lambda^{b})$.

    \item[\rm{(2)}] If $2t'_b+1=\lambda^{b,(1)}_{s_b}-1$ and $\hat{\alpha}_{p_b}(\lambda^b)$ is of cluster $\hat{A}_1$ or $\hat{C}$, then apply the insertion $I_{m_b}$ to $\lambda^b$ to get $\lambda^{b+1}$, that is, $\lambda^{b+1}=I_{m_b}(\lambda^{b})$.

         \item[\rm{(3)}] If $2t'_b+1=\lambda^{b,(1)}_{s_b}-1$ and $\hat{\alpha}_{p_b}(\lambda^b)$ is of cluster $\hat{A}_2$ or $\hat{B}$, then apply the reduction $R_{p_b}$ to $\lambda^b$ to get $\lambda^{b+1}$, that is, $\lambda^{b+1}=R_{p_b}(\lambda^{b})$.

        \item[\rm{(4)}] If $2t'_b+1> \lambda^{b,(1)}_{s_b}+2$, then we apply the reduction $R_{p_b}$ to $\lambda^b$ to get $\lambda^{b+1}$, that is, $\lambda^{b+1}=R_{p_b}(\lambda^{b})$.

  \end{itemize}
  
  If $\lambda^{b+1}=R_{p_b}(\lambda^{b})$, then set $\eta^{b+1}=\eta^b$ and remove $1-2p_b$ from $\tau^b$ to obtain $\tau^{b+1}$.
  
   If $\lambda^{b+1}=I_{m_b}(\lambda^{b})$, then set $\tau^{b+1}=\tau^b$ and remove $2m_b+1$ from $\eta^b$ to obtain $\eta^{b+1}$.
  
  \item[(F)] Replace $b$ by $b+1$. If $b=\ell$, then we are done. If $b<\ell$, then go back to (E).
    
\end{itemize}
  By Theorem \ref{ssins} and Theorem \ref{success-2}, we see that the process above is well-defined.
    Finally, we set $\pi=\lambda^{\ell}$.  Clearly, $\pi\in\mathbb{C}_j(N_1+\ell_2,\ldots,N_{k-1};i)$ and $|\pi|=|\lambda|+|\tau|+|\eta|$. It implies that  $\pi\in\mathbb{C}_j(k;i)$ and $\ell(\pi)=\ell(\lambda)+\ell(\eta)$.

To show that $\Phi_{k,i}$ is a bijection, we need to give the inverse map $\Psi_{k,i}$ of $\Phi_{k,i}$. Let $\pi$ be a partition in $\mathbb{C}_j(k;i)$. Assume that there are $\ell\geq 0$ odd parts in $\pi$. Let $\pi^{\ell}=\pi$, $\tau^{\ell}=\emptyset$ and $\eta^{\ell}=\emptyset$. If $\ell\geq 1$, then set $b=\ell$ and repeat the following process until $b=0$.

\begin{itemize}
    \item[(E)]  Let $\pi^{b,(r)}=(\pi^{b,(r)}_1,\ldots,\pi^{b,(r)}_{N_{r_b}})$ denote the $r$-marked parts in ${GG}(\pi^{b})$, where $1\leq r\leq k-1$. Assume that  $2t'_b+1$ is  the largest odd part in $\pi^{b}$. There are two cases.
    
    \begin{itemize}

 \item[\rm{(1)}] If $2t'_b+1$ is a reduction part of $\pi^{b}$, then assume that the reduction index of $2t'_b+1$ is $p_b$. We apply the dilation $H_{p_b}$ to $\pi^b$ to get $\pi^{b-1}$, that is, $\pi^{b-1}=H_{p_b}(\pi^{b})$. Then set $\eta^{b-1}=\eta^b$, and add $1-2p_b$ as a new part to $\tau^{b}$ to obtain $\tau^{b-1}$.

     \item[\rm{(2)}] If $2t'_b+1$ is a insertion part of $\pi^{b}$, then assume that $\pi^{b,(1)}_{p'_b+1}=2t'_b+1$. Let $m_b=p'_b+t'_b$. We apply the separation $S_{m_b}$ to $\pi^b$ to get $\pi^{b-1}$, that is, $\pi^{b-1}=S_{m_b}(\pi^{b})$. Then set $\tau^{b-1}=\tau^b$ and add $2m_b+1$ as a new part to $\eta^{b}$ to obtain $\eta^{b-1}$.

\end{itemize}

\item[(F)] Replace $b$ by $b-1$. If $b=0$, then we are done. If $b>0$, then go back to (E).
    
\end{itemize}

Finally, we set $\lambda=\pi^{0}$, $\tau=\tau^0$ and $\eta=\eta^{0}$.  Clearly, $\lambda\in\mathbb{E}_j(k,i)$, $|\pi|=|\lambda|+|\tau|+|\eta|$ and $\ell(\pi)=\ell(\lambda)+\ell(\eta)$. Moreover,  we find that for $0\leq b\leq \ell$, $N_{1_b}=N_{1_\ell}-\ell(\eta^b)$ and $N_{r_b}=N_{r_\ell}$, where $2\leq r\leq k-1$. Setting $N_r=N_{r_0}$, where $1\leq r\leq k-1$,  then we have $\tau\in\mathbb{O}_{N_2}$. Assume that $\eta=(\eta_1,\ldots,\eta_{\ell_2})$, where $\eta_s=1+2m_{b_s}$, where $1\leq s\leq \ell_2$. For $1\leq s< \ell_2$, we see that
\[\begin{split}
&\eta_s=1+2m_{b_s}=1+2p'_{b_s}+2t'_{b_s}\geq 1+2p'_{b_s}+2(p'_{b_{s+1}}+1-p'_{b_s})+2t'_{b_{s+1}}\\
&\quad=1+2(p'_{b_{s+1}}+1)+2t'_{b_{s+1}}>1+2p'_{b_{s+1}}+2t'_{b_{s+1}}=1+2m_{b_{s+1}}=\eta_{s+1}.
\end{split}
\]
Moreover, we have $\eta_{\ell_2}=1+2m_{b_{\ell_2}}\geq 1+2N_{1_{b_{\ell_2}-1}}\geq 1+2N_{1_0}=1+2N_1$. So, $\eta_1>\eta_2>\cdots>\eta_{\ell_2}\geq 1+2N_1$, which implies $\eta\in\mathbb{I}_{N_1}$. Hence, $(\lambda,\tau,\eta)$ is a triplet in  $\mathbb{F}_j(k,i)$. It follows from Theorem \ref{deltagammathmbb-insertion-all} and Theorem \ref{reduction-1} that $\Psi_{k,i}$ is the inverse map of $\Phi_{k,i}$.  Thus, we complete the proof. \qed

\noindent{\bf Acknowledgments.}
This work was supported by
the National Science Foundation of
China  (Nos. 12101437 and 11901430).

\end{document}